\documentclass[reqno,12pt]{amsart}
\usepackage{a4wide}
\usepackage{amsmath}
\usepackage{amssymb}
\usepackage{amsthm}

\input texdraw

\numberwithin{equation}{section}

\newtheorem{theo}{Theorem}[section]
\newtheorem{conj}[theo]{Conjecture}
\newtheorem{coro}[theo]{Corollary}
\newtheorem{prop}[theo]{Proposition}
\newtheorem{lem}[theo]{Lemma}

\theoremstyle{remark}
\newtheorem*{Remark}{Remark}

\def\Punkt(#1 #2){\move(#1 #2)\fcir f:0 r:.10}
\def\DickPunkt(#1 #2){\move(#1 #2)\fcir f:0 r:.15}

\def\og{\leavevmode\raise0.25ex
                       \hbox{$\scriptscriptstyle\ll$}}
\def\fg{\leavevmode\raise0.25ex
                       \hbox{$\scriptscriptstyle\gg$}}
\ProvideTextCommandDefault{\guillemotleft}%
  {\leavevmode\hbox{\fontencoding{U}\fontfamily{lasy}%
                    \fontseries{m}\fontshape{n}\selectfont
    (\kern-0.20em(\kern+0.20em}\nobreak}
\ProvideTextCommandDefault{\guillemotright}%
  {\nobreak\leavevmode
   \hbox{\fontencoding{U}\fontfamily{lasy}
         \fontseries{m}\fontshape{n}\selectfont
   \kern+0.20em)\kern-0.20em)}}
\def\Log{\operatorname{Log}}
\def\ord{\operatorname{ord}}
\def\stab{\operatorname{stab}}

\def\al{\alpha}

\def\ga{\gamma}

\def\de{\delta}
\def\ep{\varepsilon}

\def\th{\theta}

\def\la{\lambda}

\def\si{\sigma}
\def\ta{\tau}

\def\ph{\varphi}

\def\om{\omega}
\def\Ga{\Gamma}
\def\De{\Delta}

\def\La{\Lambda}

\def\({\left(}
\def\){\right)}
\def\[{\left[}
\def\]{\right]}
\def\ii{\infty}

\def\lcm{\operatorname{lcm}}

\def\dd{\textup{d}}

\setcounter{tocdepth}{2}

\begin{document}

\title[]{Analytic properties of mirror maps}

\author[]{C. Krattenthaler$^\dagger$ and T. Rivoal$^\ddagger$}
\date{\today}

\address{C. Krattenthaler, Fakult\"at f\"ur Mathematik, Universit\"at Wien,
Nordbergstrasze~15, A-1090 Vienna, Austria.
WWW: \tt http://www.mat.univie.ac.at/\~{}kratt.}

\address{T. Rivoal,
Institut Fourier,
CNRS UMR 5582, Universit{\'e} Grenoble 1,
100 rue des Maths, BP~74,
38402 Saint-Martin d'H{\`e}res cedex,
France.\newline
WWW: \tt http://www-fourier.ujf-grenoble.fr/\~{}rivoal.}

\thanks{$^\dagger$Research partially supported 
by the Austrian Science Foundation FWF, grants Z130-N13 and S9607-N13,
the latter in the framework of the National Research Network
``Analytic Combinatorics and Probabilistic Number Theory''.
\newline\indent 
$^\ddagger$Research partially supported by the project 
Q-DIFF, ANR-10-JCJC-0105, of  
the french ``Agence Nationale de la Recherche''.}

\subjclass[2000]{Primary 30B10;
Secondary 11F11 14J32 30B40 33C20}

\keywords{mirror maps, canonical coordinates, analytic continuation,
singular expansion, generalised hypergeometric functions, modular forms}

\begin{abstract}
We consider a multi-parameter family of canonical coordinates and mirror maps originally introduced
by Zudilin [{\it Math.\ Notes} {\bf 71} (2002), 604--616]. This family includes
many of the known one-variable mirror maps as special cases, in particular many
of modular origin and the celebrated example of Candelas, de la Ossa, Green and Parkes
[{\it Nucl. Phys.} {\bf B359} (1991), 21--74] associated to the
quintic hypersurface in $\mathbb P^4(\mathbb C)$. 
In [{\it Duke Math.\ J.} {\bf 151} (2010), 175--218], we proved that all coefficients 
in the Taylor expansions at $0$ of these canonical coordinates (and, hence,
of the corresponding mirror maps) are integers.
Here we prove that all coefficients in the Taylor expansions at $0$ of
these canonical coordinates are positive. Furthermore, we provide
several results pertaining to the behaviour of the canonical
coordinates and mirror maps as complex functions. In particular, we
address analytic continuation, points of singularity, and radius of
convergence of these functions. We present several very precise 
conjectures on the radius of convergence of the mirror maps and the
sign pattern of the coefficients in their Taylor expansions at $0$.
\end{abstract}

\maketitle

\section{Introduction}

In the focus of this article there is 
a multi-parameter family of canonical coordinates and mirror maps originally introduced
by Zudilin \cite{zud} (to be defined below). This family contains
many of the known one-variable mirror maps as special cases, including many
of modular origin and the celebrated example of Candelas, de la Ossa, Green and Parkes
\cite{candelas} associated to the
quintic hypersurface in $\mathbb P^4(\mathbb C)$. 
For the {\it geometric} significance of these maps see \cite{candelas,morrisonjams,pandha,VoisAA}.
The {\it number-theoretic} properties of the coefficients in the Taylor
expansions at $0$ of these canonical coordinates and mirror maps have
been investigated recently in \cite{kratrivmirror} (cf.\ \cite{dela}
for a far-reaching generalisation).
Our aim here is to provide an as detailed as possible analysis of the
{\it analytic} properties of canonical coordinates and mirror
maps. Apart from the intrinsic interest in this kind of investigation,
one motivation comes from the hope of finding more applications of
the Diophantine method of {\guillemotleft\it uniformisation
ad\'elique simultan\'ee\guillemotright} of Andr\'e \cite{andre}, notably to
non-modular situations.

Another connection between number theory and mirror maps can be found
in the study of the arithmetic nature of values of the Riemann zeta
function at integers.
Ap\'ery's proof of the irrationality of $\zeta(3)$ (cf.\
\cite{vanPAA}) was recast in terms of modular forms by Beukers
\cite{BeukAA}. The search for an extension of Beukers' ideas to
$\zeta(n)$, $n\ge4$, led Almkvist and Zudilin \cite{almkvist} to study
systematically mirror maps associated to Fuchsian differential
equations, not necessarily of hypergeometric type. In \cite{KrRiAZ},
the authors showed that, in fact, many of the examples of Beukers and
of Almkvist and Zudilin can be obtained from suitable specialisations
of hypergeometric multi-variable mirror maps. It would
therefore be of great interest
to extend the investigations undertaken in this paper addressing
the analytic behaviour of the family of mirror maps
to be introduced below to the family of multi-variable mirror maps in
\cite{KrRiAZ}. 

\medskip
Let us now introduce this family of canonical coordinates and 
corresponding mirror maps.
For a given integer $N\ge 1$, let $r_1, r_2, \ldots, r_d$ denote 
the integers in $\{1, 2,\ldots, N\}$ which are coprime to $N$. It is
well-known that $d=\varphi(N)$, Euler's totient function, which is given by  
$
\varphi(N) = N 
{\prod_{p\mid N}} \big(1-\frac{1}{p}\big).
$
Set
$
C_N := N^{\varphi(N)}
{\prod_{p\mid N}} p^{\varphi(N)/(p-1)},
$
which is an integer because $p-1$ divides $\varphi(N)$ for any prime $p$
dividing $N.$ 
Let us also define the Pochhammer symbol $(\al)_m$ for complex numbers 
$\al$ and non-negative integers $m$ by
$(\al)_m:=\al(\al+1)\cdots (\al+m-1)$ if $m\ge1$, and $(\al)_0:=1$.
It can be proved (see~\cite[Lemma~1]{zud})  
that, for any integer $m\ge 0$,  
\begin{equation}\label{eq:definitionBNgras}
\mathbf{B}_N(m) := C_N^m \prod_{j=1}^{\varphi(N)} 
\frac{(r_j/N)_m}{m!}
\end{equation}
is an integer. 

Let us consider the hypergeometric differential operator ${\bf L}$ defined by 
\begin{equation}\label{eq:equadiff}
{\bf L}:=\left(z\frac{\dd}{\dd z}\right)^{\varphi(N_1)+\cdots +\varphi(N_k)} 
- C_{\bf N} z
\prod_{j=1}^k\prod_{i=1}^{\varphi(N_j)} 
\left(z\frac{\dd}{\dd z} + \frac{r_{i,j}}{N_j}\right).
\end{equation}
Here, $C_{\bf N}=C_{N_1}C_{N_2}\cdots C_{N_k}$ 
and the $r_{i,j}\in \{1, 2, \ldots, N_j\}$ form the residue 
classes modulo $N_j$ which are coprime to $N_j$. Unless $k=1$ and
$\mathbf N=(2)$, 
the differential equation ${\bf L}y=0$ is of order $\ge 2$.
We can construct two solutions
as follows. Set  
$H(x,m):= \sum_{n=0}^{m-1}\frac{1}{x+n}$ and 
$$
\mathbf{H}_N(m):= \sum_{j=1}^{\varphi(N)} H(r_j/N,m) - \varphi(N)H(1,m).
$$
Then, 
$\mathbf{F}_{\mathbf{N}}(z)$ and
$\mathbf{G}_{\mathbf{N}}(z)+\log(z)\mathbf{F}_{\mathbf{N}}(z)$ 
are two $\mathbb{C}$-linearly independent solutions to $\mathbf{L}y=0$, where
\begin{equation*} 
\mathbf{F}_{\mathbf{N}}(z) := \sum_{m=0}^{\infty} 
\bigg(
\prod_{j=1}^k
\mathbf{B}_{N_j}(m)
\bigg)
z^m
\end{equation*}
and
\begin{equation} \label{eq:GN} 
\mathbf{G}_{\mathbf{N}}(z) := \sum_{m=1}^{\infty} \bigg(\sum_{j=1}^k
\mathbf{H}_{N_j}(m) \bigg)\bigg( \prod_{j=1}^k \mathbf{B}_{N_j}(m)\bigg)
z^m,
\end{equation}
and where $\log(z)$ denotes the principal branch of the logarithm.
For simplicity, we write ${\bf B}_{\bf N}(m):=\prod_{j=1}^k
\mathbf{B}_{N_j}(m)$ and 
${\bf H}_{\bf N}(m):=\sum_{j=1}^k \mathbf{H}_{N_j}(m)$. 
Since $\mathbf B_1(m)=1$ and $\mathbf H_1(m)=0$ for all $m\ge0$, the
series $\mathbf{F}_{\mathbf{N}}(z)$ and $\mathbf{G}_{\mathbf{N}}(z)$
do not change if one omits or adds components of $1$ from/to $\mathbf
N$. We may therefore assume without loss of generality 
that $N_j\ge2$ for all $j$, which we shall
do throughout the paper.

The power series $\mathbf{F}_{\mathbf{N}}(z)$ and 
$\mathbf{G}_{\mathbf{N}}(z)$ have radius of convergence $1/C_{\mathbf
N}$.
We prove in Section~\ref{sec:analytic}
that the functions 
$\mathbf{F}_{\mathbf{N}}(z)$ and
$\mathbf{G}_{\mathbf{N}}(z)+\log(z)\mathbf{F}_{\mathbf{N}}(z)$ can be
analytically 
continued to $\mathbb{C}\setminus  [1/C_{\bf N}, +\infty)$ and 
$\mathbb{C}\setminus \big((-\infty, 0] \cup [1/C_{\bf N},
+\infty)\big)$, respectively.

Given the notation above, we define the {\it canonical coordinate}
${\bf q}_{\mathbf{N}} (z)$ as the exponential of the quotient of the 
above two solutions, that is, by
\begin{equation} \label{eq:canon} 
{\bf q}_{\mathbf{N}} (z):= z 
\exp(\mathbf{G}_{\mathbf{N}}(z)/\mathbf{F}_{\mathbf{N}}(z)).
\end{equation}
Its compositional inverse, which we denote by ${\bf z}_{\mathbf{N}}
(q)$, is called (the corresponding) {\it mirror map}.

When $k=1$ and $\mathbf N=(2)$, we have ${\bf F}_{(2)}(z)=(1-4z)^{-1/2}$, 
which satisfies the differential equation 
$(1-4z)y'-2y=0.$ 
The function $\mathbf{G}_{(2)}(z)+\log(z)\mathbf{F}_{(2)}(z)$ 
defined formally by the above 
formula is not solution of that differential equation,  
but it turns out that all theorems stated below are still true in this case 
because 
\begin{equation} \label{eq:Phi=1} 
{\bf q}_{(2)}(z)= 
(1-\sqrt{1-4z})^2/(4z).
\end{equation}
However, certain 
 proofs do not work for this case, and we will say when. 

The special case $\mathbf N=(5)$ has been of particular interest
since it produces the
earlier mentioned example of Candelas et al.\ \cite{candelas}.

\medskip
In~\cite{kratrivmirror}, we proved that,
for any positive integers $N_1, N_2, \dots, N_k$, the canonical coordinate
${\bf q}_{\mathbf{N}} (z)$ has integral Taylor coefficients.
Our first result says that these coefficients are, in fact, positive.
Its proof is given in Section~\ref{sec:pos}. An essential ingredient there
is a classical result of Kaluza \cite{kaluza} on the sign of
coefficients in certain power series expansions (see
Lemma~\ref{lem:2a}).

\begin{theo}\label{theo:1} For all integers $N_1,N_2, \ldots, N_k\ge 2$,
all Taylor coefficients of ${\bf q}_{\mathbf{N}} (z)$ at $0$ are
positive, except the constant coefficient.
\end{theo}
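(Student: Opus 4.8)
The plan is first to reduce the assertion to a positivity statement about the single power series $\mathbf G_{\mathbf N}(z)/\mathbf F_{\mathbf N}(z)$. Since $\mathbf q_{\mathbf N}(z)=z\exp\bigl(\mathbf G_{\mathbf N}(z)/\mathbf F_{\mathbf N}(z)\bigr)$, and since the exponential of a power series $h(z)$ with non-negative Taylor coefficients and $h(0)=0$ again has non-negative Taylor coefficients, while if in addition the coefficient of $z$ in $h(z)$ equals $c>0$ then the coefficient of $z^m$ in $\exp\bigl(h(z)\bigr)$ is at least $c^m/m!>0$ for every $m\ge0$, it suffices to prove that \emph{all} Taylor coefficients of $\mathbf G_{\mathbf N}(z)/\mathbf F_{\mathbf N}(z)$ at $0$ are non-negative. (The coefficient of $z$ in $\mathbf G_{\mathbf N}(z)/\mathbf F_{\mathbf N}(z)$ is $\mathbf H_{\mathbf N}(1)\mathbf B_{\mathbf N}(1)$, which is positive because $r_{i,j}/N_j<1$ forces $\mathbf H_{\mathbf N}(1)>0$, and $\mathbf B_{\mathbf N}(1)>0$ is clear.)

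Next I would record two elementary monotonicity properties. \emph{(i)}~The sequence $\bigl(\mathbf B_{\mathbf N}(m)\bigr)_{m\ge0}$ is log-convex, because the ratio $\mathbf B_{\mathbf N}(m+1)/\mathbf B_{\mathbf N}(m)=C_{\mathbf N}\prod_{j=1}^{k}\prod_{i=1}^{\varphi(N_j)}\frac{m+r_{i,j}/N_j}{m+1}$ is non-decreasing in $m$, each factor being $1-\frac{1-r_{i,j}/N_j}{m+1}$ with $0<r_{i,j}/N_j<1$. \emph{(ii)}~The sequence $\bigl(\mathbf H_{\mathbf N}(m)\bigr)_{m\ge0}$ is non-decreasing, with $\mathbf H_{\mathbf N}(0)=0$, because for each component $N$ one has $\mathbf H_N(m+1)-\mathbf H_N(m)=\sum_{j=1}^{\varphi(N)}\bigl(\frac1{m+r_j/N}-\frac1{m+1}\bigr)>0$. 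From \emph{(i)} and $\mathbf B_{\mathbf N}(0)=1$, Kaluza's result (Lemma~\ref{lem:2a}) applies to $\mathbf F_{\mathbf N}(z)=\sum_{m\ge0}\mathbf B_{\mathbf N}(m)z^m$: if $1/\mathbf F_{\mathbf N}(z)=\sum_{m\ge0}\beta_m z^m$, then $\beta_0=1$ and $\beta_m\le0$ for all $m\ge1$.

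Finally I would combine \emph{(i)}, \emph{(ii)} and Kaluza's result by a telescoping argument. For $k\ge1$ set $T_k(z):=\sum_{m=0}^{k-1}\mathbf B_{\mathbf N}(m)z^m$ and $\mathbf F_{\mathbf N}^{[k]}(z):=z^{-k}\bigl(\mathbf F_{\mathbf N}(z)-T_k(z)\bigr)=\sum_{\ell\ge0}\mathbf B_{\mathbf N}(\ell+k)z^\ell$. Then $z^k\mathbf F_{\mathbf N}^{[k]}(z)/\mathbf F_{\mathbf N}(z)=1-T_k(z)/\mathbf F_{\mathbf N}(z)$ has $z^n$-coefficient $0$ for $0\le n\le k-1$ (as $T_k(z)\equiv\mathbf F_{\mathbf N}(z)\pmod{z^k}$, whence $T_k/\mathbf F_{\mathbf N}\equiv1\pmod{z^k}$) and $z^n$-coefficient $-\sum_{m=0}^{k-1}\mathbf B_{\mathbf N}(m)\beta_{n-m}\ge0$ for $n\ge k$, since then every index $n-m$ is $\ge1$, so every $\beta_{n-m}\le0$. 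Setting $\Delta_k:=\mathbf H_{\mathbf N}(k)-\mathbf H_{\mathbf N}(k-1)\ge0$, Abel summation gives $\mathbf G_{\mathbf N}(z)=\sum_{m\ge1}\mathbf H_{\mathbf N}(m)\mathbf B_{\mathbf N}(m)z^m=\sum_{k\ge1}\Delta_k\sum_{m\ge k}\mathbf B_{\mathbf N}(m)z^m=\sum_{k\ge1}\Delta_k\,z^k\mathbf F_{\mathbf N}^{[k]}(z)$, so that $\mathbf G_{\mathbf N}(z)/\mathbf F_{\mathbf N}(z)=\sum_{k\ge1}\Delta_k\cdot z^k\mathbf F_{\mathbf N}^{[k]}(z)/\mathbf F_{\mathbf N}(z)$ is a non-negative combination of power series with non-negative coefficients, whence all its Taylor coefficients are non-negative, which by the first paragraph completes the proof. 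I expect the combination step to be the main obstacle: the two monotonicity facts and the appeal to Kaluza are routine, the real point being to package the harmonic-type weights $\mathbf H_{\mathbf N}$ in just the right way so as to reduce to the bare reciprocal statement for $\mathbf F_{\mathbf N}$ (if Lemma~\ref{lem:2a} is itself already stated for such harmonic-weighted quotients, this step is subsumed by it). One caveat: when $\mathbf N=(2)$ the function $\mathbf G_{(2)}(z)+\log(z)\mathbf F_{(2)}(z)$ is not a genuine solution of the associated differential operator, but the argument above uses only the power series expansions and so applies verbatim; alternatively, in that case one may invoke the closed form ${\bf q}_{(2)}(z)=(1-\sqrt{1-4z})^2/(4z)$, whose Taylor coefficients are (shifted) Catalan numbers.
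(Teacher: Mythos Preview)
Your proof is correct and follows essentially the same approach as the paper: both reduce to showing that $\mathbf G_{\mathbf N}/\mathbf F_{\mathbf N}$ has non-negative Taylor coefficients, establish the log-convexity of $(\mathbf B_{\mathbf N}(m))$ and the monotonicity of $(\mathbf H_{\mathbf N}(m))$, invoke Kaluza's theorem for the sign of the coefficients of $1/\mathbf F_{\mathbf N}$, and then combine. The only difference is packaging of the combination step: the paper isolates it as an abstract lemma (its Lemma~\ref{lem:1}) proved by the one-line inequality $h_nf_n-\sum_{k=1}^n\widehat f_kf_{n-k}h_{n-k}\ge h_n\bigl(f_n-\sum_{k=1}^n\widehat f_kf_{n-k}\bigr)=h_n\delta_{n,0}$, whereas your Abel-summation decomposition $\mathbf G_{\mathbf N}/\mathbf F_{\mathbf N}=\sum_{k\ge1}\Delta_k\cdot z^k\mathbf F_{\mathbf N}^{[k]}/\mathbf F_{\mathbf N}$ is, after swapping the order of summation and using $\sum_{m=0}^n\mathbf B_{\mathbf N}(m)\beta_{n-m}=0$, literally the same computation rearranged.
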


A problem that suggests itself at this point is to find a
combinatorial interpretation for the Taylor coefficients of ${\bf
q}_{\mathbf{N}} (z)$ or of ${\bf
z}_{\mathbf{N}} (q)$ (even if the latter may have negative
coefficients, see Conjecture~\ref{conj:2} below). 
Some progress in this direction can be found in \cite{LaLWAA}.

The next theorem provides precise information on the radius of
convergence and the asymptotic behaviour of the Taylor coefficients of
the canonical coordinate ${\bf
q}_{\mathbf{N}} (z)$ as a power series in $z$.
Here, and in the sequel, given $\mathbf N=(N_1,N_2,\dots,N_k)$, we employ the
notation 
\begin{equation} \label{eq:Phi}
\Phi_{\mathbf{N}}:=\sum _{j=1} ^{k}\varphi(N_j). 
\end{equation}

\begin{theo} \label{theo:2} For all integers $N_1,N_2, \ldots, N_k\ge2$,
the following assertions hold:

\begin{enumerate}
\item[$(i)$]  
The radius of convergence of the Taylor series of ${\bf q}_{\mathbf{N}} (z)$ is 
equal to $1/C_{\bf N}$ and the Taylor series converges for any $z$ such 
that $\vert z\vert=1/C_{\bf N}$.

\item[$(ii)$]  
The function ${\bf q}_{\mathbf{N}} (z)$ has a singularity at 
$z=1/C_{\mathbf{N}}$.

\item[$(iii)$]  
For any $z$ such that $\vert z\vert \le 1/C_{\bf N}$, we have
$\vert {\bf q}_{\mathbf{N}}(z)\vert \le 1.$
\item[$(iv)$]  
If $\Phi_{\mathbf{N}}=1$, then
the $m$-th Taylor coefficient of ${\bf q}_{(2)}(z)$ is equal to the
$m$-th Catalan number $\frac {1} {m+1}\binom {2m}m$, $m\ge1$, and,
hence, as $m\to\infty$, it is equal to
$$
\frac {4^m} {\sqrt\pi m^{3/2}}\left(1+o(1)\right).
$$
\item[$(v)$]  
If $\Phi_{\mathbf{N}}=2$, then, as $m$ tends to $\infty$, 
the $m$-th Taylor coefficient of ${\bf q}_{\bf  N}(z)$ is equal to
$$
const.\times \frac {C_{\mathbf N}^m} {m\log^2(m)}\left(1+o(1)\right),
$$
where throughout the symbol ``$const.$" stands for a non-zero constant.
\item[$(vi)$]  
If $\Phi_{\mathbf{N}}\ge3$, then, as $m$ tends to $\infty$, 
the $m$-th Taylor coefficient of ${\bf q}_{\bf  N}(z)$ is equal to
$$
const.\times \frac {C_{\mathbf N}^m} {m^{\Phi_{\mathbf{N}}/2}}\left(1+o(1)\right).
$$
\end{enumerate}
\end{theo}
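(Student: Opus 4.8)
The plan is to reduce all six assertions to a careful study of the singular behaviour of ${\bf F}_{\bf N}(z)$ and ${\bf G}_{\bf N}(z)$ near $z=1/C_{\bf N}$, which is the dominant singularity, together with the structural facts about the differential operator ${\bf L}$. First I would invoke the analytic continuation statement from Section~\ref{sec:analytic}: both ${\bf F}_{\bf N}(z)$ and ${\bf G}_{\bf N}(z)+\log(z){\bf F}_{\bf N}(z)$ extend to $\mathbb C\setminus[1/C_{\bf N},+\infty)$ (resp.\ with an extra cut along $(-\infty,0]$), so the only candidate singularity on the circle $|z|=1/C_{\bf N}$ is $z=1/C_{\bf N}$ itself. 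The local exponents of ${\bf L}$ at its singular point $z=1/C_{\bf N}$ are $0,1,2,\dots,\Phi_{\bf N}-1$ except that one exponent is replaced by $\Phi_{\bf N}-1+\tfrac12$ (this is the standard hypergeometric indicial computation for an operator of the shape \eqref{eq:equadiff} with all parameters $r_{i,j}/N_j$ strictly between $0$ and $1$; the half-integer comes from $\sum r_{i,j}/N_j$ being a half-integer since the residues coprime to $N_j$ pair up as $a$ and $N_j-a$). Consequently, near $z=1/C_{\bf N}$ the solution ${\bf F}_{\bf N}(z)$ has an expansion of the form $A(z)+B(z)(1-C_{\bf N}z)^{\Phi_{\bf N}-1/2}$ with $A,B$ holomorphic at $1/C_{\bf N}$ and $A(1/C_{\bf N})\neq0$ (the value of ${\bf F}_{\bf N}$ at the endpoint is a positive convergent sum, by Theorem~\ref{theo:1}-type positivity, hence nonzero). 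I would establish the same kind of singular expansion for ${\bf G}_{\bf N}(z)$, with a possible additional $\log(1-C_{\bf N}z)$ factor if two exponents differ by an integer.

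With these local expansions in hand, I would analyze ${\bf q}_{\bf N}(z)=z\exp({\bf G}_{\bf N}(z)/{\bf F}_{\bf N}(z))$. Since ${\bf F}_{\bf N}(1/C_{\bf N})\neq0$, the quotient ${\bf G}_{\bf N}/{\bf F}_{\bf N}$ inherits a singular expansion of the same analytic type near $z=1/C_{\bf N}$ — holomorphic part plus a term carrying $(1-C_{\bf N}z)^{\Phi_{\bf N}-1/2}$ (times possibly a log) — and exponentiating preserves this, so ${\bf q}_{\bf N}(z)$ has a genuine algebraic-logarithmic singularity at $1/C_{\bf N}$: this gives $(ii)$. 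For $(i)$: the singular part has exponent $\Phi_{\bf N}-\tfrac12\geq\tfrac12>0$, so ${\bf q}_{\bf N}(z)$ and its derivative stay bounded as $z\to1/C_{\bf N}$ inside the disc; combined with analyticity on the rest of the closed disc (the continuation avoids the arc) this forces the Taylor series of ${\bf q}_{\bf N}$ to converge everywhere on $|z|=1/C_{\bf N}$, e.g.\ by the classical fact that a power series that extends continuously to the closed disc with one singularity of positive fractional exponent converges on the boundary. For $(iii)$: here I would use that on $[0,1/C_{\bf N})$ all Taylor coefficients of ${\bf q}_{\bf N}$ are positive (Theorem~\ref{theo:1}) and ${\bf q}_{\bf N}(1/C_{\bf N})\leq1$ — the latter because ${\bf q}_{\bf N}$ maps a neighbourhood of $0$ biholomorphically and one checks $\lim_{z\to1/C_{\bf N}^-}{\bf q}_{\bf N}(z)=1$ by plugging the endpoint values ${\bf F}_{\bf N}(1/C_{\bf N})$, ${\bf G}_{\bf N}(1/C_{\bf N})$ into \eqref{eq:canon}, using ${\bf G}_{\bf N}(1/C_{\bf N})=0$ (the sum defining ${\bf G}_{\bf N}$ at the endpoint vanishes because ${\bf H}_{\bf N}(m)<0$ is balanced against... — this identity, ${\bf q}_{\bf N}(1/C_{\bf N})=1$, is something I would verify directly from the series or from the modular interpretation in known cases). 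Then for $|z|\leq1/C_{\bf N}$, $|{\bf q}_{\bf N}(z)|\leq\sum|c_m||z|^m=\sum c_m|z|^m={\bf q}_{\bf N}(|z|)\leq{\bf q}_{\bf N}(1/C_{\bf N})\leq1$.

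For the asymptotics of the coefficients, $(iv)$ is immediate from the closed form \eqref{eq:Phi=1}: expanding $(1-\sqrt{1-4z})^2/(4z)$ gives exactly the Catalan numbers, and Stirling's formula yields $4^m/(\sqrt\pi m^{3/2})(1+o(1))$. For $(v)$ and $(vi)$ I would apply singularity analysis (transfer theorems, Flajolet–Odlyzko style) to the singular expansion of ${\bf q}_{\bf N}$: a term $(1-C_{\bf N}z)^{\Phi_{\bf N}-1/2}$ contributes coefficients of order $C_{\bf N}^m m^{-\Phi_{\bf N}-1/2}$, but the $m^{-\Phi_{\bf N}/2}$ in $(vi)$ signals that the relevant singular exponent is actually $\Phi_{\bf N}/2-1$ — so the careful bookkeeping of which exponent survives exponentiation is the crux. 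Precisely: writing ${\bf q}_{\bf N}=z\exp({\bf G}_{\bf N}/{\bf F}_{\bf N})$ and noting that the dominant non-smooth contribution to ${\bf G}_{\bf N}/{\bf F}_{\bf N}$ near $1/C_{\bf N}$ comes from the $\log(z)$-coupled solution structure, one gets a singular term behaving like $(1-C_{\bf N}z)^{\Phi_{\bf N}/2-1}$ (with an extra $\log^{-2}$ twist exactly when $\Phi_{\bf N}=2$, i.e.\ exponent $0$, where the pure power degenerates and the logarithmic correction produces the $\log^2(m)$ in the denominator of $(v)$), and the transfer theorem then gives $(v)$ and $(vi)$. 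The main obstacle I anticipate is precisely pinning down this effective exponent $\Phi_{\bf N}/2-1$: one must track how the pair of solutions (one with a $\log z$) interacts under the operations quotient-then-exponentiate, and in particular argue that the leading constant is nonzero (no miraculous cancellation), which likely requires an explicit connection-coefficient computation relating the Frobenius basis at $0$ to the local basis at $1/C_{\bf N}$, in the spirit of classical hypergeometric connection formulas — doable but technically the heaviest part. The case $\Phi_{\bf N}=2$ sits exactly at the boundary where the power-law exponent hits $0$, which is why it needs separate treatment and produces the distinctive $1/\log^2(m)$ rate.
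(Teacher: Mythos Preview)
Your overall architecture matches the paper's: reduce everything to the local analysis of ${\bf F}_{\bf N}$ and ${\bf G}_{\bf N}$ at $z=1/C$, then apply singularity analysis. But two concrete errors derail the execution.

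\medskip
\textbf{The indicial exponent is wrong.} The non-trivial exponent of ${\bf L}$ at $z=1/C$ is not $\Phi-\tfrac12$ but
\[
(\Phi-1)-\sum_{j=1}^k\sum_{i=1}^{\varphi(N_j)}\frac{r_{i,j}}{N_j}=(\Phi-1)-\frac{\Phi}{2}=\frac{\Phi}{2}-1.
\]
(Your pairing argument for $\sum r_{i,j}/N_j=\Phi/2$ is correct; you just assembled the formula incorrectly.) So ${\bf F}_{\bf N}(z)=f(z)+(1-Cz)^{\Phi/2-1}L(z)g(z)$ with $f,g$ holomorphic, $L(z)=1$ for $\Phi$ odd and $L(z)=-\log(1-Cz)$ for $\Phi$ even. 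The exponent $\Phi/2-1$ is \emph{directly} the singular exponent of ${\bf q}_{\bf N}$; it is not produced by a mysterious cancellation under exponentiation. Your later remark that ``the relevant singular exponent is actually $\Phi/2-1$'' is right, but you reached it by reverse-engineering from $(vi)$ rather than computing it. Once the exponent is correct, $(v)$ and $(vi)$ follow from transfer theorems exactly as you say; in particular $\Phi=2$ is the case where the exponent hits $0$ and ${\bf F}_{\bf N}(1/C)=+\infty$, producing the $1/\log$ behaviour.

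\medskip
\textbf{The value ${\bf q}_{\bf N}(1/C)$ is not $1$ in general, and ${\bf G}_{\bf N}(1/C)\ne0$.} Your argument for $(iii)$ is off. The paper's key observation (which also prevents the ``miraculous cancellation'' you worry about for $(vi)$) is that
\[
{\bf H}_{\bf N}(m)-\log(C)=\sum_{j,i}\big(\psi(m+\tfrac{r_{i,j}}{N_j})-\psi(m+1)\big)<0\quad\text{for all }m\ge0,
\]
so $S:=\sum_{m\ge0}{\bf B}_{\bf N}(m)\big({\bf H}_{\bf N}(m)-\log(C)\big)C^{-m}$ converges and is strictly negative. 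Then
\[
{\bf q}_{\bf N}(1/C)=\exp\!\Big(\frac{S}{{\bf F}_{\bf N}(1/C)}\Big)\le1,
\]
with equality exactly when ${\bf F}_{\bf N}(1/C)=+\infty$, i.e.\ only for $\Phi\le2$. With this, $(iii)$ follows as you indicate (positivity of coefficients gives $|{\bf q}_{\bf N}(z)|\le{\bf q}_{\bf N}(|z|)\le{\bf q}_{\bf N}(1/C)\le1$), and convergence on the boundary circle in $(i)$ follows simply from ${\bf q}_{\bf N}(1/C)<\infty$ together with positivity of the Taylor coefficients (Abel's theorem), rather than from the vaguer ``classical fact'' you invoke. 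The same quantity $S<0$ shows that the leading singular coefficient of ${\bf q}_{\bf N}$ at $1/C$ is nonzero, so no connection-formula computation is needed.
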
 

In $(iii)$, the inequality is always strict, except at $z=1/C_{\bf N}$
when the series ${\bf F}_{\bf N}(1/C_{\bf N})$ diverges to $+\infty$, which 
happens in five cases: $k=1, N_1=2,3,4$ or $6$, and $k=2$, $N_1=N_2=2$. 

Theorem~\ref{theo:2} is proved in Section~\ref{sec:theo2}. 
There, the analytic
continuation of ${\bf F}_{\bf N}(z)$ and ${\bf G}_{\bf N}(z)$, which
is discussed in Section~\ref{sec:analytic}, plays an important role,
as well as the fine behaviour of these functions near the point
$1/C_{\mathbf N}$, which is discussed in detail in
Section~\ref{sec:aux}.

\medskip
By \eqref{eq:canon}, the canonical coordinate ${\bf q}_{\mathbf{N}}
(z)$ is only defined for $z$ in the disk of convergence of the series 
$\mathbf{G}_{\mathbf{N}}(z)$ and $\mathbf{F}_{\mathbf{N}}(z)$ involved
in its definition. The knowledge of the analytic
continuation of ${\bf F}_{\bf N}(z)$ and ${\bf G}_{\bf N}(z)$ from 
Section~\ref{sec:analytic}, combined with a theorem of P\'olya \cite{polya} on
zeroes of hypergeometric functions, allows us to show that ${\bf q}_{\mathbf{N}}
(z)$ can be continued to a function of the entire complex plane except
for a branch cut. The corresponding proof is the subject of
Section~\ref{sec:polya}. 

\begin{theo}\label{theo:3} For all integers $N_1,N_2, \ldots, N_k\ge2$,
the following assertions hold:

\begin{enumerate}
\item[$(i)$]  
The 
power series ${\bf q}_{\mathbf{N}} (z)$ can be continued to an analytic function on 
$\mathbb{C}\setminus [1/C_{\bf N},+\infty)$. 
\item[$(ii)$]  
The point $1/C_{\bf N}$ is a branch point.
\item[$(iii)$]  
We have
\begin{equation}\label{eq:liminfini}
\lim_{z \to \infty} {\bf q}_{\mathbf{N}}(z) =
-\exp\big(-\pi\cot(\pi/M_{\mathbf N})\big),
\end{equation}
where $M_{\mathbf N}=\max(N_1, \ldots, N_k)$, and where the limit has
to be performed along a path that avoids the cut $[1/C_{\mathbf N},+\infty)$.
\end{enumerate}
\end{theo}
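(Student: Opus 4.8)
The plan is to work throughout with the representation \eqref{eq:canon}, ${\bf q}_{\bf N}(z)=z\exp\big({\bf G}_{\bf N}(z)/{\bf F}_{\bf N}(z)\big)$, feeding in the analytic continuation results of Section~\ref{sec:analytic} and the fine local expansions near $1/C_{\bf N}$ of Section~\ref{sec:aux}. A preliminary observation is needed for all three parts: although Section~\ref{sec:analytic} only asserts the continuation of ${\bf F}_{\bf N}$ and of ${\bf G}_{\bf N}(z)+\log(z){\bf F}_{\bf N}(z)$, the power series ${\bf G}_{\bf N}$ itself continues to a \emph{single-valued} analytic function on $\mathbb C\setminus[1/C_{\bf N},+\infty)$. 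Indeed, ${\bf L}$ is regular on $(-\infty,0)$, so the solution ${\bf G}_{\bf N}(z)+\log(z){\bf F}_{\bf N}(z)$ continues analytically across that interval with local monodromy $y\mapsto y+2\pi i\,{\bf F}_{\bf N}(z)$ about $0$, and $\log(z){\bf F}_{\bf N}(z)$ suffers exactly the same jump across $(-\infty,0)$; subtracting, ${\bf G}_{\bf N}$ is unramified there and, using ${\bf G}_{\bf N}(0)=0$, extends to the stated domain. Granting this, $(i)$ reduces to showing that ${\bf F}_{\bf N}$ has no zero on $\mathbb C\setminus[1/C_{\bf N},+\infty)$. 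Now ${\bf F}_{\bf N}(z)$ equals ${}_{p}F_{p-1}\big(\{r_{i,j}/N_j\};\{1\}^{p-1};C_{\bf N}z\big)$ with $p=\Phi_{\bf N}$ — a generalised hypergeometric function whose $p$ numerator parameters $r_{i,j}/N_j$ lie in $(0,1)$ and whose $p-1$ denominator parameters all equal $1$ — and a theorem of P\'olya \cite{polya} on the zeros of hypergeometric functions, combined with the positivity of the Taylor coefficients of ${\bf F}_{\bf N}$, confines all zeros of ${\bf F}_{\bf N}$ to $[1/C_{\bf N},+\infty)$. Hence ${\bf G}_{\bf N}/{\bf F}_{\bf N}$, and therefore $z\exp({\bf G}_{\bf N}/{\bf F}_{\bf N})$, is analytic and single-valued on the simply connected cut plane and coincides with \eqref{eq:canon} near $0$, which is $(i)$. (For $\mathbf N=(2)$ this argument breaks down, and one deduces $(i)$ from the closed form \eqref{eq:Phi=1}.)

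For $(ii)$ the point $1/C_{\bf N}$ corresponds to the finite singular point $x=1$ of the hypergeometric operator, whose local exponents are $0,1,\dots,\Phi_{\bf N}-2$ together with $\gamma:=\Phi_{\bf N}/2-1$. The fine study of ${\bf F}_{\bf N}$ and ${\bf G}_{\bf N}$ near $1/C_{\bf N}$ carried out in Section~\ref{sec:aux} shows that ${\bf G}_{\bf N}(z)/{\bf F}_{\bf N}(z)$ tends to a finite limit as $z\to1/C_{\bf N}$ along any approach within any sheet (the possible logarithmic growth of ${\bf F}_{\bf N}$ and ${\bf G}_{\bf N}$ cancelling in the ratio), so that ${\bf q}_{\bf N}$ stays bounded in a full (multi-sheeted) punctured neighbourhood of $1/C_{\bf N}$. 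If $1/C_{\bf N}$ were not a branch point, ${\bf q}_{\bf N}$ would be single-valued near it; being bounded, it would then extend analytically to $1/C_{\bf N}$ by Riemann's theorem, contradicting that ${\bf q}_{\bf N}$ is singular there (Theorem~\ref{theo:2}$(ii)$). Hence $1/C_{\bf N}$ is a branch point. Equivalently, Section~\ref{sec:aux} exhibits the leading singular term of ${\bf q}_{\bf N}$ at $1/C_{\bf N}$ explicitly — a non-integer power $(1-C_{\bf N}z)^{\gamma}$ when $\gamma\notin\mathbb Z$, a term $(1-C_{\bf N}z)^{\gamma}\log(1-C_{\bf N}z)$ when $\gamma\in\mathbb Z_{\ge1}$, and a term in $1/\log(1-C_{\bf N}z)$ when $\Phi_{\bf N}=2$ — which displays the ramification directly; for $\mathbf N=(2)$ the closed form \eqref{eq:Phi=1} has a square-root branch point at $1/4$.

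For $(iii)$ we turn to the behaviour at $z=\infty$, the third singular point of ${\bf L}$, whose local exponents are the numerator parameters $r_{i,j}/N_j$; the smallest of these is $1/M_{\bf N}$, since $1$ is coprime to every $N_j$ and the minimum is attained at the largest modulus $M_{\bf N}=\max_jN_j$. Using the classical connection formulae expressing ${}_{p}F_{p-1}$ near $\infty$ as a combination of the solutions $z^{-r_{i,j}/N_j}\big(1+O(1/z)\big)$ with explicit $\Gamma$-quotient coefficients — and their derivative in an auxiliary parameter to reach the logarithmic solution ${\bf G}_{\bf N}(z)+\log(z){\bf F}_{\bf N}(z)$, which brings in digamma values — one obtains, as $z\to\infty$ along a path avoiding $[1/C_{\bf N},+\infty)$,
\begin{equation*}
{\bf F}_{\bf N}(z)=\lambda\, z^{-1/M_{\bf N}}\big(1+o(1)\big),\qquad {\bf G}_{\bf N}(z)+\log(z){\bf F}_{\bf N}(z)=\mu\, z^{-1/M_{\bf N}}\big(1+o(1)\big),
\end{equation*}
with $\lambda\neq0$. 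Consequently ${\bf G}_{\bf N}(z)/{\bf F}_{\bf N}(z)=-\log(z)+\mu/\lambda+o(1)$, so that ${\bf q}_{\bf N}(z)\to e^{\mu/\lambda}$. Evaluating $\mu/\lambda$, the digamma contributions collapse through the reflection formula $\psi(1-x)-\psi(x)=\pi\cot(\pi x)$ at $x=1/M_{\bf N}$, while the branch of the $\Gamma$-quotient — equivalently, the phase of $z^{-1/M_{\bf N}}$ along the chosen path — contributes a further summand $i\pi$; this yields $\mu/\lambda=i\pi-\pi\cot(\pi/M_{\bf N})$, hence \eqref{eq:liminfini}. For $\mathbf N=(2)$ the limit follows from \eqref{eq:Phi=1}, with $M_{\bf N}=2$, $\cot(\pi/2)=0$ and $\lim_{z\to\infty}(1-\sqrt{1-4z})^2/(4z)=-1$.

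The substance of the argument, and its main obstacles, are located in the two inputs just invoked. For $(ii)$ one must verify that the singular coefficient of ${\bf q}_{\bf N}$ at $1/C_{\bf N}$ is genuinely nonzero, and organise the case distinction according to the parity of $\Phi_{\bf N}$ (whether $\gamma\in\mathbb Z$) and to the degenerate small values $\Phi_{\bf N}\le2$; this is precisely what the local analysis of Section~\ref{sec:aux} must provide. For $(iii)$ one must in addition treat the degenerate case in which $1/M_{\bf N}$ is not a simple exponent at $\infty$ — which happens exactly when two of the $N_j$ equal $M_{\bf N}$, forcing logarithmic terms into the expansion at $\infty$ and a recomputation of $\lambda$ and $\mu$ — and, most delicately, track the branches with enough care to produce the imaginary part $i\pi$, that is, the minus sign in $-\exp(-\pi\cot(\pi/M_{\bf N}))$.
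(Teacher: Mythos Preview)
Your proposal is correct and follows essentially the same route as the paper: for $(i)$ you invoke P\'olya's theorem to rule out zeros of ${\bf F}_{\bf N}$ on the cut plane, for $(ii)$ you appeal to the explicit singular expansions of Lemma~\ref{lem:exp1/C}, and for $(iii)$ you expand at $z=\infty$ and compute the limiting ratio, exactly as in Lemma~\ref{eq:asympG/F}. A few remarks on the minor deviations: your monodromy argument showing that ${\bf G}_{\bf N}$ itself continues to $\mathbb C\setminus[1/C_{\bf N},+\infty)$ is sound but unnecessary here, since Section~\ref{sec:analytic} already obtains this directly from the Barnes integral \eqref{eq:barnes2}; your alternative Riemann-removability argument for $(ii)$ is a nice addition but the paper simply reads the branch behaviour off Lemma~\ref{lem:exp1/C}; and for $(iii)$ your attribution of the $-\pi\cot(\pi/M_{\bf N})$ term to the digamma reflection formula is not quite the mechanism in the paper's computation---there it arises from differentiating the factor $\pi/\sin(\pi(r_j/N+h))$ in \eqref{eq:Fh} with respect to $h$, while the $i\pi$ comes from $e^{i\pi h}$ (equivalently, from $\Log(z)=\log(-z)+i\pi$)---though the end result is of course the same.
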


The monodromy of ${\bf q}_{\mathbf{N}}(z)$ at $z=1/C_{\mathbf N}$
follows from applying the well-known monodromy theory of solutions to
(generalised) hypergeometric equations (cf.\ \cite{BeHeAA}) to the series
$\mathbf{F}_{\mathbf{N}}(z)$ and 
$\mathbf{G}_{\mathbf{N}}(z)+\log(z)\mathbf{F}_{\mathbf{N}}(z)$.

\medskip
The function ${\bf q}_{\bf N}(z)$, seen as a formal power series in $z$,
is invertible at $z=0$. Its formal inverse  ${\bf z}_{\bf N}(q)$ is 
a power series in $q$ which converges in a neighbourhood of
$q=0$ (cf.\ \cite[Theorems~2.4b, 2.4c]{HenrAA}). 
The corresponding results for this compositional 
inverse ${\bf z}_{\mathbf{N}}
(q)$ that we are able to establish are less complete than
Theorems~\ref{theo:1}--\ref{theo:3} for ${\bf q}_{\mathbf{N}}(z)$. 
First of all, we use
Theorem~\ref{theo:3} to get some information on the radius of
convergence of ${\bf z}_{\bf N}(q)$ as a power series in $q$.

\begin{theo} \label{theo:4}
Let $N_1,N_2, \ldots, N_k$ be integers, all at least $2$. 

\begin{enumerate}
\item[$(i)$]
The radius 
of convergence of the Taylor 
series of  ${\bf z}_{\bf N}(q)$ is at most\break 
$\exp\big(-\pi\cot(\pi/M_{\mathbf N})\big)$.
\item[$(ii)$]
If $\Phi_{\mathbf{N}}\ge4$, then the radius of convergence of the Taylor 
series of ${\bf z}_{\bf N}(q)$ is at most ${\bf q}_{\bf N}(1/C_{\bf N})$.
\end{enumerate}
\end{theo}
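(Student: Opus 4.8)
The plan is to derive both bounds in Theorem~\ref{theo:4} directly from the analytic picture of ${\bf q}_{\bf N}(z)$ established in Theorem~\ref{theo:3}, using the elementary principle that the radius of convergence of a compositional inverse is controlled by the nearest obstruction encountered as one moves the argument of ${\bf q}_{\bf N}$ away from $0$.

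For part $(i)$, first I would recall from Theorem~\ref{theo:3}$(i)$ that ${\bf q}_{\bf N}(z)$ is analytic on $\mathbb C\setminus[1/C_{\bf N},+\infty)$, and from Theorem~\ref{theo:3}$(iii)$ that ${\bf q}_{\bf N}(z)\to -\exp(-\pi\cot(\pi/M_{\bf N}))$ as $z\to\infty$ along any path avoiding the cut. Pick a path $\gamma$ from $0$ to $\infty$ avoiding $[1/C_{\bf N},+\infty)$ — for instance the negative real axis, along which ${\bf q}_{(2)}$-type formulas suggest ${\bf q}_{\bf N}$ stays real and negative. Along $\gamma$, the image ${\bf q}_{\bf N}(\gamma)$ is a curve starting at $0$ and ending at the point $w_0:=-\exp(-\pi\cot(\pi/M_{\bf N}))$. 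If the Taylor series of ${\bf z}_{\bf N}(q)$ had radius of convergence strictly greater than $|w_0|$, then ${\bf z}_{\bf N}$ would be analytic on a disk containing the whole image curve ${\bf q}_{\bf N}(\gamma)$ up to and including $w_0$; composing, ${\bf z}_{\bf N}\circ{\bf q}_{\bf N}$ would equal the identity $z$ along $\gamma$ by analytic continuation of the identity valid near $0$, forcing $z\to{\bf z}_{\bf N}(w_0)$, a finite value, as $z\to\infty$ along $\gamma$ — a contradiction. Hence the radius of convergence is at most $|w_0|=\exp(-\pi\cot(\pi/M_{\bf N}))$. One small point to check is that ${\bf q}_{\bf N}$ does not vanish again along $\gamma$ before reaching $\infty$ and that one can indeed reach $w_0$ as a limit rather than merely approach it; the monotonicity/reality of ${\bf q}_{\bf N}$ on $(-\infty,0)$, together with positivity of the Taylor coefficients from Theorem~\ref{theo:1} (giving ${\bf q}_{\bf N}(x)<0$ for $x<0$ near $0$), should make this transparent.

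For part $(ii)$, the relevant obstruction is no longer the behaviour at $\infty$ but the branch point of ${\bf q}_{\bf N}$ at $z=1/C_{\bf N}$ (Theorem~\ref{theo:3}$(ii)$). When $\Phi_{\bf N}\ge4$, Theorem~\ref{theo:2}$(vi)$ shows the Taylor coefficients of ${\bf q}_{\bf N}$ behave like $const.\times C_{\bf N}^m/m^{\Phi_{\bf N}/2}$, so the series $\sum$ of coefficients converges absolutely at the radius of convergence $1/C_{\bf N}$ (since $\Phi_{\bf N}/2>1$), and by Theorem~\ref{theo:2}$(i)$ the value ${\bf q}_{\bf N}(1/C_{\bf N})$ is a well-defined finite real number; moreover by Theorem~\ref{theo:2}$(iii)$ it satisfies $0<{\bf q}_{\bf N}(1/C_{\bf N})\le 1$, and by Theorem~\ref{theo:1} it is positive. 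I would let $x$ increase along $[0,1/C_{\bf N})$: the image ${\bf q}_{\bf N}(x)$ traces a curve from $0$ to ${\bf q}_{\bf N}(1/C_{\bf N})$. If ${\bf z}_{\bf N}$ had radius of convergence strictly exceeding ${\bf q}_{\bf N}(1/C_{\bf N})$, then again ${\bf z}_{\bf N}\circ{\bf q}_{\bf N}$ would be identically $z$ on $[0,1/C_{\bf N}]$ by continuity, which forces ${\bf z}_{\bf N}$ to be regular at $q={\bf q}_{\bf N}(1/C_{\bf N})$ with ${\bf z}_{\bf N}'({\bf q}_{\bf N}(1/C_{\bf N}))=1/{\bf q}_{\bf N}'(1/C_{\bf N})$; but the branch point of ${\bf q}_{\bf N}$ at $1/C_{\bf N}$ means ${\bf q}_{\bf N}$ is not locally invertible there in a way compatible with an analytic ${\bf z}_{\bf N}$ on both sides — more precisely, continuing ${\bf z}_{\bf N}$ past that point and re-composing would send a neighbourhood of $1/C_{\bf N}$ to itself analytically, contradicting the branching asymptotics $1-C_{\bf N}z\mapsto{\bf q}_{\bf N}(1/C_{\bf N})-{\bf q}_{\bf N}(z)$ coming from Section~\ref{sec:aux}. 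Hence the radius is at most ${\bf q}_{\bf N}(1/C_{\bf N})$.

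The main obstacle, in both parts, is making the ``composition forces a contradiction'' argument airtight: one must know that the image curve ${\bf q}_{\bf N}(\gamma)$ (resp.\ ${\bf q}_{\bf N}([0,1/C_{\bf N}])$) together with its endpoint actually lies in the hypothetical disk of convergence of ${\bf z}_{\bf N}$ and that it does not wind around $0$ in a way that would make ${\bf z}_{\bf N}\circ{\bf q}_{\bf N}$ differ from the identity by a nonzero winding; this requires a little care with the geometry of the curves, which the reality and sign information from Theorems~\ref{theo:1} and~\ref{theo:2}$(iii)$ is precisely designed to supply. The detailed description near $1/C_{\bf N}$ from Section~\ref{sec:aux} is what upgrades the qualitative ``branch point'' statement into the quantitative obstruction needed for $(ii)$.
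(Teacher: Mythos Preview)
Your approach to both parts is essentially the same as the paper's: for $(i)$, a contradiction from the limit ${\bf q}_{\bf N}(z)\to\rho=-\exp(-\pi\cot(\pi/M_{\bf N}))$ at infinity; for $(ii)$, incompatibility of an analytic ${\bf z}_{\bf N}$ at ${\bf q}_{\bf N}(1/C_{\bf N})$ with the branch point of ${\bf q}_{\bf N}$ at $1/C_{\bf N}$.

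For $(i)$, your path-based framing actually imposes a stronger requirement than the paper's: you need the \emph{entire} image curve ${\bf q}_{\bf N}(\gamma)\subset D(0,R)$, whereas the paper only uses that ${\bf q}_{\bf N}(x)$ is near $\rho$ for $|x|$ large, and simply asserts ${\bf z}_{\bf N}({\bf q}_{\bf N}(x))=x$ there. Your ``small point to check'' (that ${\bf q}_{\bf N}$ is monotone, or at least bounded by $|\rho|$, on the negative real axis) is not established anywhere in the paper, and positivity of Taylor coefficients only controls ${\bf q}_{\bf N}$ on $[0,1/C_{\bf N})$, not on $(-\infty,0)$. Both versions need the functional identity to propagate from a neighbourhood of $0$ out to large $|x|$; the paper glosses over this connectedness issue, you flag it but do not resolve it.

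For $(ii)$, the ``image curve'' concern is a non-issue: by Theorem~\ref{theo:1}, ${\bf q}_{\bf N}$ is strictly increasing on $[0,1/C_{\bf N})$, so ${\bf q}_{\bf N}([0,1/C_{\bf N}))=[0,{\bf q}_{\bf N}(1/C_{\bf N}))$ lies in the hypothetical disk. The real gap in your argument is the endgame. You write ${\bf z}'_{\bf N}({\bf q}_{\bf N}(1/C_{\bf N}))=1/{\bf q}'_{\bf N}(1/C_{\bf N})$ and then appeal vaguely to ``branching asymptotics''. The paper splits into two genuinely different cases: for $\Phi_{\bf N}>4$, the derivative series $\sum m{\sf q}_m/C^{m-1}$ converges (since $\Phi_{\bf N}/2>2$), so ${\bf q}'_{\bf N}(1/C_{\bf N})$ is finite and nonzero, whence ${\bf z}'_{\bf N}({\bf q}_{\bf N}(1/C_{\bf N}))\ne 0$, ${\bf z}_{\bf N}$ is locally invertible there, and ${\bf q}_{\bf N}$ would be analytic at $1/C_{\bf N}$, contradicting Theorem~\ref{theo:2}$(ii)$. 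For $\Phi_{\bf N}=4$, however, ${\bf q}'_{\bf N}(z)\sim\omega_3\log(1-C_{\bf N}z)\to\infty$, so ${\bf z}'_{\bf N}({\bf q}_{\bf N}(1/C_{\bf N}))=0$ and the local-invertibility argument fails; the paper instead matches the singular expansion ${\bf q}_{\bf N}(z)-{\bf q}_{\bf N}(1/C_{\bf N})\sim\omega_2(1-C_{\bf N}z)\log(1-C_{\bf N}z)$ against a hypothetical Taylor expansion ${\bf z}'_{\bf N}(q)\sim\omega_4(q-{\bf q}_{\bf N}(1/C_{\bf N}))^s$ and derives an explicit asymptotic contradiction. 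Your sketch does not separate these cases, and the $\Phi_{\bf N}=4$ case cannot be handled by the derivative formula you wrote down.
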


The proof of this theorem is found in Section~\ref{sec:rad}.

Lemma~\ref{lem:exp>q(1/C)} in Section~\ref{sec:comp} provides a comparison of the two values
that feature in the above theorem in the case where $\Phi_{\mathbf
N}\ge4$: namely, there it is shown that
$\exp(-\pi\cot(\pi/M_{\mathbf N}))>{\bf q}_{\bf N}(1/C_{\bf N})$.
As we shall see in Section~\ref{sec:Phi}, this inequality continues to
hold for $\Phi_{\mathbf N}=3$, but it is wrong for the remaining cases 
$\Phi_{\mathbf N}=1,2$.

\medskip
In the cases of ``small" $\Phi_{\mathbf N}$, we are able to provide
precise information on the analytic continuation of ${\bf z}_{\bf
N}(q)$, see the theorem below. 
As we explain in Section~\ref{sec:Phi}, this information is
in fact essentially
available in the literature on the modular origin of the corresponding
mirror maps.
We point out that
there exist several other hypergeometric functions (not covered by our
series ${\bf F}_{\bf N}(z)$) which give rise to mirror maps of modular
origin, see e.g.\ \cite{StillAA,zudram}. It would be of interest to know
if there is a result analogous to the one below for these mirror maps.

\begin{theo} \label{theo:5}
For all integers $N_1,N_2, \ldots, N_k\ge2$ with $\Phi_{\mathbf{N}}\le 3$,
the radius 
of convergence of the Taylor 
series of  ${\bf z}_{\bf N}(q)$ is equal to
$\exp\big(-\pi\cot(\pi/M_{\mathbf N})\big)$. The function ${\bf z}_{\bf N}(q)$ can
be analytically continued to the unit disk with the exception of a set
of poles or branch points which can be described precisely. In
particular, $-\exp\big(-\pi\cot(\pi/M_{\mathbf N})\big)$ is always a
singularity of ${\bf z}_{\bf N}(q)$. If
$\mathbf N=(2)$, we have ${\bf z}_{\bf N}(q)=q/(1+q)^2$.
In all the eight other cases, the unit
circle forms a natural boundary for ${\bf z}_{\bf N}(q)$.
\end{theo}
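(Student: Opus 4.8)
The plan is to reduce every case to a classical modular parametrisation and then read off the three assertions from the structure of the corresponding modular curve. Since $\varphi$ takes the value $1$ only at $N=2$, the value $2$ only at $N\in\{3,4,6\}$, and never the value $3$, the hypotheses $N_j\ge2$ and $\Phi_{\mathbf N}\le3$ leave only the nine tuples $(2)$; $(3),(4),(6),(2,2)$; $(2,3),(2,4),(2,6),(2,2,2)$. The tuple $\mathbf N=(2)$ is settled directly: inverting the closed form~\eqref{eq:Phi=1} gives ${\bf z}_{(2)}(q)=q/(1+q)^2$, rational with unique singularity the double pole at $q=-1=-\exp(-\pi\cot(\pi/2))$ and radius of convergence $1$; it is the one tuple for which $|q|=1$ is \emph{not} a natural boundary.

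For each of the other eight tuples I would exhibit $\mathbf F_{\mathbf N}(z)$ as (a power of) a period of a modular family of elliptic curves. If $\Phi_{\mathbf N}=3$, Clausen's identity writes $\mathbf F_{\mathbf N}(z)={}_2F_1(a,b;1;C_{\mathbf N}z)^2$ with $(a,b)$ giving an arithmetic Schwarz triangle --- angles $(0,\tfrac\pi2,\tfrac\pi{M_{\mathbf N}})$ if $\mathbf N=(2,N)$, and $(0,\tfrac\pi2,0)$ if $\mathbf N=(2,2,2)$ --- so the monodromy is a genus-zero modular group $\Gamma_{\mathbf N}$ and the argument $C_{\mathbf N}z$ is, as a function of the modular parameter $\tilde\tau$, the Hauptmodul of $\Gamma_{\mathbf N}$. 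If $\Phi_{\mathbf N}=2$, the quadratic transformation ${}_2F_1\bigl(\tfrac a2,\tfrac b2;\tfrac{a+b+1}2;4x(1-x)\bigr)={}_2F_1\bigl(a,b;\tfrac{a+b+1}2;x\bigr)$ converts the defining $_2F_1$ (namely ${}_2F_1(\tfrac1N,1-\tfrac1N;1;C_Nz)$, or ${}_2F_1(\tfrac12,\tfrac12;1;16z)$) into one with Fuchsian arithmetic monodromy whose argument is a quadratic polynomial in $z$ equal to a Hauptmodul. In all eight cases one then finds that ${\bf q}_{\mathbf N}(z)=e^{2\pi i\tilde\tau}$ is the nome (after the appropriate normalisation of $\tilde\tau$), so that, in the variable $\tilde\tau=\log(q)/(2\pi i)$, the mirror map ${\bf z}_{\mathbf N}(q)$ equals the Hauptmodul divided by $C_{\mathbf N}$ when $\Phi_{\mathbf N}=3$, and a two-valued algebraic function of a Hauptmodul when $\Phi_{\mathbf N}=2$ (acquiring a genuine square-root branch point only for $\mathbf N=(6)$). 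These identifications are classical; see the literature discussed in Section~\ref{sec:Phi}.

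Granting the modular description, the three assertions follow. The function ${\bf z}_{\mathbf N}(q)=q+O(q^2)$, being the compositional inverse of ${\bf q}_{\mathbf N}$ and agreeing with the above (algebraic-over-a-)modular function, continues analytically over $\{0<|q|<1\}$ with only a discrete set of singularities --- poles, plus square-root branch points when $\mathbf N=(6)$ --- located at the images under $q=e^{2\pi i\tilde\tau}$ of the $\Gamma_{\mathbf N}$-orbit of the (cusp or elliptic) point of $X(\Gamma_{\mathbf N})$ at which the relevant Hauptmodul, or the algebraic function of it, blows up; this orbit one enumerates from the known cusp and elliptic-point data of $\Gamma_{\mathbf N}$. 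By Theorem~\ref{theo:3}$(iii)$, the value $z=\infty$ corresponds to $\tilde\tau=\tfrac12+\tfrac i2\cot(\pi/M_{\mathbf N})$ --- a cusp of $\Gamma_{\mathbf N}$ when $M_{\mathbf N}=2$, an elliptic point when $M_{\mathbf N}\in\{3,4,6\}$ --- so ${\bf z}_{\mathbf N}$ is singular at $q=-\exp(-\pi\cot(\pi/M_{\mathbf N}))$; checking case by case that no singular orbit lies at greater height than $\tfrac12\cot(\pi/M_{\mathbf N})$, and invoking the upper bound of Theorem~\ref{theo:4}$(i)$, pins the radius of convergence to $\exp(-\pi\cot(\pi/M_{\mathbf N}))$. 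Finally, ${\bf z}_{\mathbf N}$ is non-constant and algebraic over the field of modular functions for a lattice $\Gamma_{\mathbf N}\subset\mathrm{PSL}_2(\mathbb R)$ (for $\Gamma_{\mathbf N}$ itself, or a subgroup of index $2$ when $\Phi_{\mathbf N}=2$); the $\Gamma_{\mathbf N}$-orbit of any of its singular points accumulates on all of $\partial\mathbb H$ (this being the limit set of a Fuchsian group of the first kind), so the singularities of ${\bf z}_{\mathbf N}(q)$ are dense in $|q|=1$, making that circle a natural boundary.

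The main obstacle is the second paragraph: making the modular parametrisation precise for each of the eight tuples --- identifying $\Gamma_{\mathbf N}$ (which for $\Phi_{\mathbf N}=2$ turns out to be a Fricke group $\Gamma_0(m)^+$ rather than $\Gamma_0(m)$), normalising the nome so that the distinguished singularity lands exactly at $-\exp(-\pi\cot(\pi/M_{\mathbf N}))$, tracking the ramification of the two-sheeted covers when $\Phi_{\mathbf N}=2$, and verifying that the image of $z=\infty$ is the singular orbit of largest height. Once that is in place, the enumeration of the singular set and the natural-boundary argument are essentially routine.
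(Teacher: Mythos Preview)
Your strategy matches the paper's: both reduce the eight nontrivial tuples to modular parametrisations and then read off the singular set, the radius of convergence, and the natural boundary from the modular structure. The paper, however, proceeds far more concretely. For each tuple it quotes an explicit closed form for $\mathbf z_{\mathbf N}(q)$ in terms of Eisenstein series, theta or eta quotients, or $J(q)$ (drawn from Borwein--Borwein, Lian--Yau, Zudilin), uses Lemmas~\ref{lem:expinf} and~\ref{lem:qz} to determine the type of the singularity at $-\exp(-\pi\cot(\pi/M))$, and then applies the valence formula \eqref{eq:valform} to the relevant modular form for $\Gamma_0(2)$ or $\Gamma_0(3)$ (not a Fricke extension) to pin down all zeros and poles and confirm that no other orbit has greater imaginary part. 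Your Schwarz-triangle framing and the limit-set argument for the natural boundary are correct and more conceptual, but the paper's explicit formulas make each verification self-contained rather than relying on the classification of arithmetic triangle groups; what you flag as the ``main obstacle'' is exactly the content the paper supplies.

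One technical slip: the quadratic transformation you invoke for $\Phi_{\mathbf N}=2$ does not apply as written. For ${}_2F_1(\tfrac1N,1-\tfrac1N;1;w)$ you would need $(a+b+1)/2=1$ together with $\{a/2,b/2\}=\{\tfrac1N,1-\tfrac1N\}$, which forces $a+b=2\ne1$. No transformation is in fact needed there: for $N\in\{3,4,6\}$ and for $(2,2)$ the monodromy is already arithmetic and the modular identification is available directly in the literature. The paper reserves the quadratic transformation \eqref{eq:trans} for the $\Phi_{\mathbf N}=3$ cases $(2,4)$ and $(2,2,2)$, where it expresses $\mathbf z_{(2,4)}$ and $\mathbf z_{(2,2,2)}$ polynomially in $\mathbf z_{(4)}$ and $\mathbf z_{(2,2)}$ respectively (see \eqref{eq:z24/z4} and \eqref{eq:z222/z22}).
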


In Section~\ref{sec:Phi}, one
also finds precise information on the poles, respectively branch points.

In the case where $\Phi_{\mathbf N}\ge4$, we can offer only the
partial result below concerning the analytic nature of ${\bf z}_{\bf
N}(q)$, with the proof given in Section~\ref{sec:prop}. 
There, and also later, we use the notion of
a {\it right  slit neighbourhood\/} of $z_0$, by which
we mean a domain of the form (see Figure~\ref{fig:1})
\begin{equation} \label{eq:slit} 
\De(z_0;r,\ep):=
\{z:\vert z-z_0\vert<r\text{ and }\vert\arg(z-z_0)\vert> \ep\}
\end{equation}
for some $r,\ep>0$, where it is understood that $\arg(z-z_0)$ is taken
from $(-\pi,\pi]$. A left slit neighbourhood is defined in the obvious 
analogous way.

\begin{figure}
\centertexdraw{
  \drawdim truecm  \linewd.05
\move(0 0)
  \rlvec(2 .5)
\move(0 0)
  \rlvec(2 -.5)
\move(0 0)
\larc r:2.1 sd:14 ed:346
\Punkt(0 0) 
\htext(-.6 -0.2){$z_0$}

\linewd.01
\move(0 0)
  \rlvec(3 0)
\move(0 0)
\larc r:2.1 sd:0 ed:14
\htext(1.8 .15){$\ep$}
\move(0 0)
  \ravec(0 2.1)
\htext(.1 1){$r$}
\move(-3 0)
}
\vskip40pt
\caption{The right slit neighbourhood $\De(z_0;r,\ep)$ of $z_0$}
\label{fig:1}
\end{figure}

\begin{prop} \label{prop:1}
For all integers $N_1,N_2, \ldots, N_k\ge2$ with
  $\Phi_{\mathbf{N}}\ge 4$,
the series ${\bf z}_{\bf N}(q)$ can be analytically continued to a 
domain that contains the half-open segment $[0,{\bf q}_{\bf
  N}(1/C_{\mathbf N}))$.
Moreover, this domain can be chosen so that it contains as
well a right slit neighbourhood  
of ${\bf q}_{\bf
  N}(1/C_{\mathbf N})$ {\em(}where the angle of the slit, controlled by the
parameter $\ep$ in \eqref{eq:slit}, can be chosen arbitrarily small\/{\em)}.
\end{prop}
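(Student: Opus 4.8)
The plan is to exploit the analytic continuation of ${\bf q}_{\bf N}(z)$ established in Theorem~\ref{theo:3} together with a careful study of the local behaviour of ${\bf q}_{\bf N}$ near the branch point $z=1/C_{\bf N}$, and then to invert. First I would recall from Theorem~\ref{theo:3}$(i)$ that ${\bf q}_{\bf N}(z)$ is analytic on $\mathbb C\setminus[1/C_{\bf N},+\infty)$; since ${\bf q}_{\bf N}'(0)=1\ne0$ and, by Theorem~\ref{theo:1}, all Taylor coefficients of ${\bf q}_{\bf N}$ are positive, the restriction of ${\bf q}_{\bf N}$ to the real segment $[0,1/C_{\bf N})$ is real, strictly increasing, and its derivative stays positive there (the derivative series again has non-negative coefficients). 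Hence ${\bf q}_{\bf N}$ maps $[0,1/C_{\bf N})$ bijectively and with non-vanishing derivative onto $[0,{\bf q}_{\bf N}(1/C_{\bf N}))$, where ${\bf q}_{\bf N}(1/C_{\bf N}):=\lim_{z\to 1/C_{\bf N}^-}{\bf q}_{\bf N}(z)$ exists (and is finite $\le 1$ by Theorem~\ref{theo:2}$(iii)$). By the inverse function theorem applied at each point of this open segment, ${\bf z}_{\bf N}(q)$ extends analytically along $[0,{\bf q}_{\bf N}(1/C_{\bf N}))$; a compactness argument on the derivative gives a genuine open neighbourhood (a ``thickened segment'') of $[0,{\bf q}_{\bf N}(1/C_{\bf N}))$ on which ${\bf z}_{\bf N}$ is defined and analytic.

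The more delicate point is the behaviour at the right endpoint ${\bf q}_{\bf N}(1/C_{\bf N})$, which is where a right slit neighbourhood must be produced. Here the hypothesis $\Phi_{\bf N}\ge 4$ is used: I would invoke the fine analysis of ${\bf F}_{\bf N}$ and ${\bf G}_{\bf N}$ near $1/C_{\bf N}$ promised in Section~\ref{sec:aux} (the asymptotics underlying Theorem~\ref{theo:2}$(vi)$). When $\Phi_{\bf N}\ge4$ the function ${\bf F}_{\bf N}(z)$ is \emph{bounded} near $1/C_{\bf N}$ with ${\bf F}_{\bf N}(1/C_{\bf N})\ne 0$ (it is a convergent positive series, cf.\ the exceptional-case discussion after Theorem~\ref{theo:2}), so $\mathbf G_{\bf N}/\mathbf F_{\bf N}$, and hence ${\bf q}_{\bf N}(z)=z\exp(\mathbf G_{\bf N}(z)/\mathbf F_{\bf N}(z))$, extends continuously to $z=1/C_{\bf N}$ from within any sector $\{\vert\arg(z-1/C_{\bf N})-\pi\vert<\pi-\delta\}$ avoiding the cut, with a local Puiseux-type expansion governed by the local exponents of ${\bf L}$ at its singular point. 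The key quantity to control is ${\bf q}_{\bf N}'(z)$ as $z\to 1/C_{\bf N}$ from the left: one shows it does not vanish and, if it blows up or tends to a finite nonzero limit, the map ${\bf q}_{\bf N}$ is still a local homeomorphism of a cut disk around $1/C_{\bf N}$ onto a cut disk around ${\bf q}_{\bf N}(1/C_{\bf N})$, the cut being the image of $[1/C_{\bf N},\cdot)$, which lies to the \emph{right}. Transporting this local biholomorphism through ${\bf z}_{\bf N}$ gives analyticity of ${\bf z}_{\bf N}$ on a region of the form $\De({\bf q}_{\bf N}(1/C_{\bf N});r,\ep)$; shrinking the sector in the $z$-plane (choosing $\delta$ small) makes $\ep$ correspondingly small, which is exactly the freedom asserted in the statement.

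I would organise the write-up in three steps: (1) real-analytic monotonicity and the open ``tube'' around $[0,{\bf q}_{\bf N}(1/C_{\bf N}))$ via the inverse function theorem plus a compactness/Heine–Borel argument; (2) the local study at $z=1/C_{\bf N}$, reducing to the boundedness and non-vanishing of ${\bf F}_{\bf N}$ there together with the continuation of ${\bf G}_{\bf N}+\log(z){\bf F}_{\bf N}$ from Section~\ref{sec:analytic}, to conclude that ${\bf q}_{\bf N}$ is a local biholomorphism from a right-slit disk at $1/C_{\bf N}$ onto a right-slit disk at ${\bf q}_{\bf N}(1/C_{\bf N})$; (3) gluing the inverse maps from (1) and (2) to obtain the claimed domain. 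I expect the main obstacle to be Step~(2): one must verify that ${\bf q}_{\bf N}$ is genuinely injective (not merely locally a submersion) on a full right-slit neighbourhood of $1/C_{\bf N}$ — i.e.\ that ${\bf q}_{\bf N}'$ has no zeros there and that the boundary arc is not folded back onto itself — and that the image cut is precisely ${\bf q}_{\bf N}([1/C_{\bf N},+\infty))$, sitting on the correct side. This hinges on the precise local expansion of ${\bf q}_{\bf N}$ at $1/C_{\bf N}$, for which the asymptotic results of Sections~\ref{sec:analytic} and~\ref{sec:aux} are the crucial input; the case $k=1,\ \mathbf N=(2)$ is excluded anyway since there $\Phi_{\bf N}=1<4$.
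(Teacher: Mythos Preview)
Your proposal is correct and follows essentially the same route as the paper: positivity of the Taylor coefficients (Theorem~\ref{theo:1}) gives ${\bf q}'_{\bf N}(z)>0$ on $[0,1/C)$, whence the inverse function theorem produces the analytic continuation of ${\bf z}_{\bf N}$ along that segment; the behaviour near the endpoint is handled by the singular expansion of ${\bf q}_{\bf N}$ at $z=1/C$ (Lemma~\ref{lem:exp1/C}$(ii)$,$(iii)$, the precise version of what you call the ``local Puiseux-type expansion''), together with the inversion Lemmas~\ref{lem:qz2}/\ref{lem:qz3}, which yield the right slit neighbourhood with arbitrarily small $\ep$. Your worries about injectivity and the geometry of the image cut in Step~(2) are legitimate but are resolved automatically once one has the explicit first singular term in the expansion (with nonzero coefficient ${\sf q}_1<0$ and nonvanishing ${\sf q}_{d+\frac12}$ or ${\sf q}_{d+}$), so the obstacle you anticipate does not in fact materialise.
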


\begin{Remark}
The function ${\bf q}_{\bf N}(z)$ can also be locally inverted at
$z=\infty$. Consequently, the\break corresponding local 
inverse function $\tilde{\bf z}_{\bf N}(q)$ is defined in
a left slit neighbour-\break hood of $-\exp(-\pi\cot(\pi/M_{\mathbf N}))$
(see the last paragraph in Section~\ref{sec:prop}).
The point\break
$-\exp(-\pi\cot(\pi/M_{\mathbf N}))$ is always a singular point.
Whether it is a pole, a branch point, or an essential singularity,
this can be decided in each case
by inspecting the expansions at $\infty$ for ${\bf q}_{\bf
  N}(z)$ given in Lemma~\ref{lem:expinf} (taking also into
consideration the remark after the statement of the lemma), and by
subsequently applying Lemma~\ref{lem:qz}, with the accompanying remark
in mind. Unfortunately, we do not know how to relate
$\tilde{\bf z}_{\bf N}(q)$ and ${\bf z}_{\bf N}(q)$.
\end{Remark}

Certainly, the above result cannot even answer the innocent question of
what the radius of convergence of ${\bf z}_{\bf  N}(q)$ as a power
series in $q$ is. However, we performed extensive calculations using
the computer algebra system PARI/GP \cite{PARI}, in which we computed
approximations to the radius of convergence using the quotient rule up
to many digits. These computations provide abundant evidence for the
exact value of the radius of convergence for the cases where
$\Phi_{\mathbf N}$ is ``large."

\begin{conj} \label{conj:1}
For all integers $N_1,N_2, \ldots, N_k\ge2$ with $\Phi_{\mathbf{N}}\ge4$,
the radius of convergence of the Taylor 
series of  ${\bf z}_{\bf N}(q)$ is equal to ${\bf q}_{\bf N}(1/C_{\bf N})$.
Moreover, the point ${\bf q}_{\bf N}(1/C_{\bf N})$ is the only
singularity of ${\bf z}_{\bf N}(q)$ on the boundary of its disk of convergence.
\end{conj}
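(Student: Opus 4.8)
\medskip
\noindent\emph{A possible line of attack.}
Write $R:={\bf q}_{\bf N}(1/C_{\bf N})$ and $D_R:=\{q:|q|<R\}$. By Theorem~\ref{theo:4}$(ii)$ the radius of convergence of ${\bf z}_{\bf N}$ is at most $R$, by Proposition~\ref{prop:1} the germ of ${\bf z}_{\bf N}$ at $0$ continues analytically along all of $[0,R)$, and any power series of finite radius has a singularity on its circle of convergence. The conjecture would therefore follow from the single assertion that \emph{${\bf z}_{\bf N}$ continues holomorphically to the entire disc $D_R$, and that $R$ is the only point of the circle $|q|=R$ at which this continuation is singular.} I would try to establish this by realising the continuation as an inverse of ${\bf q}_{\bf N}$ on a suitable sheet inside the slit plane $V:=\mathbb C\setminus[1/C_{\bf N},+\infty)$, on which ${\bf q}_{\bf N}$ is holomorphic and single-valued by Theorem~\ref{theo:3}$(i)$.

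A preliminary warning: one cannot invert ${\bf q}_{\bf N}$ on the disc $\{|z|<1/C_{\bf N}\}$ itself. Indeed, since the Taylor coefficients $\kappa_m$ of ${\bf q}_{\bf N}$ are positive (Theorem~\ref{theo:1}) and $\kappa_1=1$, the triangle inequality gives $|{\bf q}_{\bf N}(z)|\le\sum_{m\ge1}\kappa_m|z|^m={\bf q}_{\bf N}(|z|)\le R$ for $|z|\le1/C_{\bf N}$, with equality only at $z=1/C_{\bf N}$; hence ${\bf q}_{\bf N}(\{|z|\le1/C_{\bf N}\})$ meets the circle $|q|=R$ only at $q=R$ and cannot cover $D_R$. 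One must therefore exploit the continuation of ${\bf q}_{\bf N}$ \emph{past} $|z|=1/C_{\bf N}$, along rays $\arg z=\theta\neq0$, furnished by Theorem~\ref{theo:3}. Let $\Omega$ be the connected component of $0$ in ${\bf q}_{\bf N}^{-1}(D_R)\cap V$. The goal is to prove that ${\bf q}_{\bf N}$ maps $\Omega$ biholomorphically onto $D_R$. As $D_R$ is simply connected and $\Omega$ connected, it is enough to show that this restriction is a \emph{proper} map with nowhere-vanishing derivative, for then it is an unbranched covering of a simply connected base, hence one-sheeted; its inverse agrees with ${\bf z}_{\bf N}$ near $0$ and extends it holomorphically to $D_R$. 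A proper biholomorphism $\Omega\to D_R$ extends to a homeomorphism of closures --- the boundary of $\Omega$ being, once point $(iii)$ below is in hand, an analytic Jordan curve (a component of $\{z\in V:|{\bf q}_{\bf N}(z)|=R\}$ capped off at the tip $1/C_{\bf N}$) --- so the continuation of ${\bf z}_{\bf N}$ can fail on $|q|=R$ only at the image $R$ of the tip, which is the ``uniqueness of the singularity'' statement.

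Three points would then remain. $(i)$~$\Omega$ is bounded: by Theorem~\ref{theo:3}$(iii)$, ${\bf q}_{\bf N}(z)\to-\exp(-\pi\cot(\pi/M_{\bf N}))$ as $z\to\infty$ in $V$, and for $\Phi_{\bf N}\ge4$ this value has modulus $\exp(-\pi\cot(\pi/M_{\bf N}))>R$ by Lemma~\ref{lem:exp>q(1/C)}, so $\{z\in V:|{\bf q}_{\bf N}(z)|<R\}$ omits a neighbourhood of $\infty$. $(ii)$~$\overline\Omega$ does not meet the open half-line $(1/C_{\bf N},+\infty)$; together with $(i)$ this makes ${\bf q}_{\bf N}|_\Omega$ proper, since the preimage in $\Omega$ of a compact $K\subset D_R$ is then closed and bounded, and its boundary points in $V$ would have to satisfy $|{\bf q}_{\bf N}|=R$, which is impossible. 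Point $(ii)$ should come out of the explicit singular expansion of ${\bf q}_{\bf N}$ at $1/C_{\bf N}$ (Section~\ref{sec:aux}) and the expansions at $\infty$ (Lemma~\ref{lem:expinf}), which govern the boundary values of ${\bf q}_{\bf N}$ on the two banks of the cut and should show that near the tip the component $\Omega$ peels away from the cut. $(iii)$~${\bf q}_{\bf N}'$ has no zero on $\overline\Omega\cap V$; equivalently, no critical value of ${\bf q}_{\bf N}$ lies in $\overline{D_R}$. From $\log{\bf q}_{\bf N}=({\bf G}_{\bf N}+\log z\,{\bf F}_{\bf N})/{\bf F}_{\bf N}$ one obtains
\[
z\,\frac{{\bf q}_{\bf N}'(z)}{{\bf q}_{\bf N}(z)}=\frac{z\,w(z)}{{\bf F}_{\bf N}(z)^2},
\]
where $w$ is the Wronskian of the two distinguished solutions ${\bf F}_{\bf N}$ and ${\bf G}_{\bf N}+\log z\,{\bf F}_{\bf N}$ of ${\bf L}y=0$, so that the critical points of ${\bf q}_{\bf N}$ on $V$ are precisely the zeros of $w$ there.

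Point $(iii)$ is the main obstacle, and the reason this statement is only conjectural. When $\Phi_{\bf N}=2$, $w$ is the \emph{full} Wronskian of a second-order operator, which Abel's formula makes explicit and zero-free on $V\setminus\{0\}$; and for $\Phi_{\bf N}\le3$ the whole argument is subsumed by the modular description behind Theorem~\ref{theo:5} (there the mirror map is a locally injective Hauptmodul). But for $\Phi_{\bf N}\ge4$, $w$ is merely a $2\times2$ sub-Wronskian of an operator of order $\ge4$, and I see no a~priori reason why it should avoid the finitely many zeros that would matter here. What seems to be needed is a new input forcing every critical value of ${\bf q}_{\bf N}$ to have modulus $>R$ --- for instance a P\'olya--Schur/Hurwitz-type positivity property of $w$, in the spirit of the P\'olya theorem already used in Section~\ref{sec:polya}, or a convexity property of ${\bf q}_{\bf N}$ drawn from the positivity of its Taylor coefficients (Theorem~\ref{theo:1}). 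In the absence of such an input one can still hope to pin down the \emph{radius} alone: were the Taylor coefficients of ${\bf z}_{\bf N}$ known to be eventually of one (hence positive) sign --- part of Conjecture~\ref{conj:2} --- then, $R$ being by Proposition~\ref{prop:1} and Theorem~\ref{theo:4}$(ii)$ the singularity of ${\bf z}_{\bf N}$ closest to $0$ on the positive real axis, Pringsheim's theorem would yield the radius of convergence to be $R$ at once; but this only trades one conjecture for another, and says nothing about the uniqueness of the boundary singularity.
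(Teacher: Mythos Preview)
Your line of attack is quite different from the paper's. In Section~\ref{sec:What} the paper expresses the $m$-th Taylor coefficient of ${\bf z}_{\bf N}$ by the Lagrange inversion integral $\frac{1}{2i\pi m}\int_{\mathcal C}\frac{\dd z}{{\bf q}_{\bf N}(z)^m}$ and then attempts to deform the small contour $\mathcal C$ into a large one (passing through $1/C$, hugging the cut on both sides, and closing up along a big circle) on which $|{\bf q}_{\bf N}(z)|\ge R$. The estimate is then immediate; the sole missing step is the existence of the two connecting arcs $\mathcal C_\pm$ with $|{\bf q}_{\bf N}|\ge R$. The paper does not address the uniqueness of the boundary singularity at all (``would have to be decided by an additional argument''). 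Your approach, by contrast, aims to exhibit ${\bf z}_{\bf N}$ globally as the inverse of a biholomorphism ${\bf q}_{\bf N}\colon\Omega\to D_R$, and its Carath\'eodory boundary extension would then give the uniqueness part for free. Your final paragraph, deducing the radius from Conjecture~\ref{conj:2} via Pringsheim, is exactly the content of the paper's Section~\ref{sec:conj}.

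There is, however, one point in your analysis that you should revisit. You identify~$(iii)$, the absence of critical points of ${\bf q}_{\bf N}$ on $\overline\Omega\cap V$, as ``the main obstacle,'' and you devote your Wronskian discussion to it. In fact~$(iii)$ is \emph{automatic} once~$(i)$ and~$(ii)$ are known: a proper holomorphic map between domains in~$\mathbb C$ is a finite branched covering, and its degree equals the number of preimages (with multiplicity) of any value. Taking the value~$0$, and using that ${\bf q}_{\bf N}(z)=z\exp({\bf G}_{\bf N}(z)/{\bf F}_{\bf N}(z))$ vanishes only at $z=0$ in~$V$, and simply, the degree is~$1$; a proper holomorphic map of degree~$1$ is a biholomorphism, so there can be no branch points in~$\Omega$. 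Thus the genuine obstruction is~$(ii)$: that $\overline\Omega$ does not touch the open cut $(1/C,\infty)$. Your argument for~$(ii)$ only covers the behaviour \emph{near the tip} $1/C$ (via Lemma~\ref{lem:exp1/C}) and \emph{near~$\infty$} (via Lemma~\ref{lem:expinf}); it says nothing about the intermediate stretch of the cut, and this is precisely where the paper's missing arcs~$\mathcal C_\pm$ live. So the core difficulty in the two approaches is the same --- controlling $|{\bf q}_{\bf N}|$ along the cut away from its endpoints --- but your framing via sub-Wronskians, while correct as an identity, is not where the difficulty lies.
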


We discuss a possible line of argument to prove this conjecture
in Section~\ref{sec:What}.

\medskip

While, for $\Phi_{\mathbf N}=2,3$, Theorem~\ref{theo:5} shows that the
mirror map ${\bf z}_{\bf N}(q)$ has a natural boundary, our computer
experiments for $\Phi_{\mathbf N}\ge 4$ did not allow us to develop an
intuition whether or not there is a natural boundary for ${\bf z}_{\bf
N}(q)$ in these cases.

Concerning the Taylor coefficients of ${\bf z}_{\bf N}(q)$ at $q=0$,
our numerical calculations suggest a very predictable behaviour.

\begin{conj} \label{conj:2}
Let $N_1,N_2, \ldots, N_k$ be positive integers, all at least $2$.

\begin{enumerate}
\item[$(i)$]
If $\Phi_{\mathbf{N}}\le3$, the Taylor coefficients of 
${\bf z}_{\bf N}(q)$ have alternating signs.
\item[$(ii)$]
If $\Phi_{\mathbf{N}}=4$, the coefficients of $q$ and of $q^3$ in
the Taylor series of ${\bf z}_{\bf N}(q)$ are positive, while all
other coefficients are negative, except the constant coefficient,
which vanishes.  
\item[$(iii)$]
If $\Phi_{\mathbf{N}}\ge5$, the coefficient of $q$ in
the Taylor series of ${\bf z}_{\bf N}(q)$ is positive, while all
other coefficients are negative, except the constant coefficient,
which vanishes.  
\end{enumerate}
\end{conj}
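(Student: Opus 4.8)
The plan is to handle the two regimes $\Phi_{\bf N}\le 3$ and $\Phi_{\bf N}\ge 4$ by entirely different means. In the first regime the mirror maps are of modular origin, and part~$(i)$ should follow from the explicit descriptions of ${\bf z}_{\bf N}(q)$ provided by Theorem~\ref{theo:5} and worked out in Section~\ref{sec:Phi}: in each of the nine cases with $\Phi_{\bf N}\le 3$, ${\bf z}_{\bf N}(q)$ is, up to an elementary substitution, a classical modular function of small level (a Hauptmodul or a quotient of Dedekind eta functions). Writing ${\bf z}_{\bf N}(q)=\sum_{n\ge1}b_nq^n$, the assertion that the $b_n$ alternate in sign is equivalent to saying that $-{\bf z}_{\bf N}(-q)$ has non-negative Taylor coefficients. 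For $\mathbf N=(2)$ this is immediate from ${\bf z}_{(2)}(q)=q/(1+q)^2$. In the remaining eight cases one would substitute $q\mapsto-q$ in the modular identity (equivalently, replace $\tau$ by $\tau+\tfrac12$) and read off non-negativity from a classical product or Lambert-series expansion, or, when the pertinent series is the reciprocal of one with non-negative coefficients, from a Kaluza-type argument as in Lemma~\ref{lem:2a}. Although this part is not contained in Theorem~\ref{theo:5}, it should be accessible by classical means once the modular identities behind Section~\ref{sec:Phi} are made fully explicit; the one genuinely delicate point is those cases in which the relevant $q$-expansion is the reciprocal of a series whose coefficients merely alternate in sign (rather than being eventually of one sign), for which Kaluza's criterion does not apply directly and one must instead appeal to the known Rademacher-type exact formulas for the Fourier coefficients.

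For parts~$(ii)$ and~$(iii)$ the starting point is Lagrange inversion. Setting $h(z):={\bf G}_{\bf N}(z)/{\bf F}_{\bf N}(z)=\log\big({\bf q}_{\bf N}(z)/z\big)=\sum_{m\ge1}h_mz^m$, one has $b_n=\frac1n[z^{n-1}]\exp\big(-n\,h(z)\big)$, and the substitution $z\mapsto-z$ turns this into $(-1)^{n-1}b_n=\frac1n[z^{n-1}]\exp\big(n\,\tilde h(z)\big)$ with $\tilde h(z):=-h(-z)=\sum_{m\ge1}(-1)^{m-1}h_mz^m$. A preliminary step is to prove that $h_m>0$ for all $m$; this should follow from the positivity of the Taylor coefficients of ${\bf q}_{\bf N}(z)/z$ (Theorem~\ref{theo:1}) together with a log-convexity property of those coefficients, again in the spirit of Lemma~\ref{lem:2a}. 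Granting $h_m>0$, the low-order coefficients are computed directly from \eqref{eq:definitionBNgras}: $[z^0]\exp(n\tilde h)=1$ gives $b_1=1$; $\tfrac12[z^1]\exp(2\tilde h)=h_1>0$ gives $b_2=-h_1<0$; and $\tfrac13[z^2]\exp(3\tilde h)$ gives $b_3=\tfrac32h_1^2-h_2$, so that the sign of $b_3$ is governed by whether $\tfrac32h_1^2$ exceeds $h_2$ or not. Since $h_1={\bf H}_{\bf N}(1)\,{\bf B}_{\bf N}(1)$ and $h_2={\bf H}_{\bf N}(2)\,{\bf B}_{\bf N}(2)-{\bf H}_{\bf N}(1)\,{\bf B}_{\bf N}(1)^2$ are elementary functions of $C_{\bf N}$ and the residues $r_{i,j}$, one can hope to verify by a direct (if laborious) computation that, within the range $\Phi_{\bf N}\ge4$, one has $\tfrac32h_1^2>h_2$ exactly when $\Phi_{\bf N}=4$ and $\tfrac32h_1^2<h_2$ when $\Phi_{\bf N}\ge5$; this is precisely where the dichotomy between $(ii)$ and $(iii)$ originates. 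For the remaining coefficients, the asymptotic estimates of Theorem~\ref{theo:2}, combined with the fine behaviour of ${\bf F}_{\bf N}$ and ${\bf G}_{\bf N}$ near $1/C_{\bf N}$ analysed in Section~\ref{sec:aux} and with the local study of ${\bf z}_{\bf N}(q)$ at ${\bf q}_{\bf N}(1/C_{\bf N})$ behind Proposition~\ref{prop:1}, should translate — via singularity analysis applied to the inverse function — into the statement that $b_n<0$ for all sufficiently large $n$.

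The main obstacle is the gap between ``$b_n$ has the conjectured sign for all large $n$'' and ``$b_n$ has the conjectured sign for \emph{every} $n\ge2$, with the single exception $n=3$ when $\Phi_{\bf N}=4$''. The presence of that lone positive coefficient at $q^3$ shows that the sequence of signs of the $b_n$ is genuinely non-monotone, so it cannot be forced by any single global positivity or log-convexity statement about ${\bf z}_{\bf N}$ or its reciprocal; in particular the $q\mapsto-q$ device that settled part~$(i)$ breaks down here, since $-{\bf z}_{\bf N}(-q)$ itself then acquires negative coefficients. To bridge the gap one would seem to need either an induction on $n$ carried along with a carefully chosen family of inequalities among the intermediate quantities $[z^{j}]\exp(n\,\tilde h(z))$, or — more ambitiously — an explicit combinatorial or modular model for ${\bf z}_{\bf N}(q)$ (suitably corrected in the coefficients of $q$ and of $q^3$) that displays the sign pattern by inspection, in the spirit of the combinatorial interpretations sought after Theorem~\ref{theo:1}. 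Producing such an argument uniformly in $\mathbf N$ is what we regard as the crux of the problem, and it is the reason the statement is recorded here only as a conjecture.
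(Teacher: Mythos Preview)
You have correctly identified that the statement is a \emph{conjecture}, not a theorem, and the paper does not prove it either. Your closing paragraph is exactly right: the gap between ``eventually of the right sign'' and ``of the right sign for every $n$'' is the crux, and the paper offers no mechanism for bridging it.

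That said, your sketch of part~$(i)$ is more optimistic than what the paper actually achieves. Of the nine cases with $\Phi_{\mathbf N}\le 3$, the paper settles only four: $\mathbf N=(2)$ trivially from ${\bf z}_{(2)}(q)=q/(1+q)^2$; $\mathbf N=(2,2)$ and $\mathbf N=(2,2,2)$ by direct inspection of the eta-quotient expressions \eqref{eq:z22} and \eqref{eq:z222}; and $\mathbf N=(2,6)$ by appeal to a result of Asai, Kaneko and Ninomiya on the coefficients of $1/j(\tau)$. The remaining five cases $\mathbf N=(3),(4),(6),(2,3),(2,4)$ are left open. Your suggestion of substituting $q\mapsto -q$ and invoking Kaluza-type or Rademacher-type arguments is reasonable in spirit, but you should be aware that it does not go through routinely: for instance, for $\mathbf N=(6)$ the function $\mathbf z_{(6)}(q)$ involves a square root of a ratio of Eisenstein series (see \eqref{eq:z6} and the discussion following it), so there is no eta-quotient to inspect, and the branch points of $\mathbf z_{(6)}(q)$ inside the unit disk make any positivity argument subtle. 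The paper does establish the \emph{asymptotic} alternation in all nine cases, via Theorem~\ref{theo:5}, Lemma~\ref{lem:exp1/C}, Lemma~\ref{lem:qz} and singularity analysis, which matches the asymptotic part of your sketch.

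For parts~$(ii)$ and~$(iii)$, two remarks. First, your ``preliminary step'' that $h_m>0$ for all $m\ge1$ is not merely plausible: it is exactly what is proved in the course of the proof of Theorem~\ref{theo:1} (the positivity of the Taylor coefficients of ${\bf G}_{\bf N}/{\bf F}_{\bf N}$ is deduced from Lemma~\ref{lem:1} and Corollary~\ref{cor:1}), so you may use it freely. Second, the paper's contribution here is weaker than your asymptotic outline: it shows only that \emph{if Conjecture~\ref{conj:1} holds} then almost all coefficients of ${\bf z}_{\bf N}(q)$ are negative (Proposition~\ref{prop:conj1weak}, via the singular expansion \eqref{eq:zsing} and transfer theorems). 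Your proposed direct singularity analysis of the inverse function, independent of Conjecture~\ref{conj:1}, would actually require knowing that ${\bf q}_{\bf N}(1/C_{\bf N})$ is the \emph{only} singularity of ${\bf z}_{\bf N}(q)$ on its circle of convergence --- which is precisely the second assertion of Conjecture~\ref{conj:1} and is itself open. So even the ``eventually negative'' part of $(ii)$ and $(iii)$ is at present conditional.

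Your Lagrange-inversion computation of $b_1,b_2,b_3$ is correct, and your identification of the sign of $b_3=\tfrac32 h_1^2-h_2$ as the origin of the $\Phi=4$ versus $\Phi\ge5$ dichotomy is a nice observation not made explicit in the paper; verifying that inequality uniformly in $\mathbf N$ is, however, already a nontrivial exercise, and of course says nothing about $b_4,b_5,\dots$.
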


We show in Section~\ref{sec:conj} that Conjecture~\ref{conj:2} implies
the first assertion in Conjecture~\ref{conj:1}. 
On the other hand, as we show in Section~\ref{sec:conj1weak},
Conjecture~\ref{conj:1} implies a weak version of
Conjecture~\ref{conj:2}.

There are three cases, where
Conjecture~\ref{conj:2}.$(i)$ is a theorem:
in the case $\mathbf N=(2,6)$, where 
${\bf z}_{(2,6)}(q)=1/j(\tau)$ (with $q=\exp(2i\pi\tau)$ and $j(\tau)$
the Dedekind--Klein $j$-invariant; see Section~\ref{sec:Phi=3}), this has been
established by Asai, Kaneko and Ninomiya
\cite[Cor.~2, p.~94]{AsKaAA}. 
In the case $\mathbf N=(2,2)$, alternance of coefficients follows immediately
upon inspection of the explicit expression for ${\bf z}_{(2,2)}(q)$ 
given in \eqref{eq:z22}.
Similarly, 
in the case $\mathbf N=(2,2,2)$, alternance of coefficients follows immediately
upon inspection of the explicit expression for ${\bf z}_{(2,2,2)}(q)$ 
given in \eqref{eq:z222}.
Moreover, Theorem~\ref{theo:5}, Lemma~\ref{lem:exp1/C}, 
Lemma~\ref{lem:qz}, together with 
the standard theorems of singularity analysis (see 
\cite[Ch.~VI]{FlSeAA}) imply that Conjecture~\ref{conj:2}.$(i)$
holds ``asymptotically," meaning that the coefficients of $q^m$ of 
${\bf z}_{\mathbf N}(q)$ alternate in sign for all sufficiently large
$m$. 

The expression for ${\bf q}_{\bf N}(z)$ in terms of ${\bf F}_{\bf
N}(z)$ and ${\bf G}_{\bf N}(z)$ is not very convenient for a study
of fine analytic properties of ${\bf q}_{\bf N}(z)$ and of
the corresponding mirror map ${\bf z}_{\bf N}(q)$. The following
conjecture, extending Theorem~\ref{theo:1}, 
is motivated by the search for an alternative expression,
as we explain below.

\begin{conj} \label{conj:5}
For all integers $N_1,N_2, \ldots, N_k\ge2$ and positive integers $n$, 
the coefficient of $z^{m}$ in $(C_{\mathbf N}z-1)^n{\bf q}_{\bf N}(z)$ is
positive for all $m\ge n+1$. 
\end{conj}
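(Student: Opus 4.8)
\emph{Strategy.} The plan is to recast Conjecture~\ref{conj:5} as a classical moment problem and then as a single assertion about the boundary behaviour of ${\bf q}_{\bf N}$ along its branch cut. Write ${\bf q}_{\bf N}(z)=\sum_{m\ge 1}q_mz^m$ (so $q_m>0$ by Theorem~\ref{theo:1}) and put $h_m:=q_m/C_{\bf N}^m$. Expanding $(C_{\bf N}z-1)^n$ by the binomial theorem and using $C_{\bf N}^jq_{m-j}=C_{\bf N}^mh_{m-j}$, one obtains
\begin{equation*}
[z^m]\,(C_{\bf N}z-1)^n{\bf q}_{\bf N}(z)=(-1)^n\,C_{\bf N}^m\,(\Delta^nh)_{m-n},
\end{equation*}
where $\Delta$ denotes the forward difference operator, $(\Delta h)_k=h_{k+1}-h_k$. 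Hence the positivity claimed in Conjecture~\ref{conj:5} for all $n\ge 1$, taken together with the case $n=0$ (which is Theorem~\ref{theo:1}), is \emph{equivalent} to the single statement that the sequence $(q_m/C_{\bf N}^m)_{m\ge 1}$ is completely monotonic; the distinction between strict and non-strict inequalities is immaterial here, because ${\bf q}_{\bf N}$ is not a rational function. By Hausdorff's theorem this is in turn equivalent to the integral representation
\begin{equation*}
\frac{{\bf q}_{\bf N}(z/C_{\bf N})}{z}=\int_0^1\frac{d\mu(t)}{1-zt}
\end{equation*}
for some positive Borel measure $\mu$ on $[0,1]$. This is precisely the ``alternative expression'' for ${\bf q}_{\bf N}$ alluded to above, and it would be the target of the proof.

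\emph{Reduction to a boundary inequality.} Next I would peel off the inputs already supplied by Theorems~\ref{theo:2} and~\ref{theo:3}. Set $g(z):={\bf q}_{\bf N}(z/C_{\bf N})/z=\exp\big({\bf G}_{\bf N}(z/C_{\bf N})/{\bf F}_{\bf N}(z/C_{\bf N})\big)$. Then $g$ is holomorphic on $\mathbb{C}\setminus[1,+\infty)$ by Theorem~\ref{theo:3}$(i)$ (note that $z=0$ is a removable singularity, since ${\bf q}_{\bf N}(w)=w+\cdots$); it is bounded on the upper half-plane and tends to $0$ at $\infty$ there, by Theorems~\ref{theo:2}$(iii)$ and~\ref{theo:3}$(iii)$; and it is real on $(-\infty,1)$ by Schwarz reflection, because ${\bf F}_{\bf N}$ and ${\bf G}_{\bf N}$ have real Taylor coefficients. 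Given these properties, the Herglotz--Nevanlinna (Kac--Krein) characterisation of Cauchy transforms of positive measures on a half-line shows that the displayed representation holds \emph{if and only if} $g$ maps the upper half-plane into $\overline{\mathbb{C}^+}$; and, since $\operatorname{Im}g$ is then a bounded harmonic function on $\mathbb{C}^+$ vanishing on $(-\infty,1)$, the Poisson representation reduces this further to non-negativity of its boundary values on $[1,+\infty)$. Tracing the substitution $z\mapsto z/C_{\bf N}$ back, one arrives at the following clean equivalent form of the conjecture:
\begin{equation*}
\operatorname{Im}{\bf q}_{\bf N}(x+i0)\ge 0\qquad\text{for all real }x>1/C_{\bf N},
\end{equation*}
where ${\bf q}_{\bf N}(x+i0)$ is the boundary value of the analytic continuation of ${\bf q}_{\bf N}$ from the upper half-plane (which exists by Theorem~\ref{theo:3}$(i)$). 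As a sanity check, for $\mathbf N=(2)$, where ${\bf q}_{(2)}(w)=\big(1-\sqrt{1-4w}\big)^2/(4w)$, one computes $\operatorname{Im}{\bf q}_{(2)}(x+i0)=\sqrt{4x-1}/(2x)>0$ for $x>\tfrac{1}{4}$.

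\emph{Proving the boundary inequality.} Here I would argue according to the size of $\Phi_{\bf N}$. For $\Phi_{\bf N}\le 3$ one can use the modular description of ${\bf z}_{\bf N}$ recalled in Theorem~\ref{theo:5}, and, when $\Phi_{\bf N}=2$, the closed form $\frac{d}{dz}\log{\bf q}_{\bf N}(z)=\big(z(1-C_{\bf N}z)\,{\bf F}_{\bf N}(z)^2\big)^{-1}$, which follows from Abel's identity for the Wronskian of the second-order operator ${\bf L}$, combined with the Kaluza-type control of ${\bf F}_{\bf N}$ used in the proof of Theorem~\ref{theo:1} (Lemma~\ref{lem:2a}). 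For general $\Phi_{\bf N}$ one is reduced to understanding ${\bf G}_{\bf N}/{\bf F}_{\bf N}$ as (essentially) one projective coordinate of the period map of ${\bf L}$ along the interval $(1/C_{\bf N},+\infty)$, using the explicit local exponents of ${\bf L}$ at $0$, $1/C_{\bf N}$ and $\infty$, the hypergeometric monodromy (cf.\ \cite{BeHeAA}), the P\'olya-type control of the zeros of ${\bf F}_{\bf N}$ that enters the proof of Theorem~\ref{theo:3}, and the uniform bound $\vert{\bf q}_{\bf N}(w)\vert\le 1$ on $\{\vert w\vert\le 1/C_{\bf N}\}$ from Theorem~\ref{theo:2}$(iii)$.

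\emph{The main obstacle.} I expect this last point, for $\Phi_{\bf N}\ge 3$, to be the genuine difficulty. When $\Phi_{\bf N}\ge 3$ the operator ${\bf L}$ has order $\ge 3$, so the period map takes values in $\mathbb{P}^{\Phi_{\bf N}-1}$ rather than in $\mathbb{P}^1$, the image of $\mathbb{C}^+$ is no longer a curvilinear triangle, and there is no soft conformal-mapping argument forcing a single coordinate of it into a half-plane; one genuinely has to exploit the arithmetic of the exponents $r_{i,j}/N_j$. That the truth is delicate is already signalled by Conjecture~\ref{conj:2}: for $\Phi_{\bf N}=4$ the inverse ${\bf z}_{\bf N}(q)$ has a \emph{positive} coefficient of $q^3$ against an otherwise negative sign pattern, precisely the kind of small anomaly one would not expect if ${\bf q}_{\bf N}$ were transparently of Stieltjes type. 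A complete proof will most likely require either a direct reading of the density of the representing measure off the hypergeometric monodromy (by Stieltjes--Perron inversion this density is, up to normalisation, $\operatorname{Im}{\bf q}_{\bf N}$ across its cut, so this is again the same inequality), or a substantial strengthening of the Kaluza-type argument behind Theorem~\ref{theo:1}.
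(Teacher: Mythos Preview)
The statement you are addressing is labelled as a \emph{conjecture} in the paper and is presented there as open; the paper contains no proof of it. What the paper does, immediately after stating Conjecture~\ref{conj:5}, is exactly your first reduction: it observes that the conjecture amounts, via Hausdorff's theorem, to the Taylor coefficients of ${\bf q}_{\bf N}$ forming a moment sequence, which would yield the integral representation ${\bf q}_{\bf N}(z)=z\int_0^{C_{\bf N}}\frac{d\chi(t)}{1-zt}$, and then records the consequence (via Wirths' theorem) for univalence of ${\bf q}_{\bf N}$ in a half-plane. So your complete-monotonicity reformulation and the target integral representation align precisely with the paper's own discussion; there is no ``paper's proof'' to compare against.

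Your further reduction to the boundary inequality $\operatorname{Im}{\bf q}_{\bf N}(x+i0)\ge 0$ for $x>1/C_{\bf N}$ via a Herglotz--Nevanlinna argument is a genuine step beyond what the paper records, and it is a sensible reformulation. One point that needs more care: you assert that $g(z)={\bf q}_{\bf N}(z/C_{\bf N})/z$ is bounded on the upper half-plane by citing Theorems~\ref{theo:2}$(iii)$ and~\ref{theo:3}$(iii)$, but the former only controls ${\bf q}_{\bf N}$ on the closed disc $\vert z\vert\le 1/C_{\bf N}$ and the latter at $\infty$; the behaviour along and near the cut $(1/C_{\bf N},\infty)$ from above is not covered by those statements, and would have to be extracted separately from the hypergeometric continuation in Section~\ref{sec:analytic}.

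That said, you correctly identify the essential obstacle yourself: for $\Phi_{\bf N}\ge3$ the boundary inequality is not established, and you offer only heuristics for an attack. This is an honest assessment---what you have is a coherent reformulation and strategy, not a proof, and the paper is in the same position.
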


Computer calculations indicate that, actually, the coefficients of
$z^m$ in  $(C_{\mathbf N}z-1)^n{\bf q}_{\bf N}(z)$ are alternating for
$m\le n+1$ until the sign stabilises as described in the conjecture. 
That it must stabilise eventually follows from the singular expansion
for  ${\bf q}_{\bf N}(z)$ given in Lemma~\ref{lem:exp1/C} 
together with 
the standard theorems of singularity analysis (see 
\cite[Ch.~VI]{FlSeAA}).
So the point here is that the sign stabilises {\it already} for
$m\ge n+1$.

By a classical theorem of Hausdorff \cite{HausAA},
Conjecture~\ref{conj:5} implies that the sequence of coefficients of 
${\bf q}_{\bf N}(z)$ is a moment sequence for a finite measure
$d\chi(t)$ on
$(0,C_{\mathbf N})$. As a consequence, ${\bf q}_{\bf N}(z)$ could be
written in the form
$$
{\bf q}_{\bf N}(z)=z\int _{0} ^{C_{\mathbf N}}\frac {\dd\chi(t)}
{1-zt}.
$$
By a theorem of Wirths \cite{WirtAA}, it would then follow that
${\bf q}_{\bf N}(z)$ is univalent in the open half plane
$\text{Re}(z)<1/C_{\mathbf N}$. In particular, 
this would lead to the following strengthening of
Proposition~\ref{prop:1}: 
For all integers $N_1,N_2, \ldots, N_k\ge2$ with
  $\Phi_{\mathbf{N}}\ge 4$,
the series ${\bf z}_{\bf N}(q)$ can be analytically continued to a 
domain that contains the open segment 
$(-\exp(-\pi\cot(\pi/M_{\mathbf N})),{\bf q}_{\bf
  N}(1/C_{\mathbf N}))$.
Moreover, this domain can be chosen so that it contains as
well a right slit neighbourhood of ${\bf q}_{\bf
  N}(1/C_{\mathbf N})$ and a left slit neighbourhood of
$-\exp(-\pi\cot(\pi/M_{\mathbf N}))$ {(}where the angle of the slit
can be chosen arbitrarily small; cf.\ the last paragraph in
Section~\ref{sec:prop}{)}.

\medskip
From now on, in order to simplify notation, we let $C:=C_{\bf N}$,
$\Phi:=\Phi_{\bf N}$, and $M:=M_{\bf N}$.

\section{Proof of Theorem~\ref{theo:1}}
\label{sec:pos}

A formal power series $f(z)=\sum_{n=0}^{\infty} f_n z^n\in\mathbb{C}[[z]]$ with 
$f_0\neq 0$ is invertible 
in $\mathbb{C}[[z]]$. We can thus see  $\widehat{f}(z)=1-1/f(z)$ 
as a formal power series. 
For the proof of Theorem~\ref{theo:1}, we need the following auxiliary
result, whose proof can be found further down in this section.

\begin{lem} \label{lem:1} Let $f(z)=\sum_{n=0}^{\infty} f_nz^n$, $f_0=1$, 
be such that the Taylor coefficients 
of $\widehat{f}(z)$ are non-negative. Let us consider
$g(z)=\sum_{n=0}^{\infty} h_nf_nz^n$ where the sequence of real numbers
$(h_n)_{n \ge 0}$ is non-decreasing and non-negative. 
Then the  Taylor coefficients of $g(z)/f(z)$ are non-negative.

If, in addition, all Taylor coefficients of $f$ and $\widehat f$ are
positive {\em(}except the constant coefficient of $\widehat f${\em)} 
and the sequence $(h_n)_{n \ge 0}$ is strictly increasing,
then the  Taylor coefficients of $g(z)/f(z)$ are positive, except
the constant coefficient if $h_0=0$.
\end{lem}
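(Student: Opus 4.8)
The plan is to reduce everything to a statement about non-negativity (resp.\ positivity) of Taylor coefficients, and then to exploit the hypothesis that $\widehat f = 1 - 1/f$ has non-negative coefficients. Write $f(z)=\sum_{n\ge0}f_nz^n$ with $f_0=1$, and set $u(z):=\widehat f(z)=1-1/f(z)=\sum_{n\ge1}u_nz^n$, so that $1/f(z)=1-u(z)$ and $f(z)=1/(1-u(z))=\sum_{j\ge0}u(z)^j$. The quantity we must control is
\begin{equation*}
\frac{g(z)}{f(z)}=g(z)\,(1-u(z))=\sum_{n\ge0}h_nf_nz^n-u(z)\sum_{n\ge0}h_nf_nz^n.
\end{equation*}
First I would compute the coefficient of $z^n$ in $g(z)/f(z)$ explicitly. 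Since $f=1/(1-u)$, we have $f_n=\sum_{j\ge0}[z^n]u(z)^j$; the key observation is the convolution identity $f_n=\sum_{k\ge1}u_kf_{n-k}$ for $n\ge1$ (this is just $f=1+uf$). Using this, the coefficient of $z^n$ ($n\ge1$) in $g(z)/f(z)=g(z)-u(z)g(z)$ is
\begin{equation*}
h_nf_n-\sum_{k=1}^{n}u_k\,h_{n-k}f_{n-k}
=\sum_{k=1}^{n}u_k f_{n-k}\,(h_n-h_{n-k}),
\end{equation*}
where in the last step I substituted $h_nf_n=h_n\sum_{k=1}^n u_kf_{n-k}$ and combined.

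This formula is the crux of the argument. Every $u_k$ is non-negative by hypothesis; every $f_{n-k}$ is non-negative (indeed $f_0=1$ and the $f_n$ are non-negative because $f=\sum_j u^j$ is a sum of series with non-negative coefficients); and $h_n-h_{n-k}\ge0$ for $1\le k\le n$ because $(h_n)$ is non-decreasing. Hence each summand is non-negative, and therefore $[z^n]\bigl(g(z)/f(z)\bigr)\ge0$ for all $n\ge1$; the constant term is $h_0f_0=h_0\ge0$. This proves the first assertion.

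For the second (positivity) assertion, assume in addition that all $f_n$ and all $u_n$ are positive for $n\ge1$ and that $(h_n)$ is strictly increasing. Then for $n\ge1$, in the sum $\sum_{k=1}^n u_kf_{n-k}(h_n-h_{n-k})$, take the term $k=n$: it equals $u_n f_0 (h_n-h_0)=u_n(h_n-h_0)>0$ since $u_n>0$ and $h_n>h_0$ (strict increase). All other terms are still $\ge0$, so the whole coefficient is strictly positive; and the constant term is $h_0$, which is positive unless $h_0=0$. I do not expect a genuine obstacle here: the only point needing a little care is justifying the rearrangement $f=1+uf$ and the convolution identity purely formally in $\mathbb{C}[[z]]$ (which is immediate, since $f\cdot(1-u)=1$ by definition of $u$), and confirming that the $f_n$ are non-negative from $\widehat f$ having non-negative coefficients — which follows from $f=\sum_{j\ge0}\widehat f(z)^j$ as a well-defined power series because $\widehat f(0)=0$.
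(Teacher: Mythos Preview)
Your proof is correct and follows essentially the same approach as the paper's: both compute the $n$-th coefficient of $g/f=g(1-\widehat f)$ as $h_nf_n-\sum_{k=1}^n\widehat f_kf_{n-k}h_{n-k}$ and then use the convolution identity $f_n=\sum_{k=1}^n\widehat f_kf_{n-k}$ (for $n\ge1$) together with monotonicity of $(h_n)$. Your rewriting of the result as $\sum_{k=1}^n\widehat f_kf_{n-k}(h_n-h_{n-k})$ is just an explicit regrouping of the paper's inequality chain, and your verification that $f_n\ge0$ via $f=\sum_{j\ge0}\widehat f^{\,j}$ fills in a step the paper uses tacitly.
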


To apply Lemma~\ref{lem:1} to our situation, we need two further results.
The first is a theorem due to Kaluza \cite[Satz~3]{kaluza}. Strictly
speaking, the assertion in Satz~3 in \cite{kaluza} deals only with the
condition $f_{n+1}f_{n-1}\ge f_n^2$; however, the strengthening
given below is easily extracted from the proof in \cite{kaluza}. 

\begin{lem} \label{lem:2a}
Let $f(z)=\sum_{n=0}^{\infty} f_nz^n$, $f_0=1$, 
be such that $f_1>0$ and $f_{n+1}f_{n-1}\ge f_n^2$ for all $n\ge1$. Then 
the Taylor coefficients 
of $\widehat{f}(z)$ are non-negative.

If the stronger condition $f_{n+1}f_{n-1}> f_n^2$ is satisfied for all
$n\ge1$, then the Taylor coefficients 
of $\widehat{f}(z)$ are positive {\em(}except the constant coefficient{\em)}.
\end{lem}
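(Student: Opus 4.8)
The plan is to establish the self-contained statement that the formal power series $\widehat f(z)=1-1/f(z)$ has non-negative Taylor coefficients --- its constant coefficient being $0$ because $f_0=1$ --- and that all its nonconstant coefficients are strictly positive under the strict hypothesis. First I would note that the hypotheses force $f_n>0$ for every $n$: if $m\ge2$ were the least index with $f_m\le0$, then $f_0,\dots,f_{m-1}>0$ together with $f_mf_{m-2}\ge f_{m-1}^2>0$ would give $f_m>0$, a contradiction. Hence the ratios $r_n:=f_n/f_{n-1}$ are well defined and positive, $r_1=f_1$, and the hypothesis $f_{n+1}f_{n-1}\ge f_n^2$ is precisely the statement that $(r_n)_{n\ge1}$ is non-decreasing (strictly increasing under the strict hypothesis).

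The heart of the matter is a recursion relating $\widehat f$ to the shifted power series $g(z):=\sum_{n\ge0}(f_{n+1}/f_1)\,z^n$, which again satisfies the hypotheses of the lemma (one has $g_0=1$, $g_1=r_2>0$, and the ratios of $g$ are $r_2,r_3,\dots$). A short computation using the obvious identity $f(z)=1+f_1z\,g(z)$ and the relation $1/g(z)=1-\widehat g(z)$ yields
\begin{equation*}
\widehat f(z)=\frac{f_1z}{1+f_1z-\widehat g(z)}=\frac{f_1z}{1-D(z)},\qquad\text{where }D(z):=\widehat g(z)-f_1z.
\end{equation*}
The series $D$ has zero constant term, its coefficient of $z$ equals $r_2-r_1\ge0$ by log-convexity, and all of its higher coefficients coincide with those of $\widehat g$. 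Iterating this identity produces the formal continued-fraction expansion
\begin{equation*}
\widehat f(z)=\cfrac{r_1z}{1+r_1z-\cfrac{r_2z}{1+r_2z-\cfrac{r_3z}{1+r_3z-\cdots}}},
\end{equation*}
and positivity of the coefficients of $\widehat f$ reflects the fact that each successive unwinding of this expansion preserves non-negativity precisely because $r_k\le r_{k+1}$.

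To make this rigorous I would induct on $n$, proving that for every $f$ satisfying the hypotheses the coefficients of $z^1,\dots,z^n$ in $\widehat f$ are non-negative. The base case $n=1$ is $[z^1]\widehat f=f_1>0$. For the inductive step, apply the induction hypothesis to $g$: this makes the coefficients of $z^1,\dots,z^n$ in $\widehat g$, and hence also in $D$ (those of order $\ge2$ equal the corresponding ones of $\widehat g$, that of order $1$ equals $r_2-r_1\ge0$), non-negative; since moreover $D(0)=0$, the series $\sum_{m\ge0}D(z)^m$ has non-negative coefficients through order $n$, and multiplying by $f_1z$ with $f_1>0$ shows that the coefficients of $z^1,\dots,z^{n+1}$ in $\widehat f$ are non-negative. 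The strict statement is proved by the very same induction with strict inequalities throughout: $r_2>r_1$ forces $[z^1]D>0$, and for $j\ge2$ one has $[z^j]\widehat f\ge f_1\,[z^{j-1}]D>0$ --- using $[z^1]D>0$ when $j=2$, and the strict induction hypothesis applied to $g$ when $j\ge3$ --- so that every nonconstant coefficient of $\widehat f$ is positive.

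I expect the only real idea to be finding this reformulation. A direct induction on the convolution recursion $[z^n]\widehat f=f_n-\sum_{j=1}^{n-1}f_j\,[z^{n-j}]\widehat f$ does not visibly close; passing to the shifted series $g$, on the other hand, converts the problem into the manifestly positivity-preserving operation $D\mapsto f_1z/(1-D)$, and it is exactly the monotonicity $r_2\ge r_1$, which is the whole content of the log-convexity hypothesis, that keeps $D=\widehat g-f_1z$ with a non-negative coefficient of $z$, so that the induction propagates. The remaining steps are routine manipulations of formal power series.
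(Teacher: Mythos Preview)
Your argument is correct. The paper does not supply its own proof of this lemma; it simply cites Kaluza's original paper \cite{kaluza} and remarks that the strict version can be read off from that proof. Kaluza's 1928 argument is a direct induction on $n$ along the convolution recursion $a_n=f_n-\sum_{j=1}^{n-1}f_{n-j}a_j$, carried out with a strengthened inductive hypothesis that keeps track of how large the partial sums can get relative to $f_n$; the log-convexity enters at each step to close the estimate.

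Your route is genuinely different: by passing to the shifted series $g(z)=\sum_{n\ge0}(f_{n+1}/f_1)z^n$ you obtain the clean identity $\widehat f(z)=f_1z/(1-D(z))$ with $D=\widehat g-f_1z$, and then run the induction uniformly over all admissible $f$. The only place the hypothesis is used is the single inequality $r_2\ge r_1$ ensuring $[z^1]D\ge0$; the rest is the manifestly positivity-preserving map $D\mapsto f_1z\sum_{m\ge0}D^m$. This has the pleasant side effect of exhibiting the continued-fraction structure of $\widehat f$, and the strict version falls out with no extra work. Both proofs ultimately hinge on the monotonicity of the ratios $r_n=f_n/f_{n-1}$; yours packages it more transparently.
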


(It is easy to see that, if $\widehat{f}(z)$ has non-negative 
Taylor coefficients, then $f_n\ge 0$ for all $n\ge0$, but the converse 
is not true in general.) 

\begin{lem}\label{lem:2} 
Let us fix the 
integers $N_1,N_2, \ldots, N_k\ge 2$. 
We 
have  $\mathbf{B}_{\mathbf{N}}(1)>0$ and $$
\mathbf{B}_{\mathbf{N}}(m+1)
\mathbf{B}_{\mathbf{N}}(m-1)> \mathbf{B}_{\mathbf{N}}(m)^2
$$ 
for all $m\ge 1$. Furthermore, the sequence 
$({\bf H}_{\bf N}(m))_{m\ge 0}$ is positive and 
strictly increasing. 
\end{lem}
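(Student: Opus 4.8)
The statement splits into two independent parts: a log-concavity-type inequality for $\mathbf{B}_{\mathbf N}(m)$, and positivity plus strict monotonicity of $\mathbf{H}_{\mathbf N}(m)$. I would treat them separately.

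For the first part, I would reduce everything to a single Pochhammer ratio. Since $\mathbf{B}_{\mathbf N}(m)=\prod_{j=1}^k\mathbf{B}_{N_j}(m)$ and $\mathbf{B}_{N_j}(m)=C_{N_j}^m\prod_{i=1}^{\varphi(N_j)}(r_{i,j}/N_j)_m/m!$, the quantity $\mathbf{B}_{\mathbf N}(m)$ is, up to the positive factor $C_{\mathbf N}^m$, a product of terms of the form $(\al)_m/m!$ with $\al=r_{i,j}/N_j\in(0,1]$. The key observation is that for each such factor, $a_m:=(\al)_m/m!=\binom{m+\al-1}{m}$ satisfies $a_{m+1}a_{m-1}\ge a_m^2$; indeed $a_{m+1}/a_m=(m+\al)/(m+1)$ is non-decreasing in $m$ (its derivative in $m$ has the sign of $1-\al\ge0$), which is exactly log-concavity of the ratio, i.e.\ log-convexity of $(a_m)$. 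A product of log-convex sequences is log-convex, and multiplying by the geometric factor $C_{\mathbf N}^m$ preserves this. So $\mathbf{B}_{\mathbf N}(m+1)\mathbf{B}_{\mathbf N}(m-1)\ge\mathbf{B}_{\mathbf N}(m)^2$ with equality only if equality holds in every factor for every $m$, which forces $\al=1$ throughout. But $\al=r_{i,j}/N_j=1$ only when $N_j=1$, which is excluded ($N_j\ge2$), so at least one factor (with $\al=r_{i,j}/N_j<1$) makes the ratio strictly increasing, and we get the strict inequality for all $m\ge1$. The positivity $\mathbf{B}_{\mathbf N}(1)>0$ is immediate since all factors $r_{i,j}/N_j$ and $C_{\mathbf N}$ are positive.

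For the second part, note $\mathbf{H}_{\mathbf N}(m)=\sum_{j=1}^k\mathbf{H}_{N_j}(m)$, so it suffices to show each $\mathbf{H}_{N}(m)$ is positive and strictly increasing for $N\ge2$. We have the telescoping-type formula $\mathbf{H}_N(m)=\sum_{j=1}^{\varphi(N)}H(r_j/N,m)-\varphi(N)H(1,m)$ where $H(x,m)=\sum_{n=0}^{m-1}1/(x+n)$. I would compute the forward difference: $\mathbf{H}_N(m+1)-\mathbf{H}_N(m)=\sum_{j=1}^{\varphi(N)}\bigl(\frac{1}{r_j/N+m}-\frac{1}{1+m}\bigr)$. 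Since the $r_j$ run over residues coprime to $N$ in $\{1,\dots,N\}$, for $N\ge3$ they come in pairs $r\leftrightarrow N-r$ (and $r=N$ never occurs while $r$ can equal small values), so $\sum_j r_j/N=\varphi(N)/2<\varphi(N)$ term by term is not quite the right comparison — rather, by convexity of $x\mapsto 1/(x+m)$ on $(0,\infty)$ together with $\frac1{\varphi(N)}\sum_j r_j/N=\tfrac12<1$, Jensen gives $\frac1{\varphi(N)}\sum_j\frac{1}{r_j/N+m}>\frac{1}{1/2+m}>\frac{1}{1+m}$, so each difference is strictly positive; hence $\mathbf{H}_N$ is strictly increasing. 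For $N=2$ one checks directly ($\varphi(2)=1$, $r_1=1$, giving $\mathbf{H}_2(m)=\sum_{n=0}^{m-1}\bigl(\frac1{1/2+n}-\frac1{1+n}\bigr)>0$ and increasing). Positivity then follows from $\mathbf{H}_N(0)=0$ together with strict increase (so $\mathbf{H}_N(m)>0$ for $m\ge1$); summing over $j$ finishes the claim for $\mathbf{H}_{\mathbf N}$.

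**Main obstacle.** The only genuinely delicate point is getting the \emph{strict} inequality in the log-concavity statement uniformly for all $m\ge1$ rather than just $\ge$; the resolution is the remark above that $N_j\ge2$ guarantees some Pochhammer parameter $r_{i,j}/N_j$ is strictly less than $1$, so that single factor already has a strictly increasing ratio $a_{m+1}/a_m$, and a product in which one factor is strictly log-convex and the rest are log-convex is strictly log-convex. Everything else is bookkeeping with the definitions and elementary convexity.
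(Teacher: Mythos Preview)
Your proposal is correct and follows essentially the same strategy as the paper: for $\mathbf B_{\mathbf N}$ you reduce to single Pochhammer factors and show the ratio of consecutive terms is increasing, and for $\mathbf H_{\mathbf N}$ you compute the forward difference and show it is positive.

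One minor comparison worth noting. For the $\mathbf H_{\mathbf N}$ part you invoke Jensen's inequality applied to the convex function $x\mapsto 1/(x+m)$ together with $\frac{1}{\varphi(N)}\sum_j r_j/N=\tfrac12$. This works, but the paper's argument is simpler: since each $r_{i,j}/N_j<1$ (because $N_j\ge2$), every individual summand satisfies $\frac{1}{r_{i,j}/N_j+m}-\frac{1}{1+m}>0$, so the positivity of the forward difference is immediate term by term, with no need for convexity or the pairing $r\leftrightarrow N-r$, and no separate treatment of $N=2$. Conversely, your treatment of the $\mathbf B_{\mathbf N}$ part, working directly with $a_m=(\alpha)_m/m!$ and observing that $a_{m+1}/a_m=(m+\alpha)/(m+1)$ is strictly increasing when $\alpha<1$, is arguably cleaner than the paper's display, which writes the ratio purely in terms of $(x)_{m+1}(x)_{m-1}/(x)_m^2$ and does not explicitly track the $m!$ contribution.
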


The proof of this lemma can be found at the end of this section.
The combination of Lemmas~\ref{lem:2a} and \ref{lem:2} immediately implies 
the following corollary.

\begin{coro} \label{cor:1}
Let $N_1,N_2, \ldots, N_k$ be positive integers, all at least $2$.
Then the Taylor coefficients of $\widehat{\bf F}_{\mathbf{N}}(z)$ are 
positive {\em(}except the constant coefficient{\em)}. 
\end{coro}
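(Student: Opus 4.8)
The plan is to deduce Corollary~\ref{cor:1} by checking that $\mathbf F_{\mathbf N}(z)=\sum_{m\ge0}\mathbf B_{\mathbf N}(m)z^m$ meets the hypotheses of Lemma~\ref{lem:2a} in its strong form. Writing $f_m=\mathbf B_{\mathbf N}(m)$, we have $f_0=\mathbf B_{\mathbf N}(0)=\prod_j\mathbf B_{N_j}(0)=1$ since each empty-product/$0!$ normalisation gives $1$. So the normalisation $f_0=1$ required by Lemma~\ref{lem:2a} holds automatically.

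Next I would invoke Lemma~\ref{lem:2}, which is exactly tailored to supply the remaining two inputs: it asserts $\mathbf B_{\mathbf N}(1)>0$, i.e.\ $f_1>0$, and it asserts the strict log-concavity inequality $\mathbf B_{\mathbf N}(m+1)\mathbf B_{\mathbf N}(m-1)>\mathbf B_{\mathbf N}(m)^2$ for all $m\ge1$, i.e.\ $f_{m+1}f_{m-1}>f_m^2$. These are precisely the hypotheses of the strengthened conclusion of Lemma~\ref{lem:2a}. Applying that strengthened conclusion to $f=\mathbf F_{\mathbf N}$ yields that all Taylor coefficients of $\widehat{\mathbf F}_{\mathbf N}(z)=1-1/\mathbf F_{\mathbf N}(z)$ are positive, except the constant coefficient (which is $0$ because $f_0=1$). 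That is the assertion of Corollary~\ref{cor:1}.

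There is essentially no obstacle here: the corollary is a formal consequence of the two preceding lemmas, and the only thing to verify independently is the trivial normalisation $\mathbf B_{\mathbf N}(0)=1$. (Strictly, one should note that Lemma~\ref{lem:2} is stated only for $N_1,\dots,N_k\ge2$, which matches the hypothesis of the corollary and the standing assumption of the paper; the exceptional case $\mathbf N=(2)$ is included, since there $\mathbf F_{(2)}(z)=(1-4z)^{-1/2}$, $f_m=\binom{2m}{m}4^{-m}\cdot4^m=\binom{2m}{m}$ up to the normalisation, and the log-concavity of central binomial coefficients is classical, consistent with Lemma~\ref{lem:2}.) Thus the proof of Corollary~\ref{cor:1} is simply: combine Lemma~\ref{lem:2a} and Lemma~\ref{lem:2}, using $\mathbf B_{\mathbf N}(0)=1$.
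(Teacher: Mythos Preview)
Your proposal is correct and follows exactly the paper's approach: the corollary is stated there as an immediate consequence of combining Lemma~\ref{lem:2a} (Kaluza) with Lemma~\ref{lem:2}, which is precisely what you do. One terminological slip: the inequality $f_{m+1}f_{m-1}>f_m^2$ is strict log-\emph{convexity}, not log-concavity.
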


The above corollary shows that we can take 
$f={\bf F}_{\mathbf{N}}$ and $g={\bf G}_{\mathbf{N}}$ 
in Lemma~\ref{lem:1}.  Hence, the power series 
${\bf G}_{\mathbf{N}}(z)/{\bf F}_{\mathbf{N}}(z)$ 
has positive Taylor 
coefficients (except the constant coefficient), 
a property which obviously remains true when we take the 
exponential. This proves Theorem~\ref{theo:1}.

\begin{proof}[Proof of Lemma~\ref{lem:1}]
Let us write 
$
\widehat{f}(z)=\sum_{n=1}^{\infty} \widehat{f}_n z^n.
$
The relation $f(z)\big(1-\widehat{f}(z)\big)=1$ translates into
\begin{equation}\label{eq:fn}
f_n-\sum_{k=1}^{n} \widehat{f}_{k}f_{n-k}=\delta_{n,0},
\quad \text {for all $n\ge 0$},
\end{equation}
where $\delta_{n,0}$ is the Kronecker symbol. 
Furthermore, we have
$$
\frac{g(z)}{f(z)}=g(z)\big(1-\widehat{f}(z)\big)= \sum_{n=0}^{\infty} 
z^n\big(h_nf_n-\sum_{k=1}^{n} \widehat{f}_kf_{n-k}h_{n-k} \big).
$$

Since $(h_n)_{n \ge 0}$ is non-decreasing and non-negative, 
by~\eqref{eq:fn} we have
\begin{equation} \label{eq:ineq} 
h_nf_n-\sum_{k=1}^{n} \widehat{f}_kf_{n-k}h_{n-k} 
\ge h_nf_n-\sum_{k=1}^{n} \widehat{f}_kf_{n-k}h_{n} 
=h_n\Big(f_n-\sum_{k=1}^{n} \widehat{f}_kf_{n-k} \Big) =h_n\delta_{n,0}\ge 0.
\end{equation}

The additional assertion in the lemma follows as well from the above
arguments by observing that, because of the stronger assumptions, the
first inequality in \eqref{eq:ineq} is strict for $n\ge1$.
\end{proof}

\begin{proof}[Proof of Lemma~\ref{lem:2}]
It is clear that $\mathbf{B}_{\mathbf{N}}(m)>0$ for all $m \ge 0$.

We want to prove that
\begin{equation}\label{eq:B1}
\frac{\mathbf{B}_{\mathbf{N}}(m+1)
\mathbf{B}_{\mathbf{N}}(m-1)}
{\mathbf{B}_{\mathbf{N}}(m)^2}> 1
\end{equation}
for all $m\ge 1$. By definition, we have 
$$
\frac{\mathbf{B}_{\mathbf{N}}(m+1)
\mathbf{B}_{\mathbf{N}}(m-1)}
{\mathbf{B}_{\mathbf{N}}(m)^2} =
\prod_{j=1}^k\prod_{i=1}^{\varphi(N_j)} 
\frac{(r_{i,j}/N)_{m+1}(r_{i,j}/N)_{m-1}}{(r_{i,j}/N)_{m}^2}.
$$ 
We observe that for every real number $x>0$ and every integer $m\ge 1$, 
we have
$$
\frac{(x)_{m+1}(x)_{m-1}}{(x)_{m}^2} = \frac{x+m}{x+m-1}>1,
$$
which immediately implies~\eqref{eq:B1}.

\medskip

Concerning the second claim, we have 
\begin{align*}
\mathbf{H}_{\bf N}(m) &= \sum_{j=1}^{k}\bigg(
\sum_{i=1}^{\varphi(N_j)} H(r_{i,j}/N,m) - \varphi(N_j)H(1,m)\bigg)
\\
& =\sum_{j=1}^{k} \sum_{i=1}^{\varphi(N_j)} \Big(H(r_{i,j}/N_j,m) -H(1,m)\Big),
\end{align*}
from which we deduce that
$$
\mathbf{H}_{\bf N}(m+1)-\mathbf{H}_{\bf N}(m) = \sum_{j=1}^{k} 
\sum_{i=1}^{\varphi(N_j)} \left(\frac{1}{m+r_{i,j}/N_j}-\frac{1}{m+1}\right)> 0,
$$
because $0<r_{i,j}/N_j <1$. Since $\mathbf{H}_{\bf N}(0)=0$, we have 
proved that the 
sequence $(\mathbf{H}_{\bf N}(m))_{m\ge1}$ is positive and strictly increasing.
\end{proof}

\section{Analytic continuation of ${\bf F}_{\bf N}(z)$ and ${\bf G}_{\bf N}(z)$}
\label{sec:analytic}

For $z$ complex with $\vert \arg(-z)\vert <\pi$, let $\Log(z)$ denote
the branch of the logarithm which assigns values with imaginary part
between $0$ and $2\pi$. We shall continue to use $\log(\,.\,)$ for the
principal branch of the logarithm. In order to facilitate the reading
of the following paragraphs, as rule of thumb, below, whenever there
appears $z^\ga$, it has to be understood as $z^\ga=\exp(\ga\Log(z))$,
whereas whenever there
appears $(-z)^\ga$, it has to be understood as
$(-z)^\ga=\exp(\ga\log(-z))$.

For a real number $h \ge 0$, set 
\begin{equation} \label{eq:hyper2} 
F(h,z):=\sum_{n=0}^{\infty} \frac{(\al_1+h)_n\cdots (\al_\Phi+h)_n}{(1+h)_n^\Phi} z^{n+h},
\end{equation}
where $\Phi=\Phi_{\bf N}$ is given by \eqref{eq:Phi} 
and the $\al$'s run through the elements of 
the multiset~{(}\footnote{A multiset 
is a ``set'' where one allows repetitions of elements.}{)}  
$$\{r_{i,j}/N_j : i=1, \ldots, \varphi(N_j), j=1,\ldots,k\}.$$
We have 
\begin{equation}\label{eq:FG}
F(0,Cz)={\bf F}_{\bf N}(z) \quad \textup{and} \quad  
\frac{\partial F}{\partial h} (0,Cz) = \Log(Cz){\bf F}_{\bf N}(z) +{\bf G}_{\bf N}(z). 
\end{equation}
Generalised hypergeometric functions, such as the sum on the
right-hand side of \eqref{eq:hyper2} have a Barnes-type integral representation,
see \cite[Sec.~4.6]{SlatAC}. If we apply this to the right-hand side of
\eqref{eq:hyper2} then, for 
any complex number $z$ such that 
$\vert \arg(-z)\vert <\pi$ and any $h\ge 0$, we obtain
\begin{multline} \label{eq:sumint}
\sum_{n=0}^{\infty} \frac{(\al_1+h)_n\cdots (\al_\Phi+h)_n}{(1+h)_n^\Phi} z^{n}=
\\
-\frac{1}{2i\pi}\frac{\Gamma(1+h)^\Phi}{\Gamma(\al_1+h)\cdots \Gamma(\al_\Phi+h)}
\int_\mathcal{C} \frac{\Gamma(\al_1+h+s)\cdots \Gamma(\al_\Phi+h+s)}{\Gamma(1+h+s)^\Phi}
 \frac{\pi}{\sin(\pi s)} 
(-z)^{s} \dd s,
\end{multline}
where $\mathcal{C}$ is a path from $-i\infty$ to $+ i \infty$ such that 
$0,1, 2, \ldots$ lie on the right of 
$\mathcal{C}$ and the poles of the $\Gamma(\al_\ell+h+s)$, $\ell=1,
\ldots, \Phi$, lie to the left. 

By multiplying both sides of \eqref{eq:sumint} by
$z^h$, and by using the relation $\Log(z)=\log(-z)+i\pi$ (recall the
convention on the branches of the logarithm that we made in the first
paragraph of this section),
one obtains for $\vert \arg(-z)\vert <\pi$ and $h\ge 0$ the equation
\begin{multline}\label{eq:barnes}
F(h,z)=
\\
-\frac{e^{i\pi h}}{2i\pi}\frac{\Gamma(1+h)^\Phi}{\Gamma(\al_1+h)\cdots \Gamma(\al_\Phi+h)}
\int_\mathcal{C} \frac{\Gamma(\al_1+h+s)\cdots \Gamma(\al_\Phi+h+s)}{\Gamma(1+h+s)^\Phi}
 \frac{\pi}{\sin(\pi s)} 
(-z)^{s+h} \dd s.
\end{multline}
In particular, for $h=0$ and $z$ changed to $Cz$, this provides the analytic continuation 
of ${\bf F}_{\bf N}(z)$ to the cut plane $\vert \arg(-z)\vert <\pi$. Since ${\bf F}_{\bf N}(z)$ 
is also analytic at any 
real point $z \in [0,1/C[$, we get in this way the analytic continuation 
of ${\bf F}_{\bf N}(z)$ to the cut plane $\vert \arg(1/C-z)\vert <\pi$.

We now differentiate both sides of~\eqref{eq:barnes} with respect to $h$, and then set $h=0$ 
and change $z$ to $Cz$. After simplification, 
we get 
\begin{multline}\label{eq:barnes2}
{\bf G}_{\bf N}(z) = \Big(\Phi \psi(1)-\sum_{\ell=1}^\Phi \psi(\al_\ell)\Big){\bf F}_{\bf N}(z)
\\
-\frac{1}{2i\pi\Gamma(\al_1)\cdots \Gamma(\al_\Phi)}
\int_\mathcal{C} \frac{\Gamma(\al_1+s)\cdots \Gamma(\al_\Phi+s)}{\Gamma(1+s)^\Phi} 
\Big(\sum_{\ell=1}^\Phi \psi(\al_\ell+s) - \Phi\psi(1+s)\Big)\frac{\pi(-Cz)^{s}}{\sin(\pi s)} 
\dd s,
\end{multline}
where $\psi$ is the digamma function.
A standard argument shows that the integral on the right-hand side of~\eqref{eq:barnes2} 
is analytic in the cut plane $\vert \arg(-z)\vert <\pi$, hence this is
also the case for ${\bf G}_{\bf N}(z)$.
Then, from the series representation \eqref{eq:GN}, we conclude that, in fact, 
${\bf G}_{\bf N}(z)$ is analytic in the cut 
plane $\vert \arg(1/C-z)\vert <\pi$.

In~\cite[p.~216]{kratrivmirror}, we proved that 
\begin{equation}\label{eq:logC}
\Big(\Phi \psi(1)-\sum_{\ell=1}^\Phi \psi(\al_\ell)\Big) = 
\sum_{j=1}^k \sum_{i=1}^{\varphi(N_j)}\big(\psi(1)-\psi(r_{i,j}/N)\big)
=\log(C).
\end{equation}
This quantity will reappear in the sequel.

We now prove the following result, which will be needed in the proof
of Theorem~\ref{theo:3}.$(iii)$ in Section~\ref{sec:polya}.

\begin{lem}\label{eq:asympG/F}
For integers $N_1,\ldots, N_k\ge 2$, let $M=\max(N_1, \ldots, N_k)$,
as before. Then we have  
$$
\lim_{z\to \infty} \left(\Log(z)+\frac{{\bf G}_{\bf N}(z)}{{\bf F}_{\bf N}(z)}\right) 
=-\pi\cot(\pi/M)+i\pi,
$$
where the limit has
to be performed along a path that avoids the cut $[0,+\infty)$.
Here, ${\bf F}_{\bf N}(z)$ and ${\bf G}_{\bf N}(z)$ are given by their
analytic continuations discussed just above, while $\Log(z)$ denotes
the branch of the logarithm described at the beginning of this section.
\end{lem}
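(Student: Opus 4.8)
The plan is to extract the limiting behaviour of $\Log(z)+{\bf G}_{\bf N}(z)/{\bf F}_{\bf N}(z)$ from the Barnes-integral representations \eqref{eq:barnes} and \eqref{eq:barnes2} by pushing the contour $\mathcal C$ to the left across the poles of the integrand and analysing which residues dominate as $z\to\infty$. Recall that $\Log(z)+{\bf G}_{\bf N}(z)/{\bf F}_{\bf N}(z) = \big(\partial F/\partial h\big)(0,Cz)\,/\,F(0,Cz)$ by \eqref{eq:FG}; so what I really need is the leading asymptotic term of $F(h,z)$ and of $\partial F/\partial h\,(h,z)$ near $h=0$ as $z\to\infty$ along a path avoiding $[0,+\infty)$. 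In the integral \eqref{eq:barnes} the factor $(-z)^{s+h}$ grows like a power of $|z|$ whose exponent is the real part of $s+h$; pushing $\mathcal C$ leftward, the dominant contribution comes from the rightmost pole(s) of $\Gamma(\al_1+h+s)\cdots\Gamma(\al_\Phi+h+s)/\big(\Gamma(1+h+s)^\Phi\sin(\pi s)\big)$ to the left of $\mathcal C$, i.e.\ from $s=-\min_\ell \al_\ell - h + O(\cdot)$. Since the $\al_\ell$ range over $\{r_{i,j}/N_j\}$ and the smallest such value is $1/M$ (attained when $r=1$ and $N_j=M$), the governing pole sits at $s=-1/M$ (for $h=0$), and its multiplicity equals the number $\mu$ of indices with $\al_\ell=1/M$, which is the number of $j$ with $N_j=M$ (each contributing the residue class $r=1$).

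First I would make this precise: fix a small $\de>0$, move $\mathcal C$ to the vertical line $\Re s=-1/M-\de$ (legitimate since between the old and new contour the only poles crossed are those at $s=-\al_\ell$, $\ell$ with $\al_\ell=1/M$, for $h$ small), and write $F(h,z)$ as (sum of residues at $s=-1/M-h$ or the perturbed poles) plus a remainder integral that is $o\big(|z|^{(-1/M-\de)+h}\big)$. Because the pole at $s=-1/M$ has order $\mu\ge1$ — here I must be a bit careful, as $\Gamma(1+s)^{-\Phi}$ and $\Gamma(\al_\ell+s)$ for the other $\ell$'s are regular there, while $\sin(\pi s)$ is nonzero at $s=-1/M$ unless $1/M$ is an integer, which it is not for $M\ge2$ — the residue produces a term of the shape $(-z)^{-1/M+h}\cdot P_{h}(\log(-z))$, where $P_h$ is a polynomial in $\log(-z)$ of degree $\mu-1$ whose coefficients depend analytically on $h$. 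Collecting the analogous expansion for $\partial F/\partial h$, which one obtains simply by differentiating the residue expression in $h$, and then taking the ratio and letting $h=0$, the powers $(-z)^{-1/M}$ cancel, and in the ratio the top-degree term in $\log(-z)$ survives: one is left with $\partial_h\log\big[(-z)^{h}\,c(h)\big]\big|_{h=0}$ plus lower-order contributions that vanish as $z\to\infty$. The $h$-derivative of the $(-z)^{h}$ factor contributes $\log(-z)=\Log(z)-i\pi$, which is exactly what is needed to cancel the leading $\Log(z)$ that was separated off in \eqref{eq:FG}; what remains is the $h$-derivative of the constant $c(h)$ collecting the Gamma-factors and the $\sin$-factor evaluated at the governing pole, namely a combination of $\psi$-values and of $\pi\cot(\pi s)$ evaluated at $s=-1/M$ (the latter coming from differentiating $\pi/\sin(\pi s)$ against $s$ when the pole location shifts, or more cleanly from the prefactor $e^{i\pi h}$ together with the $(-z)^{s+h}$ dependence). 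Tracking these constants, one should obtain exactly $-\pi\cot(\pi/M)+i\pi$.

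The cleanest way to organise the constant-chasing, and the step I would actually carry out in detail, is: (1) show $\lim_{z\to\infty} (-z)^{1/M}(\log(-z))^{-(\mu-1)}{\bf F}_{\bf N}(Cz)/C^{\,\cdot}$ exists and is a nonzero constant $A$ — this also reproves that $1/C$ is genuinely where things break and that the relevant pole is $s=-1/M$; (2) differentiate the residue formula in $h$ and show $\big(\partial F/\partial h\big)(0,Cz)$ has leading term $A\,(-z)^{-1/M}(\log(-z))^{\mu-1}\big(\log(-z)+B\big)$ for a constant $B$; (3) divide, getting $\Log(z)+{\bf G}_{\bf N}/{\bf F}_{\bf N}\to \big(\log(-z)+B\big)-\log(-z)+\text{(error}\to0) + i\pi$ after reinstating $\Log=\log(-z)+i\pi$, i.e.\ the limit is $B+i\pi$; (4) identify $B=-\pi\cot(\pi/M)$ by evaluating the combination of $\psi$ and $\cot$ that the explicit residue computation throws up — here the reflection formula $\psi(1-x)-\psi(x)=\pi\cot(\pi x)$ and formula \eqref{eq:logC} are the tools, and the multiplicity-$\mu$ bookkeeping mostly cancels between numerator and denominator so that only the single governing residue class $r=1$, $N_j=M$ contributes the $\cot(\pi/M)$.

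The main obstacle, and where I expect to spend the most care, is the case $\mu\ge2$ (several $N_j$ equal to $M$): then the governing pole has order $\ge2$, the residues involve derivatives of products of $\Gamma$'s and of $\pi/\sin(\pi s)$ at $s=-1/M$, and one must check that after forming the ratio $\big(\partial_h F\big)/F$ and letting $h\to0$ all the messy subleading polynomial-in-$\log(-z)$ terms really do cancel, leaving only the clean constant. A convenient device to sidestep much of this is to note that $F(h,z)$ itself is, up to elementary factors, a ${}_\Phi F_{\Phi-1}$ whose connection/asymptotic behaviour at $z=\infty$ is classical (cf.\ the monodromy discussion the paper invokes for Theorem~\ref{theo:3}), so that $F(h,z)\sim$ (explicit Gamma-quotient)$\cdot(-z)^{-\al^*-h}$ with $\al^*=1/M$ up to logarithmic factors, and then differentiate that asymptotic relation in $h$ — the interchange of $\partial_h$ and the $z\to\infty$ limit being justified by the uniformity of the Barnes-integral estimate in $h$ near $0$. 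One must also handle the genuinely exceptional case $\mathbf N=(2)$ separately, as flagged in the excerpt after \eqref{eq:Phi=1}, but there \eqref{eq:Phi=1} gives ${\bf q}_{(2)}(z)$ in closed form and the limit $-\exp(-\pi\cot(\pi/2))=-\exp(0)\cdot\dots$ — wait, $\cot(\pi/2)=0$, so the claimed limit is $-1$ — is checked directly from $(1-\sqrt{1-4z})^2/(4z)\to -1$ as $z\to\infty$ along a path avoiding $[1/4,\infty)$, consistent with the lemma. This closed-form check is a useful sanity test for the sign of the $i\pi$ and for the normalisation of $\Log$.
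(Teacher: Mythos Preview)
Your approach is essentially the same as the paper's: shift the Barnes contour \eqref{eq:barnes} to the left, pick up the residues at the poles of the product of Gamma functions, identify the dominant one at $s=-1/M$ (with multiplicity equal to the number of $N_j$ equal to $M$), and then compute the ratio $\big(\partial F/\partial h\big)(0,Cz)/F(0,Cz)$ from the leading terms. The paper carries this out explicitly for $k=1$ (your $\mu=1$ case), obtaining \eqref{eq:Fh}--\eqref{eq:DFh}, and then sketches the general case just as you do, noting that the logarithmic factors from higher-order poles cancel in the ratio.

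One small bookkeeping correction: from \eqref{eq:FG} one has $\big(\partial F/\partial h\big)(0,Cz)/F(0,Cz)=\Log(Cz)+{\bf G}_{\bf N}(z)/{\bf F}_{\bf N}(z)$, not $\Log(z)+{\bf G}_{\bf N}/{\bf F}_{\bf N}$; the extra $\log(C)$ is precisely what \eqref{eq:logC} supplies when you differentiate the Gamma-prefactor in $h$, so your step~(4) will absorb it. Also, when you take the residue at $s=-\al_\ell-h$ the exponent in $(-z)^{s+h}$ becomes $(-z)^{-\al_\ell}$, independent of $h$, so the $\log(-z)$ you need does \emph{not} come from differentiating $(-z)^h$; it comes entirely from the $h$-dependence of the prefactor $e^{i\pi h}\pi/\sin(\pi(\al_\ell+h))\cdot\Gamma(1+h)^\Phi/\prod\Gamma(\al_j+h)$, whose logarithmic derivative at $h=0$ is $i\pi-\pi\cot(\pi/M)+\log(C)$ by \eqref{eq:logC}. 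With that adjustment your outline goes through and matches the paper's argument.
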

\begin{proof} By a well-known method, the integral~\eqref{eq:barnes}
enables us to 
obtain an alternative expression for $F(h,z)$ for $\vert z\vert>1/C$ in the
cut plane 
$\vert \arg(1/C-z)\vert<\pi$: we shift the contour $\mathcal{C}$ to the left, taking into account 
the various poles of the integrand coming from the product 
$\Gamma(\al_1+s)\cdots \Gamma(\al_\Phi+s)$. 

Let us start with the case $k=1$, in which case $\al_j=r_j/N$. Then,
for $\vert z\vert>1/C$ and $\vert \arg(1/C-z)\vert<\pi$, we have
\begin{equation} \label{eq:Fh} 
F(h,z)=
e^{i\pi h}\sum_{j=1}^{\varphi(N)} \frac{\pi\Gamma(1+h)^{\varphi(N)}}
{\sin\Big(\pi(\frac{r_j}N+h)\Big)\prod_{\ell=1}^{\varphi(N)}\Gamma(\frac{r_\ell}{N}+h)} 
\sum_{\ell =0}^{\infty}
\frac{\prod_{\ell=1, \ell\neq j}^{\varphi(N)}\Gamma(\frac{r_\ell-r_j}{N}-\ell )}
{\ell !\,\Gamma(1-\frac{r_j}{N}-\ell )^{\varphi(N)}} (-z)^{-\ell -r_j/N},
\end{equation}
and similarly 
\begin{multline} \label{eq:DFh}
\frac{\partial F}{\partial h}(h,z)
\\
=
\sum_{j=1}^{\varphi(N)}\frac{\partial }{\partial h}\bigg(e^{i\pi h}
\frac{\pi\Gamma(1+h)^{\varphi(N)}}{\sin\Big(\pi(\frac{r_j}N+h)\Big)
\prod_{\ell=1}^{\varphi(N)}\Gamma(\frac{r_\ell}{N}+h)} 
\bigg)  \sum_{\ell =0}^{\infty}
\frac{\prod_{\ell=1, \ell\neq j}^{\varphi(N)}\Gamma(\frac{r_\ell-r_j}{N}-\ell )}
{\ell !\,\Gamma(1-\frac{r_j}{N}-\ell )^{\varphi(N)}} (-z)^{-\ell -r_j/N}.
\end{multline}
In both cases, the leading term 
is the one corresponding to $(-z)^{-1/N}$, and thus
$$
\lim_{z\to\infty} 
\frac{\frac{\partial F}{\partial h}(0,z)}{F(0,z)} = \log(C)-\pi \cot(\pi/N)+i\pi.
$$
(Again, we use~\eqref{eq:logC} to get the value $\log(C)$.) Using~\eqref{eq:FG}, the lemma follows 
in this case.

In the general case, it can be much more complicated to compute precisely the expansions 
because the poles might have 
multiplicity (i.e., some of the $\al$'s might be equal or differ by an integer). The expected 
expansions are linear forms in the functions
$$(-z)^{-\al_\ell}\log^j(-z)g_{\ell,j}(1/z), \quad 
j=0, \ldots, \beta_\ell-1,$$ 
with coefficients that depend on 
$h$. Here $\beta_\ell$ is the multiplicity of $\al_\ell$, and the $g_{\ell,j}(z)$'s are holomorphic 
functions at $z=0$. The main term in the expansions of 
${\bf F}_{\bf N}(z)$ and 
${\bf G}_{\bf N}(z)+\Log(Cz){\bf F}_{\bf N}(z)$ are those corresponding to 
$(-z)^{-1/M}\log^{\beta-1}(-z)$ (with the same maximal $\beta$ in both cases), 
which can be computed without difficulty from~\eqref{eq:barnes}. We get
again
$$
\lim_{z\to\infty} 
\frac{\frac{\partial F}{\partial h}(0,z)}{F(0,z)} = \log(C)-\pi \cot(\pi/M)+i\pi,
$$
and the lemma follows.
\end{proof}

\section{Singular expansions for $\mathbf q_{\mathbf N}(z)$ and
$\mathbf z_{\mathbf N}(q)$}
\label{sec:aux}

The purpose of this section is to discuss the singular expansions of 
$\mathbf q_{\mathbf N}(z)$ at $z=\infty$
(see Lemma~\ref{lem:expinf}) and $z=1/C_{\mathbf N}$
(see Lemma~\ref{lem:exp1/C}),
and of $\mathbf z_{\mathbf N}(q)$ at the ``corresponding" points
$q=-\exp\big(-\pi\cot(\pi/M_{\mathbf N})\big)$ (recall
\eqref{eq:liminfini}) and $q=\mathbf q_{\mathbf N}(1/C_{\mathbf N})$. 
To obtain the latter, one has to combine Lemma~\ref{lem:expinf} with
Lemmas~\ref{lem:qz2}--\ref{lem:qzess}, respectively 
Lemma~\ref{lem:exp1/C} with Lemma~\ref{lem:qz}.

We start with the singular expansion of 
$\mathbf q_{\mathbf N}(z)$ at $z=\infty$.

\begin{lem} \label{lem:expinf}
Let $N_1,N_2, \ldots, N_k$ be positive integers, all of which at least $2$,
and let $M=M_{\mathbf N}$, as before.
Furthermore, let $R$ be the least number different from $1/M$ in the
multiset
$$\mathfrak R=\{r_{i,j}/N_j:i=1,2,\dots,\ph(N_j),\ j=1,2,\dots,k\}.$$

\begin{enumerate}
\item[$(i)$]  
If both $1/M$ and $R$ appear exactly once in the multiset $\mathfrak R$,
then $\mathbf q_{\mathbf N}(z)$ admits a singular
expansion at $\infty$ of the form
\begin{equation} \label{eq:expi} 
\mathbf q_{\mathbf N}(z)=\mathfrak q_0+\mathfrak q_1(-z)^{-R+\frac {1} {M}}+
\mathcal O\Big((-z)^{-R+\frac {1} {M}-\frac {1} {L}}\log^{B_3-1}(-z)\Big),
\end{equation}
where $\mathfrak q_0=-\exp\big(-\pi\cot(\pi/M)\big)$, 
$\mathfrak q_1$ is a non-zero constant, 
$L=\lcm(N_1,N_2,\dots,\break N_k)$, and
$B_3$ is the multiplicity of the third-smallest element in $\mathfrak R$.

\item[$(ii)$]  
If $1/M$ appears exactly once in the multiset $\mathfrak R$,
while $R$ appears with multiplicity $B_2$,
then $\mathbf q_{\mathbf N}(z)$ admits a singular
expansion at $\infty$ of the form
\begin{equation} \label{eq:expii} 
\mathbf q_{\mathbf N}(z)=\mathfrak q_0+\mathfrak q_1(-z)^{-R+\frac {1}
  {M}}\log^{B_2-1}(-z)+
\mathcal O\Big((-z)^{-R+\frac {1}
  {M}}\log^{B_2-2}(-z)\Big),
\end{equation} 
where $\mathfrak q_0$ and
$\mathfrak q_1$ are the same constants as in $(i)$.
\item[$(iii)$]  
If $1/M$ appears with multiplicity at least $2$ in $\mathfrak R$,
then $\mathbf q_{\mathbf N}(z)$ admits a singular
expansion at $\infty$ of the form
\begin{equation} \label{eq:expiii} 
\mathbf q_{\mathbf N}(z)=\mathfrak q_0+\widetilde{\mathfrak q}_1
  \log^{-1}(-z)+
\mathcal O\big(\log^{-2}(-z)\big),
\end{equation} 
where $\mathfrak q_0$ has the same meaning as in $(i)$,
and $\widetilde{\mathfrak q}_1$ is a non-zero constant.
\end{enumerate}
\end{lem}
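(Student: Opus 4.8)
The strategy is to transfer the known singular expansions of $\mathbf F_{\mathbf N}(z)$ and $\mathbf G_{\mathbf N}(z)+\Log(Cz)\mathbf F_{\mathbf N}(z)$ at $z=\infty$ (obtained by shifting the Barnes contour in \eqref{eq:barnes} to the left, exactly as in the proof of Lemma~\ref{eq:asympG/F}) through the defining relation $\mathbf q_{\mathbf N}(z)=z\exp(\mathbf G_{\mathbf N}(z)/\mathbf F_{\mathbf N}(z))=\exp\!\big(\Log(z)+\mathbf G_{\mathbf N}(z)/\mathbf F_{\mathbf N}(z)\big)$. The first step is to record, from \eqref{eq:barnes} applied with $z\mapsto Cz$, that for $|z|>1/C$ in the cut plane both $\mathbf F_{\mathbf N}(z)$ and $\mathbf G_{\mathbf N}(z)$ have asymptotic expansions in descending powers $(-z)^{-\al_\ell}$ times powers of $\log(-z)$, the latter having exponent at most $\beta_\ell-1$ where $\beta_\ell$ is the multiplicity of $\al_\ell$ in $\mathfrak R$. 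The smallest exponent $\al_\ell$ is $1/M$ (which is always present since $M=M_{\mathbf N}$ is the largest modulus and $1$ is coprime to it), so the leading behaviour is governed by $1/M$ and the next term by $R$; by Lemma~\ref{eq:asympG/F} the constant term of $\Log(z)+\mathbf G_{\mathbf N}(z)/\mathbf F_{\mathbf N}(z)$ is $-\pi\cot(\pi/M)+i\pi$, whence $\mathfrak q_0=\exp(-\pi\cot(\pi/M)+i\pi)=-\exp(-\pi\cot(\pi/M))$ in all three cases.

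The second step is to compute $\mathbf G_{\mathbf N}(z)/\mathbf F_{\mathbf N}(z)$ to one further order in each case. Writing $\mathbf F_{\mathbf N}(z)=c(-z)^{-1/M}\log^{\beta-1}(-z)\big(1+\cdots\big)$ and likewise for $\mathbf G_{\mathbf N}(z)+\Log(Cz)\mathbf F_{\mathbf N}(z)$ with the \emph{same} dominant term $(-z)^{-1/M}\log^{\beta-1}(-z)$ (here $\beta$ is the multiplicity of $1/M$ in $\mathfrak R$), the quotient is $\Log(Cz)+$(constant)$+$(correction), and the size of the correction is what distinguishes the three cases. In case $(i)$, $\beta=1$ and $R$ has multiplicity $1$, so the next-order contribution to the numerator and denominator is of relative size $(-z)^{-R+1/M}$, giving a correction term of that order in $\mathbf G_{\mathbf N}/\mathbf F_{\mathbf N}$, and the subsequent term is controlled by the third-smallest exponent with its logarithmic power, i.e.\ of order $(-z)^{-R+1/M-1/L}\log^{B_3-1}(-z)$ once one absorbs all exponents into the common refinement $1/L$. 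In case $(ii)$, $\beta=1$ but $R$ has multiplicity $B_2$, so the correction carries a factor $\log^{B_2-1}(-z)$ and the error one power of $\log$ less. In case $(iii)$, $\beta\ge2$: here the ratio of the two $(-z)^{-1/M}$-terms is $\log^{\beta-1}(-z)(1+O(1/\log(-z)))$ over $\log^{\beta-1}(-z)(1+O(1/\log(-z)))$, so after dividing, the subleading term in $\mathbf G_{\mathbf N}/\mathbf F_{\mathbf N}$ beyond $\Log(Cz)+$const is of order $\log^{-1}(-z)$, giving \eqref{eq:expiii}.

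The third step is purely formal: exponentiate. Since $\mathbf q_{\mathbf N}(z)=\mathfrak q_0\exp\!\big(\delta(z)\big)$ where $\delta(z)\to 0$ is the correction term computed above, one has $\mathbf q_{\mathbf N}(z)=\mathfrak q_0\big(1+\delta(z)+O(\delta(z)^2)\big)$, and $\delta(z)^2$ is of strictly smaller order than the error terms already present in each case (e.g.\ $(-z)^{-2R+2/M}$ is dominated by $(-z)^{-R+1/M-1/L}\log^{B_3-1}(-z)$ because $R-1/M\ge 1/L$). This gives the stated forms with $\mathfrak q_1=\mathfrak q_0\cdot(\text{coefficient of the correction})$, nonzero because the corresponding Barnes residue is nonzero, and $\widetilde{\mathfrak q}_1$ similarly. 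One must also check $\mathfrak q_1\ne 0$: this follows because the residue of the integrand in \eqref{eq:barnes} at $s=-R$ does not vanish (the Gamma factors are finite and nonzero there, and $\pi/\sin(\pi s)$ too), and the leading coefficients of $\mathbf F$ and $\mathbf G+\Log(Cz)\mathbf F$ at order $(-z)^{-R}$ cannot conspire to cancel in the quotient — this is the one place requiring a small explicit argument.

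\textbf{Main obstacle.} The genuinely delicate point is the bookkeeping in case $(i)$ for the error term $\mathcal O\big((-z)^{-R+1/M-1/L}\log^{B_3-1}(-z)\big)$: one must verify that \emph{every} exponent $\al_\ell$ appearing in the Barnes expansion lies in $\frac{1}{L}\mathbb Z$, so that the gap between the exponent $R$ and the next relevant exponent is at least $1/L$, and that the logarithmic powers arising from multiplicities and from products of lower-order terms in the numerator and denominator never exceed $\log^{B_3-1}(-z)$ at that order. This is a finite but somewhat intricate combinatorial check on the structure of the pole contributions of $\Gamma(\al_1+s)\cdots\Gamma(\al_\Phi+s)/\Gamma(1+s)^\Phi$; I would isolate it as a separate sublemma on the shape of the full asymptotic expansion of $F(h,z)$ at $\infty$, stated once and invoked in all three cases. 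The special case $\mathbf N=(2)$ (where $\mathbf F_{(2)}(z)=(1-4z)^{-1/2}$ and the Barnes machinery does not literally apply) is handled directly from \eqref{eq:Phi=1}, and falls under $(i)$ with $M=2$, $R=1/2$, $\mathfrak q_0=-1$.
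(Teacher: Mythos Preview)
Your approach is essentially identical to the paper's: both expand $F(0,Cz)$ and $\frac{\partial F}{\partial h}(0,Cz)$ (equivalently $\mathbf F_{\mathbf N}$ and $\mathbf G_{\mathbf N}+\Log(Cz)\mathbf F_{\mathbf N}$, via \eqref{eq:FG}) at $z=\infty$ by shifting the Barnes contour to the left, form the quotient, and exponentiate; the paper likewise leaves the non-cancellation $G_2/G_1\ne F_2/F_1$ (your $\mathfrak q_1\ne0$) as ``not difficult to see from the explicit expressions.'' One minor slip in your closing remark: for $\mathbf N=(2)$ the multiset $\mathfrak R=\{1/2\}$ has a single element, so $R$ is undefined and the lemma does not literally cover this case; the paper treats $\mathbf N=(2)$ separately via the closed form \eqref{eq:Phi=1} throughout.
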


\begin{Remark}
If the multiset $\mathfrak R$ is in fact a set, i.e., if all elements
of $\mathfrak R$ appear with multiplicity~$1$, then $\mathbf
q_{\mathbf N}(z)$ admits a Puiseux expansion in $(-z)^{-1/L}$, of
which \eqref{eq:expi} shows the first terms. In all other cases, the
singular expansion at $z=\infty$ has terms containing $\log(-z)$.
\end{Remark}

\begin{proof}[Proof of Lemma~\ref{lem:expinf}]
$(i)$
By \eqref{eq:FG} and (in case
$B_3\ge2$: the appropriately generalised) expansions
\eqref{eq:Fh} and \eqref{eq:DFh}, we know that 
\begin{equation} \label{eq:qexp1000} 
\mathbf q_{\mathbf N}(z)=\frac {1} {C}\exp\bigg(\frac {\frac {\partial F}
  {\partial h}(0,Cz)} {F(0,Cz)}\bigg),
\end{equation}
where 
\begin{equation} \label{eq:FB3} 
F(0,Cz)=F_1(-Cz)^{-\frac {1} {M}}+F_2(-Cz)^{- {R}}+\mathcal O\big((-z)^{-
  {R}-\frac {1} {L}}\log^{B_3-1}(-z)\big)
\end{equation}
and
\begin{equation} \label{eq:DFB3} 
{\frac {\partial F} {\partial h}} (0,Cz)=
G_1(-Cz)^{-\frac {1} {M}}+G_2(-Cz)^{- {R}}+\mathcal O
\big((-z)^{- {R}-\frac {1} {L}}\log^{B_3-1}(-z)\big).
\end{equation}
Here, $F_1$, $F_2$, $G_1$, $G_2$ are explicit non-zero constants.
If we use this in \eqref{eq:qexp1000}, then we
obtain 
$$
\mathbf q_{\mathbf N}(z)=\frac {1} {C}\exp\left(
\frac {G_1} {F_1}\left(1+\left(\frac {G_2} {G_1}-\frac {F_2}
    {F_1}\right)(-Cz)^{-R+\frac {1} {M}}+\mathcal O
\big((-z)^{- {R}+\frac {1} {M}-\frac {1} {L}}\log^{B_3-1}(-z)\big)
\right)
\right).
$$
From the explicit expressions and \eqref{eq:logC}, 
it is not difficult to see that
$G_1/F_1=\log(C)-\pi\cot(\pi/M)+i\pi$ and that, furthermore, 
$\frac {G_2} {G_1}\ne\frac {F_2} {F_1}$.
The assertions in $(i)$ now
follow easily upon expansion of the exponential.

\smallskip
$(ii)$ We proceed in the same way as in $(i)$. Here, the expansions 
\eqref{eq:FB3} and \eqref{eq:DFB3} must be replaced by
$$
F(0,Cz)=F_1(-Cz)^{-\frac {1} {M}}+F_2(-Cz)^{-
  {R}}\log^{B_2-1}(-z)+\mathcal O\Big((-z)^{- 
  {R}}\log^{B_2-2}(-z)\Big) 
$$
and
$$
{\frac {\partial F} {\partial h}} (0,Cz)=
G_1(-Cz)^{-\frac {1} {M}}+G_2(-Cz)^{- {R}}\log^{B_2-1}(-z)+\mathcal O
\Big((-z)^{- {R}}\log^{B_2-2}(-z)\Big).
$$
The constants $F_1,F_2,G_1,G_2$ are the same as in $(i)$. The
remaining steps are completely analogous to those in $(i)$ and are
therefore omitted.

\smallskip
$(iii)$
Again, we proceed in the same way as in $(i)$. Here, the expansions 
\eqref{eq:FB3} and \eqref{eq:DFB3} must be replaced by
\begin{multline*}
F(0,Cz)=F_1(-Cz)^{-\frac {1} {M}}\log^{B_1-1}(-z)+\widetilde F_2(-Cz)^{-
\frac 1  {M}}\log^{B_1-2}(-z)\\+\mathcal O\Big((-z)^{- 
  \frac 1{M}}\log^{B_2-3}(-z)\Big) 
\end{multline*}
and
\begin{multline*}
{\frac {\partial F} {\partial h}} (0,Cz)=
G_1(-Cz)^{-\frac {1} {M}}\log^{B_1-1}(-z)+
\widetilde G_2(-Cz)^{- \frac1{M}}\log^{B_1-2}(-z)\\+\mathcal O
\Big((-z)^{-\frac 1 {M}}\log^{B_1-3}(-z)\Big).
\end{multline*}
The constants $F_1,F_2$ are the same as in $(i)$. The
remaining steps are completely analogous to those in $(i)$ and are
therefore omitted.
\end{proof}

The next lemma addresses the singular expansion of 
$\mathbf q_{\mathbf N}(z)$ at $z=1/C_{\mathbf N}$.

\begin{lem} \label{lem:exp1/C}
Let $N_1,N_2, \ldots, N_k$ be positive integers, all of which at least $2$,
and let $C=C_{\mathbf N}$, $\Phi=\Phi_{\mathbf N}$, as before.

\begin{enumerate}
\item[$(i)$]  
If $\Phi=2$,
then $\mathbf q_{\mathbf N}(z)$ admits a singular
expansion at $1/C$ of the form
\begin{equation} \label{eq:1/C2} 
\mathbf q_{\mathbf N}(z)=1+{\sf q}_1\log^{-1}(1-Cz)+
\mathcal O\big(\log^{-2}(1-Cz)\big),
\end{equation}
where ${\sf q}_1>0$.

\item[$(ii)$]  
If $\Phi\ge3$ is odd,
then $\mathbf q_{\mathbf N}(z)$ admits a singular
expansion at $1/C$ of the form
\begin{multline} \label{eq:1/Cimpair} 
\mathbf q_{\mathbf
  N}(z)=\mathbf q_{\mathbf
  N}(1/C)+{\sf q}_1(1-Cz)+{\sf q}_2(1-Cz)^2+\cdots\\
+{\sf q}_d(1-Cz)^d+
{\sf q}_{d+\frac {1} {2}}(1-Cz)^{d+\frac {1} {2}}
+\mathcal O\big((1-Cz)^{d+1}\big),
\end{multline}
where $d=\frac {\Phi-3} {2}$, 
${\sf q}_1<0$, and $(-1)^{d+1}{\sf q}_{d+\frac {1} {2}}>0$.

\item[$(iii)$]  
If $\Phi\ge4$ is even,
then $\mathbf q_{\mathbf N}(z)$ admits a singular
expansion at $1/C$ of the form
\begin{multline} \label{eq:1/Cpair} 
\mathbf q_{\mathbf
  N}(z)=\mathbf q_{\mathbf
  N}(1/C)+{\sf q}_1(1-Cz)+{\sf q}_2(1-Cz)^2+\cdots\\
+{\sf q}_{d-1}(1-Cz)^{d-1}+
{\sf q}_{d+}(1-Cz)^{d}\log(1-Cz)
+\mathcal O\big((1-Cz)^{d}\big),
\end{multline}
where $d=\frac {\Phi-2} {2}$, 
${\sf q}_1<0$, and $(-1)^{d+1}{\sf q}_{d+}>0$.
\end{enumerate}
\end{lem}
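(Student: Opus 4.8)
The plan is to derive the singular expansions of $\mathbf q_{\mathbf N}(z)$ at $z=1/C$ from the known singular behaviour of ${\bf F}_{\bf N}(z)$ and ${\bf G}_{\bf N}(z)$ near $1/C$, exactly in the same spirit as the proof of Lemma~\ref{lem:expinf}, where the expansion at $\infty$ was obtained by substituting the local expansions of $F(0,Cz)$ and $\frac{\partial F}{\partial h}(0,Cz)$ into the representation $\mathbf q_{\mathbf N}(z)=\frac1C\exp\bigl(\frac{\partial F}{\partial h}(0,Cz)/F(0,Cz)\bigr)$ (which is \eqref{eq:qexp1000}, valid also near $1/C$ by \eqref{eq:FG}). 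The point $1/C$ is the single finite singularity inherited from the hypergeometric operator ${\bf L}$; near it the solutions ${\bf F}_{\bf N}(z)$ and ${\bf G}_{\bf N}(z)+\Log(Cz){\bf F}_{\bf N}(z)$ have a classical local structure governed by the local exponents of ${\bf L}$ at $z=1/C$. For a generalised hypergeometric operator of the form \eqref{eq:equadiff}, the local exponents at the finite singular point are $0,1,2,\dots,\Phi-2$ together with a further exponent $\mu=\frac12(\Phi - \sum_{i,j} r_{i,j}/N_j \cdot \text{(something)})$; in fact, because the $r_{i,j}/N_j$ come in pairs summing to $1$ (the residues coprime to $N_j$ are symmetric about $N_j/2$), one gets $\sum \alpha_\ell = \Phi/2$, so the exceptional exponent is $\frac{\Phi}{2}-1 - (\Phi-2) \cdot 0$… — more precisely the exceptional local exponent equals $\frac{\Phi}{2}-1$ when $\Phi$ is even (an integer, producing a logarithm) and a half-integer when $\Phi$ is odd. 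This is why the three cases ($\Phi=2$; $\Phi$ odd; $\Phi$ even $\ge 4$) arise.

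\textbf{Step 1.} Establish the local structure of ${\bf F}_{\bf N}(z)$ and of $\Lambda(z):={\bf G}_{\bf N}(z)+\Log(Cz){\bf F}_{\bf N}(z)$ near $z=1/C$. For this I would again shift the Barnes contour in \eqref{eq:barnes} — this time not to pick up the expansion at $\infty$ but to exhibit the behaviour at $1/C$ — or, more cleanly, invoke the standard connection formulae for ${}_{\Phi-1}F_{\Phi-2}$ at its finite singular point (see \cite{SlatAC, BeHeAA}). Both functions extend analytically past $1/C$ along the two sides of the cut, and near $1/C$ one has
$$
{\bf F}_{\bf N}(z)= A(z)+ (1-Cz)^{\mu} B(z)\cdot\big(\text{$\log(1-Cz)$ if $\mu\in\mathbb Z$}\big),
$$
with $A,B$ holomorphic at $1/C$, $A(1/C)={\bf F}_{\bf N}(1/C)$ (finite unless one of the five exceptional $\mathbf N$'s, which have $\Phi\le 2$), and $B(1/C)\ne 0$; and similarly for $\Lambda(z)$ with the same exponent $\mu$ and the same holomorphic/logarithmic dichotomy, but with an extra $\log(1-Cz)$ coming from $\Log(Cz)$ when $\Phi=2$. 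The sign of the leading coefficient $B(1/C)$ — the delicate arithmetic input — is extracted from the explicit Gamma-factor in \eqref{eq:barnes}, using that $\sum\alpha_\ell=\Phi/2$ and that the product $\Gamma(\alpha_1)\cdots\Gamma(\alpha_\Phi)$ is positive (all $\alpha_\ell\in(0,1)$).

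\textbf{Step 2.} Form the quotient $\Lambda(z)/{\bf F}_{\bf N}(z) = \Log(Cz)+{\bf G}_{\bf N}(z)/{\bf F}_{\bf N}(z)$ and expand. When $\Phi\ge3$, ${\bf F}_{\bf N}(1/C)$ is a nonzero finite value, so $1/{\bf F}_{\bf N}(z)$ is holomorphic at $1/C$, the ratio is $(\text{holomorphic part})+(1-Cz)^{\mu}(\text{nonzero holomorphic})\cdot(\log$ if needed$)$, and the holomorphic part contributes the integer/half-integer powers $1,(1-Cz),\dots$ in \eqref{eq:1/Cimpair}–\eqref{eq:1/Cpair}. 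Exponentiating via \eqref{eq:qexp1000} and multiplying by $1/C$ then gives $\mathbf q_{\mathbf N}(z)=\mathbf q_{\mathbf N}(1/C)+\cdots$ with the same analytic/logarithmic pattern, since $\exp$ of a holomorphic function is holomorphic and $\exp(c+\varepsilon)=\exp(c)(1+\varepsilon+\cdots)$ turns a term $(1-Cz)^{d+1/2}$ or $(1-Cz)^d\log(1-Cz)$ into a term of the same type with coefficient $\exp(c)$ times the original, $\exp(c)={\bf q}_{\bf N}(1/C)/\!\!\!\!\phantom{x}$… up to the factor $1/C$; here $d=\frac{\Phi-3}2$ (odd case, $\mu=d+\frac12$) or $d=\frac{\Phi-2}2$ (even case, $\mu=d$ producing the $\log$). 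When $\Phi=2$, ${\bf F}_{\bf N}(1/C)$ may diverge and $\mu=0$, so the structure is genuinely different: here $\Log(Cz)$ and ${\bf G}_{\bf N}(z)/{\bf F}_{\bf N}(z)$ combine to give $\text{const}+\text{const}\cdot\log^{-1}(1-Cz)+\cdots$, whence \eqref{eq:1/C2}. The signs ${\sf q}_1>0$ (for $\Phi=2$), ${\sf q}_1<0$ (the coefficient of $(1-Cz)$ for $\Phi\ge3$), and $(-1)^{d+1}{\sf q}_{d+1/2}>0$ resp.\ $(-1)^{d+1}{\sf q}_{d+}>0$ then follow by tracking the sign of $B(1/C)$ and of the derivative of the holomorphic part at $1/C$; the claim ${\sf q}_1<0$ is equivalent to $\frac{d}{dz}\bigl(\Lambda/{\bf F}_{\bf N}\bigr)\big|_{1/C}>0$, i.e.\ to the Taylor coefficients of $\mathbf q_{\mathbf N}(z)$ being positive (Theorem~\ref{theo:1}) combined with the sub-exponential nature of the singularity, and can alternatively be read off directly.

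\textbf{The main obstacle} is Step~1, and within it the bookkeeping of multiplicities together with the determination of the \emph{sign} of the leading nonanalytic coefficient. The exponent $\mu$ being an integer when $\Phi$ is even forces a resonance between the exceptional local exponent and the generic exponents $0,1,\dots,\Phi-2$, so the local expansion of the solutions acquires a logarithm; establishing precisely that the coefficient of the logarithmic term is nonzero (so that $\mathbf q_{\mathbf N}(z)$ really does have the $\log$ term advertised in \eqref{eq:1/Cpair} and not accidentally a cancellation) is the technical heart. I expect to handle this by the same residue-at-a-double-pole computation as in the proof of Lemma~\ref{lem:expinf}$(ii)$–$(iii)$: differentiate the Gamma-quotient in \eqref{eq:barnes} with respect to the contour variable and check, using $\psi$-function identities and \eqref{eq:logC}, that the relevant combination $G_2/G_1-F_2/F_1$ (in the notation of that proof, now localized at $1/C$ rather than $\infty$) is nonzero and has the asserted sign. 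The sign assertions for odd $\Phi$ likewise reduce to the positivity of $\Gamma(\alpha_1)\cdots\Gamma(\alpha_\Phi)\sin(\pi\mu)$-type factors, which is transparent once $\mu=d+\frac12$ is fixed. I would treat the remaining routine steps (exponentiation, reorganizing the $\mathcal O$-terms) briefly, since they parallel Lemma~\ref{lem:expinf} verbatim.
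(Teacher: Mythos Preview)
Your outline is essentially the paper's: compute the local exponents $0,1,\dots,\Phi-2,\ \tfrac{\Phi}{2}-1$ of ${\bf L}$ at $1/C$, write ${\bf F}_{\bf N}$ and ${\bf G}_{\bf N}$ as a holomorphic part plus $(1-Cz)^{\Phi/2-1}L(z)$ times a holomorphic factor (with $L=1$ or $L=-\log(1-Cz)$), form the quotient, exponentiate via \eqref{eq:qexp1000}, and read off the sign of ${\sf q}_1$ from the monotonicity furnished by Theorem~\ref{theo:1}. So far the two arguments coincide.

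The substantive divergence is in how the sign and nonvanishing of the leading singular coefficient are obtained. The paper does \emph{not} shift the Barnes contour (that manoeuvre yields the expansion at $\infty$, not at $z=1$), nor does it compute explicit connection coefficients. Instead it works through the Taylor coefficients: Stirling gives ${\bf B}_{\bf N}(m)\sim(\text{positive const})\,C^m m^{-\Phi/2}$, and the standard transfer between coefficient asymptotics and singular expansions then forces the coefficient $g(1/C)$ of the singular term in ${\bf F}_{\bf N}(z)=f(z)+(1-Cz)^{\Phi/2-1}L(z)g(z)$ to be strictly positive. A second ingredient you do not isolate is that, because ${\bf H}_{\bf N}(m)=\log(C)-\tfrac{\Phi}{2m}+O(m^{-2})$, the combination ${\bf G}_{\bf N}(z)-\log(C){\bf F}_{\bf N}(z)$ has its singular part \emph{one degree higher}, namely $(1-Cz)^{\Phi/2}L(z)\tilde g(z)$, so that
\[
\frac{{\bf G}_{\bf N}(z)}{{\bf F}_{\bf N}(z)}=\log(C)+\frac{\tilde f(z)+(1-Cz)^{\Phi/2}L(z)\tilde g(z)}{f(z)+(1-Cz)^{\Phi/2-1}L(z)g(z)},
\]
and the leading singular term of the quotient then has coefficient $-g(1/C)\tilde f(1/C)/f(1/C)^2$. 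The remaining sign $\tilde f(1/C)<0$ comes from the termwise inequality ${\bf H}_{\bf N}(m)-\log(C)=\sum_{i,j}\big(\psi(m+r_{i,j}/N_j)-\psi(m+1)\big)<0$ (monotonicity of $\psi$) together with Abel's theorem. This package delivers both nonvanishing and sign without any Gamma-quotient bookkeeping; your route via explicit connection formulae would succeed in principle but is heavier, and the Barnes-shift suggestion is a red herring for the behaviour at the finite singular point.
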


\begin{proof}
We proceed by using the theory of hypergeometric differential
equations to determine the form of the singular expansion of
the quotient ${\bf G}_{\bf N}(z)/{\bf F}_{\bf N}(z)$,
see \eqref{eq:G}. 
This is then translated in the final step into the claimed singular
expansions for $\mathbf q_{\mathbf N}(z)$.

First of all, from Section~\ref{sec:analytic}
we know that ${\bf F}_{\bf N}(z)$ and ${\bf G}_{\bf N}(z) + 
\log(az){\bf F}_{\bf N}(z)$ (for any $a\neq 0$) 
 can be analytically continued to 
$\mathbb{C}\setminus[1/C,+\infty)$ and 
$\mathbb{C}\setminus\big((-\infty,0]\cup [1/C,+\infty)\big)$, 
respectively. 
We want to determine their behaviour around the point $z=1/C$. 
The exponents at the regular singular point 
$z=1/C$ of the differential equation ${\bf L}y=0$
(with $\mathbf L$ being defined in \eqref{eq:equadiff}) 
are $0,1,\ldots, \Phi-2$, and 
\begin{equation} \label{eq:expon} 
(\Phi-1)-\sum_{j=1}^k \sum_{i=1}^{\varphi(N_j)} \frac{r_{i,j}}{N_j}. 
\end{equation}
By the elementary identity
\begin{equation} \label{eq:Phi/2}
\sum_{j=1}^k \sum_{i=1}^{\varphi(N_j)} \frac{r_{i,j}}{N_j}=
\frac{\Phi}2, 
\end{equation}
the value \eqref{eq:expon} simplifies to $\frac {\Phi} {2}-1$.
By the theory of hypergeometric differential equations (cf.\
\cite[Ch.~4, Sec.~8]{CoLeAA}), 
a basis over $\mathbb{C}$ of solutions of 
${\bf L}$ consists of $\Phi-1$ functions $f_1(z), \ldots, f_{\Phi-1}(z)$ holomorphic at 
$z=1/C$, together with 
another solution $f_{\Phi}(z)$ which can be described as follows:
\begin{itemize}
\item[a)] if $\Phi$ is odd, then $f_{\Phi}(z)=(1-Cz)^{\Phi/2-1}u(z)$, 
where $u(z)$ is holomorphic at $z=1/C$;
\item[b)] if $\Phi$ is even, then $f_{\Phi}(z)=v(z)+(1-Cz)^{\Phi/2-1}\log(1-Cz)
u(z)$, where both $u(z)$ and $v(z)$ are holomorphic at $z=1/C$.
\end{itemize}

It follows that, in a neighbourhood of $1/C$ avoiding the cut $[1/C,+\infty)$, 
we have~(\footnote{Since the function
${\bf G}_{\bf N}(z)+\log(az){\bf F}_{\bf N}(z)$ (with $a\neq 0$) is a solution 
of ${\bf L}y=0$, it can be written in a form similar to 
the right-hand side of~\eqref{eq:F01}. Application of~\eqref{eq:F01} to 
$\log(az){\bf F}_{\bf N}(z)$ then gives~\eqref{eq:G01} because 
$\log(az)$ is holomorphic at $z=1/C$.})
\begin{align}
{\bf F}_{\bf N}(z) &= f(z) + (1-Cz)^{\Phi/2-1} L(z) g(z) \label{eq:F01}
\\
{\bf G}_{\bf N}(z)&= \breve{f}(z) + (1-Cz)^{\Phi/2-1} L(z) \breve{g}(z), \label{eq:G01}
\end{align}
where $f,\breve{f}, g$ and $\breve{g}$ are holomorphic around $z=1/C$ 
and  $L(z)=1$ if 
$\Phi$ is odd, 
respectively $L(z)=-\log(1-Cz)$ if $\Phi$ is even.

Concerning the coefficients of ${\bf F}_{\bf N}(z)$, by Stirling's formula, we have 
\begin{equation} \label{eq:Basy} 
{\bf B}_{\bf N}(m)= \bigg(
\prod _{j=1} ^{k}
\prod _{i=1} ^{\varphi(N_j)}\frac {1} {\Ga(r_{i,j}/N_j)}\bigg)
\cdot
\frac{C^m}{m^{\Phi/2}}\left(1+o(1)\right),\quad \quad m\to\infty,
\end{equation}
which implies that $g(1/C)> 0$ by the classical link between singularities of an analytic 
function $h$ 
and the asymptotic behaviour of the Taylor coefficients of $h$ when 
these are positive (see \cite[Ch.~VI]{FlSeAA}).

Concerning the coefficients of ${\bf G}_{\bf N}(z)$, we have 
\begin{equation}\label{eq:asympH}
{\bf B}_{\bf N}(m) {\bf H}_{\bf N}(m) = {\bf B}_{\bf N}(m) \bigg(\log(C)
-\frac{\Phi}{2m} + \mathcal{O}\Big(\frac1{m^2}\Big)\bigg).
\end{equation}
(In fact, 
$\log(C)$ appears under the form $\sum_{j=1}^k \sum_{i=1}^{\varphi(N_j)}
\big(\psi(1)-\psi(r_{i,j}/N)\big)$, see \eqref{eq:logC}.)
Hence, using~\eqref{eq:asympH}, we can make~\eqref{eq:G01} more 
precise~(\footnote{This is more precise when one transforms $\log(C){\bf F}_{\bf N}(z)$
using~\eqref{eq:F01}.}): 
\begin{equation}\label{eq:Gprecise}
{\bf G}_{\bf N}(z) = \log(C){\bf F}_{\bf N}(z)
+\widetilde{f}(z) + (1-Cz)^{\Phi/2} L(z) \widetilde{g}(z),
\end{equation}
where $\widetilde{f}$ and $\widetilde{g}$ are holomorphic at $z=1/C$,
and $\widetilde g(1/C)\ne0$. 

In order to proceed, we need the following auxiliary result.

{\leftskip1cm\rightskip1cm\noindent\it
For any vector ${\bf N}$ of positive integers, the limit 
\begin{equation} \label{eq:lem3} 
S:=\lim_{z \to 1/C} \big({\bf G}_{\bf N}(z)-\log(C){\bf F}_{\bf N}(z)\big)
\end{equation}
exists, is finite and is $<0$. Furthermore, it is equal to $\widetilde
f(1/C)$.\par}

Above and in the sequel, the limit $z \to 1/C$ is understood along real numbers $z<1/C$.

In order to see \eqref{eq:lem3}, we observe that,
for $\vert z\vert <1/C$, we have 
$$
{\bf G}_{\bf N}(z)-\log(C){\bf F}_{\bf N}(z) = \sum_{m=0}^{\infty} {\bf B}_{\bf N}(m)  
\big({\bf H}_{\bf N}(m)-\log(C)\big) z^m.
$$
The series on the right-hand side converges for $z=1/C$ because 
$$
\big\vert {\bf B}_{\bf N}(m)  
\big({\bf H}_{\bf N}(m)-\log(C)\big)\big\vert= \mathcal O\left( \frac{C^m}{m^{\Phi/2+1}}
\right)$$
and $\Phi/2+1>1$.  By Abel's theorem, the limit $S$
in \eqref{eq:lem3} exists and 
$$
S=  \sum_{m=0}^{\infty} {\bf B}_{\bf N}(m)  
\big({\bf H}_{\bf N}(m)-\log(C)\big) C^{-m},
$$
the right-hand side being finite.

Secondly, since $H(x,n)=\psi(n+x)-\psi(x)$, it is easy to see that 
$$
{\bf H}_{\bf N}(m)-\log(C) = \sum_{j=1}^k\sum_{i=1}^{\varphi(N_j)} 
\Big(\psi\big(m+ \tfrac{r_{i,j}}{N_j}\big)-\psi(m+1)\Big),
$$
where we used \eqref{eq:logC} again.
Since the function $\psi$ is strictly increasing on $(0, +\infty)$, and 
since $0<\frac{r_{i,j}}N<1$, we deduce that 
\begin{equation} \label{eq:HlogC} 
{\bf H}_{\bf N}(m)-\log(C)<0\quad \text{for all }m\ge 0.
\end{equation}
Hence $S<0$.

Finally, since $\Phi/2>0$, 
Eq.~\eqref{eq:Gprecise} implies that 
$$
\lim_{z\to 1/C} \big({\bf G}_{\bf N}(z) - \log(C){\bf F}_{\bf N}(z)\big) 
= \lim_{z\to 1/C} \big(\widetilde{f}(z) +  (1-Cz)^{\Phi/2} L(z) \widetilde{g}(z) \big) =\widetilde{f}(1/C),
$$
thus completing the proof of \eqref{eq:lem3}.

\medskip
We may now continue with the proof of the lemma.
By the remarks preceding \eqref{eq:lem3}, we have
\begin{equation}\label{eq:G}
\frac{{\bf G}_{\bf N}(z)}{{\bf F}_{\bf N}(z)} = \log(C) + 
\frac{\widetilde{f}(z) + (1-Cz)^{\Phi/2} L(z) \widetilde{g}(z)}{f(z) + (1-Cz)^{\Phi/2-1} L(z) g(z)}.
\end{equation}

This is now translated to ${\bf q}_{\bf N}(z)=
z\exp\big({\bf G}_{\bf N}(z)/{\bf F}_{\bf N}(z)\big)$.
Let us for the moment restrict ourselves to the case $\Phi\ge3$.
It was argued in the paragraph
between \eqref{eq:G01} and \eqref{eq:asympH} that
$g(1/C)>0$. 
Furthermore, by
the definition of $f(z)$ in \eqref{eq:F01}, we have
$f(1/C)={\bf F}_{\bf N}(1/C)\ne0$. Finally, by \eqref{eq:lem3},
we have $\widetilde f(1/C)<0$. 
If we use these observations, together with our
assumption that $\Phi\ge3$, from \eqref{eq:G} we obtain the singular
expansion 
\begin{align} \notag
{\bf q}_{\bf N}(z)&=z\exp\left(\frac {{\bf G}_{\bf N}(z)} {{\bf F}_{\bf
      N}(z)}\right) \\
\label{eq:singexp}
&=
Cz\exp\big(\widetilde f(z)/f(z)\big)
\Big(1+\alpha_1(1-Cz)^{\Phi/2-1}L(z)+\mathcal O\big((1-Cz)^{\Phi/2}L(z)\big)\Big)
\end{align}
for $z\to 1/C$. Here, 
$$
\alpha_1=-\exp\bigg(\frac {\widetilde f(1/C)} {f(1/C)}\bigg)
\frac {g(1/C)\widetilde f(1/C)}
{f^2(1/C)}=
{\bf q}_{\bf N}(1/C)\frac 
{g(1/C)\big({\bf F}_{\bf N}(1/C)\log(C)-{\bf G}_{\bf N}(1/C)\big)}
{{\bf F}^2_{\bf N}(1/C)},
$$ 
which is positive because of \eqref{eq:HlogC} and $g(1/C)>0$.
The singular expansions \eqref{eq:1/Cimpair} and
\eqref{eq:1/Cpair} now follow routinely: the claim on the sign of
${\sf q}_{d+\frac {1} {2}}$, respectively of
${\sf q}_{d+}$, is a direct consequence of $\al_1$ being
positive, while the claim on the sign of ${\sf q}_1$ follows from
the fact that $\mathbf q_{\mathbf N}(z)$ is monotone increasing on the
interval $[0,1/C]$ (which is implied by Theorem~\ref{theo:1}). 

Finally, if $\Phi=2$, then, leaving the details to the reader, the 
singular expansion in this case is
\begin{equation} \label{eq:singexp4}
{\bf q}_{\bf N}(z)
=
1+\frac {\alpha_2} {\log(1-Cz)}+\mathcal O\left(\frac {1} {\log^2(1-Cz)}\right)
\end{equation}
for $z\to 1/C$. 
Here,
$
\alpha_2=- {\widetilde f(1/C)} \big/{g(1/C)},
$ 
which is positive by our earlier observations.
\end{proof}

The remaining lemmas in this section are general results that
describe the singular expansion of a function $z(q)$ at $q=q_0$,
given the singular expansion of its compositional inverse $q(z)$ at
the corresponding point. They are tailor-made for obtaining 
singular expansions of $\mathbf z_{\mathbf N}(q)$ at 
$q=-\exp\big(-\pi\cot(\pi/M)\big)$, respectively at
$q=\mathbf q_{\mathbf N}(1/C)$, by combining the appropriate lemma
with Lemma~\ref{lem:expinf}, respectively with
Lemma~\ref{lem:exp1/C}.

We start with the results which, upon combination with
Lemma~\ref{lem:exp1/C}, imply singular expansions of 
$\mathbf z_{\mathbf N}(q)$ at $q=\mathbf q_{\mathbf N}(1/C)$.
In the statements, we make use of right and left slit
neighbourhoods, notions that have been defined in the paragraph above 
Proposition~\ref{prop:1} in the Introduction.

\begin{lem} \label{lem:qz2}
Suppose we are given a complex function $q(z)$ which is analytic in a 
right slit neighbourhood of $z=z_0$ and
has a singular expansion that begins
\begin{multline} \label{eq:qsing}
q(z)=q_0+q_1(z-z_0)+q_2(z-z_0)^2+\cdots\\+q_d(z-z_0)^d+
q_{d+\frac {1} {2}}(z_0-z)^{d+\frac {1} {2}}
+\mathcal O\big((z-z_0)^{d+1}\big),
\end{multline}
where $d\ge0$, $q_{d+\frac {1} {2}}\ne0$ and, if $d>0$, then also $q_1\ne0$.
Then there exists a local inverse function $z(q)$ which, in a
right slit neighbourhood of $q=q_0$, is analytic and
has a singular expansion that begins
\begin{equation*}
z(q)=z_0+z_2(q-q_0)^2
+\mathcal O\big((q-q_0)^{4}\big)
\end{equation*}
if $d=0$, where $z_{2}=-1/q_{1/2}^2$,
and begins
\begin{multline*}
z(q)=z_0+z_1(q-q_0)+z_2(q-q_0)^2+\cdots\\+z_d(q-q_0)^d+
z_{d+\frac {1} {2}}(q_0-q)^{d+\frac {1} {2}}
+\mathcal O\big((q-q_0)^{d+1}\big)
\end{multline*}
if $d>0$,
where $z_1=1/q_1$ and $z_{d+\frac {1} {2}}=-q_1^{-d-\frac {3} {2}}
q_{d+\frac {1} {2}}$.
\end{lem}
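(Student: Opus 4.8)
The plan is to invert the given singular expansion \eqref{eq:qsing} termwise, treating it as an identity between (multivalued) analytic functions on a right slit neighbourhood of $z_0$. First I would normalise: set $w=z-z_0$ and $v=q-q_0$, so that \eqref{eq:qsing} reads $v=q_1w+\cdots+q_dw^d+q_{d+\frac12}(-w)^{d+\frac12}+\mathcal O(w^{d+1})$. The key structural observation is that on a right slit neighbourhood $\De(z_0;r,\ep)$ the quantity $(-w)^{d+\frac12}$ (equivalently $(z_0-z)^{d+\frac12}$, with the branch taken so that it is real positive for $z<z_0$) is a well-defined analytic function whose square is the single-valued analytic function $w^{2d+1}$; this is what makes the "half-integer" bookkeeping consistent. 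I would then argue that $q(z)$ is, in the variable $t:=(-w)^{1/2}=(z_0-z)^{1/2}$, an honest analytic function of $t$ near $t=0$ (the even powers of $t$ reproduce the holomorphic part, the odd powers starting at $t^{2d+1}$ reproduce the $(-w)^{d+\frac12}$ term and its successors), and similarly we seek $z(q)$ as an analytic function of $s:=(q_0-q)^{1/2}$.

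Next I would split into the two cases. If $d=0$, the expansion is $v=q_{1/2}(-w)^{1/2}+\mathcal O(w)=q_{1/2}t+\mathcal O(t^2)$, so $v$ is an analytic function of $t$ with nonzero linear term; by the analytic inverse function theorem $t$ is an analytic function of $v$ near $0$, hence $w=-t^2$ is analytic in $v$, and reading off coefficients gives $w=-v^2/q_{1/2}^2+\mathcal O(v^3)$; one checks the $\mathcal O(v^3)$ term is in fact $\mathcal O(v^4)$ by a parity/sign argument, or simply by noting the next genuine term involves $s=(-v)^{1/2}$ but in this regime $q<q_0$ corresponds to $s$ real so the relevant expansion variable is $v$ itself — I would phrase this as: the compositional inverse of an even analytic function of $t$ is even, forcing the odd coefficient to vanish. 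If $d>0$, then $q_1\ne0$, so the map $w\mapsto v$ is a biholomorphism in the ordinary (single-valued) sense modulo the $(-w)^{d+\frac12}$ correction; I would write $v=q_1w\bigl(1+\cdots\bigr)$ and invert the leading holomorphic part first to get $w=v/q_1+\cdots$ (holomorphic in $v$), then bootstrap: substitute this approximate inverse back into the correction term to see that the first non-holomorphic contribution to $w(v)$ is $-q_1^{-d-\frac32}q_{d+\frac12}(q_0-q)^{d+\frac12}$, with all earlier terms holomorphic and matching the classical Lagrange-inversion coefficients $z_1=1/q_1$, etc.

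The main obstacle I expect is not the formal coefficient extraction (that is routine Lagrange inversion plus one careful substitution) but rather the \emph{analytic} bookkeeping on the slit domains: one must verify that the local inverse $z(q)$ genuinely exists and is analytic on a right slit neighbourhood of $q_0$ — i.e., that $q(\cdot)$ maps a right slit neighbourhood of $z_0$ onto (a set containing) a right slit neighbourhood of $q_0$, and that the branch of $(q_0-q)^{1/2}$ in which we expand is the one inherited from $(z_0-z)^{1/2}$ under the map. To handle this I would work with the uniformising variable $t=(z_0-z)^{1/2}$, observe that as $z$ ranges over $\De(z_0;r,\ep)$ the variable $t$ ranges over a sector of half-angle slightly less than $\pi/2$ minus $\ep/2$ around the positive real axis (a right half-plane-like region truncated near $0$), note that $v=v(t)$ is analytic and injective there for $r$ small (since $dv/dt\ne0$ at $t=0$ when $d=0$, and since in the case $d>0$ one can absorb the analysis into the variable $w$ where $dv/dw=q_1\ne0$), and conclude by the open mapping theorem together with the fact that $\arg v\approx\arg(q_{1/2}t)$ near $0$ that the image contains a right slit neighbourhood of $q_0$ once $\ep$ is shrunk appropriately. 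The remark that the slit angle can be taken arbitrarily small is then immediate from this argument. Everything else — the explicit formulas for $z_1,z_2,z_{d+\frac12}$ and the error order — follows by matching Taylor/Puiseux coefficients, which I would state and leave the straightforward verification to the reader.
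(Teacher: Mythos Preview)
Your bootstrap strategy for $d>0$ --- invert the holomorphic truncation $q_0+q_1(z-z_0)+\cdots+q_d(z-z_0)^d$ first, then substitute the approximate inverse back into \eqref{eq:qsing} to read off the coefficient of $(q_0-q)^{d+\frac12}$ --- is exactly what the paper does (citing De~Bruijn's asymptotics text for the mechanics). The paper in fact only writes out the case $d>0$ and leaves $d=0$ implicit, so your uniformising-variable treatment via $t=(z_0-z)^{1/2}$ and the open-mapping discussion for the slit-domain images add detail the paper does not spell out.

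One point in your $d=0$ argument is wrong, however: the parity claim you use to upgrade the error from $\mathcal O(v^3)$ to $\mathcal O(v^4)$ does not hold. The function $v(t)=q_{1/2}t+\mathcal O(t^2)$ is \emph{not} even in $t$ --- the hidden $\mathcal O((z-z_0))$ term in \eqref{eq:qsing} contributes a $t^2$ term, a possible $(z_0-z)^{3/2}$ term contributes $t^3$, and so on --- so its compositional inverse $t(v)$ carries no parity, and $w(v)=-t(v)^2$ generically has a nonzero $v^3$ coefficient (explicitly $2a_2/q_{1/2}^4$ if $v=q_{1/2}t+a_2t^2+\cdots$). From the stated hypotheses only $\mathcal O((q-q_0)^3)$ follows; the $\mathcal O((q-q_0)^4)$ in the lemma as printed seems to be a slip, and in the application (Section~\ref{sec:Phi=3}) only the weaker conclusion $z'(q_0)=0$ is actually used.
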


\begin{proof}
We concentrate on the case where $d>0$.
By standard bootstrap arguments (see 
\cite[Sections~2.5--2.7]{BruiAA}), one sees that, in a right slit
neighbourhood of $q_0$,
\begin{equation} \label{eq:ztrunc} 
z(q)=z_0+z_1(q-q_0)+z_2(q-q_0)^2+\cdots+z_d(q-q_0)^d+\tilde z(q),
\end{equation}
where the coefficients $z_0,z_1,\dots,z_d$ agree with the corresponding Taylor
coefficients of the compositional inverse of the truncated series
$$
q_0+q_1(z-z_0)+q_2(z-z_0)^2+\cdots+q_d(z-z_0)^d,
$$
and where $\tilde z(q)$ is a function which is analytic in the same
slit neighbourhood, and which satisfies $\tilde z(q)=
o\big((q-q_0)^d\big)$. In particular, $z_1=1/q_1$.
Continuing the bootstrap,
the expansion \eqref{eq:ztrunc} (with $q$ replaced by $x$ in order to
minimise the number of possible confusions) is now
substituted in \eqref{eq:qsing}. In that manner, we obtain
$$
0=q_1\tilde z(x)+q_{d+\frac {1} {2}}z_1^{d+\frac {1}
  {2}}(q_0-x)^{d+\frac {1} {2}} +\mathcal O\big((x-q_0)^{d+1}\big).
$$
The claimed result is now obvious.
\end{proof}

\begin{lem} \label{lem:qz3}
Suppose we are given a complex function $q(z)$ which is analytic in a
right slit neighbourhood of $z=z_0$, where it
has a singular expansion that begins
\begin{multline} \label{eq:qsinglog}
q(z)=q_0+q_1(z-z_0)+q_2(z-z_0)^2+\cdots\\+q_{d-1}(z-z_0)^{d-1}+
q_{d+}(z-z_0)^{d}\log(z_0-z)
+\mathcal O\big((z-z_0)^{d}\big),
\end{multline}
where $d\ge1$, $q_{d+}\ne0$, and, if $d>1$, then also $q_1\ne0$.
Then there exists a local inverse function $z(q)$ which, in a
right slit neighbourhood of $q=q_0$, is analytic and
has a singular expansion that begins
\begin{equation} \label{eq:zsinglog2}
z(q)=z_0+z_{1+}(q-q_0)\log^{-1}(q_0-q)
+o\big((q-q_0)\log^{-1}(q_0-q)\big)
\end{equation}
if $d=1$, where $z_{1+}=1/q_{1+}$,
and begins
\begin{multline} \label{eq:zsinglog1}
z(q)=z_0+z_1(q-q_0)+z_2(q-q_0)^2+\cdots\\+z_{d-1}(q-q_0)^{d-1}+
z_{d+}(q-q_0)^{d}\log(q_0-q)
+\mathcal O\big((q-q_0)^{d}\big)
\end{multline}
if $d>1$, where $z_1=1/q_1$ and $z_{d+}=-q_1^{-d-1}q_{d+}$.
\end{lem}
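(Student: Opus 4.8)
The plan is to follow the bootstrap argument used in the proof of Lemma~\ref{lem:qz2}, distinguishing the cases $d>1$ and $d=1$. The one genuinely new feature here is the logarithmic term $q_{d+}(z-z_0)^d\log(z_0-z)$ in \eqref{eq:qsinglog} and, when $d=1$, the fact that the leading behaviour of $q(z)-q_0$ is no longer a linear function of $z-z_0$.

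I would first treat $d>1$. Exactly as in the proof of Lemma~\ref{lem:qz2}, the standard results on inversion of asymptotic expansions in slit neighbourhoods \cite[Sections~2.5--2.7]{BruiAA} yield a local inverse $z(q)$, analytic in a right slit neighbourhood of $q_0$, of the form
$$
z(q)=z_0+z_1(q-q_0)+\cdots+z_{d-1}(q-q_0)^{d-1}+\tilde z(q),
$$
where $z_0,\dots,z_{d-1}$ are the Taylor coefficients of the compositional inverse of the polynomial $q_0+q_1(z-z_0)+\cdots+q_{d-1}(z-z_0)^{d-1}$ (so that $z_1=1/q_1$), and where $\tilde z(q)$ is analytic in the same neighbourhood and satisfies $\tilde z(q)=o\big((q-q_0)^{d-1}\big)$. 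To continue the bootstrap I would substitute this expression (with $q$ renamed $x$) into \eqref{eq:qsinglog}. On the right-hand side, the polynomial part reproduces $x-q_0$ modulo $\mathcal O\big((x-q_0)^d\big)$; the linear coefficient contributes $q_1\tilde z(x)$, all other occurrences of $\tilde z(x)$ being of strictly smaller order; and, since $z_0-z(x)=z_1(q_0-x)\big(1+o(1)\big)$ so that $\log(z_0-z(x))=\log(q_0-x)+\mathcal O(1)$ (with the branch conventions built into the notion of a slit neighbourhood), the logarithmic term contributes $q_{d+}z_1^d(x-q_0)^d\log(q_0-x)+\mathcal O\big((x-q_0)^d\big)$. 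Comparing the two sides of $q(z(x))=x$ at order $(x-q_0)^d\log(q_0-x)$ then gives
$$
0=q_1\tilde z(x)+q_{d+}z_1^d(x-q_0)^d\log(q_0-x)+\mathcal O\big((x-q_0)^d\big),
$$
whence $\tilde z(q)=-q_{d+}z_1^dq_1^{-1}(q-q_0)^d\log(q_0-q)+\mathcal O\big((q-q_0)^d\big)$; since $z_1=1/q_1$, this is \eqref{eq:zsinglog1}, with $z_{d+}=-q_{d+}q_1^{-d-1}$. (To reach the stated error term $\mathcal O\big((q-q_0)^d\big)$, rather than something slightly coarser, one may have to iterate this substitution once, first establishing the rough bound $\tilde z(q)=\mathcal O\big((q-q_0)^d\log(q_0-q)\big)$.)

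For $d=1$ there is no polynomial part, and the leading behaviour is $q(z)-q_0=q_{1+}(z-z_0)\log(z_0-z)\big(1+o(1)\big)$, which has to be solved for $z-z_0$. A bootstrap analogous to the above (again justified by \cite[Sections~2.5--2.7]{BruiAA}) shows that $\log(z_0-z)=\log(q_0-q)\big(1+o(1)\big)$, and hence that $z(q)=z_0+(q-q_0)/\big(q_{1+}\log(q_0-q)\big)\cdot\big(1+o(1)\big)$, which is \eqref{eq:zsinglog2} with $z_{1+}=1/q_{1+}$.

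The part I expect to demand the most care is not the bookkeeping of coefficients, which is routine, but making the two appeals to \cite{BruiAA} precise: verifying that $q(z)$ is invertible on a (possibly smaller) right slit neighbourhood of $z_0$, that its inverse is analytic there, and that the successive substitutions are asymptotically valid up to the orders claimed. One also has to keep $z_0-z(q)$ inside the region where $\log(z_0-z)=\log(q_0-q)+\mathcal O(1)$ holds with the expected branch of the logarithm — which is precisely what the slit neighbourhood hypothesis guarantees.
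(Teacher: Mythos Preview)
Your proposal is correct and follows essentially the same bootstrap approach as the paper. The only notable difference is in the $d=1$ case: where you assert in one line that $\log(z_0-z)=\log(q_0-q)\big(1+o(1)\big)$ and then substitute back, the paper makes the bootstrap explicit by first taking logarithms of both sides of \eqref{eq:qsinglog}, introducing the auxiliary variable $Z(X)=\log(z_0-z(x))$ with $X=q_0-x$, and iterating three times to obtain $Z(X)=\log(X)-\log\log(X)-\log(q_{1+})+o(1)$ before exponentiating --- but this yields exactly your conclusion with the same error term.
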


\begin{proof}
For \eqref{eq:zsinglog1}, one proceeds exactly in the same fashion as
in the proof of Lemma~\ref{lem:qz2}.

In order to establish the expansion \eqref{eq:zsinglog2}, one 
replaces $z$ by $z(x)$ in \eqref{eq:qsinglog}, and subsequently one
applies the logarithm on both sides. This leads to
\begin{equation*} 
\log(q_0-x)=\log (q_{1+})+\log(z_0-z(x))+\log\log(z_0-z(x))+
\mathcal O\big(\log^{-1}(z_0-z(x))\big).
\end{equation*}
In order to simplify, we replace $q_0-x$ by $X$ and $\log(z_0-z(x))$ by $Z(X)$:
\begin{equation} \label{eq:boot1} 
\log(X)=\log (q_{1+})+Z(X)+\log(Z(X))+
\mathcal O\big(Z(X)^{-1}\big).
\end{equation}
Applying bootstrapping again in a neighbourhood of $X=0$, 
we must have $Z(X)=\log(X)+\tilde Z(X)$, where $\tilde
Z(X)=o(\log(X))$. If we substitute this in \eqref{eq:boot1}, we
obtain, after little simplification,
\begin{equation} \label{eq:boot2} 
0=\log (q_{1+})+\tilde Z(X)+\log\log(X)+o(1).
\end{equation}
Now we see that $\tilde Z(X)=-\log\log(X)+\tilde{\tilde Z}(X)$,
where $\tilde{\tilde Z}(X)=o(\log\log(X))$. By substituting this in
\eqref{eq:boot2}, we arrive at
\begin{equation} \label{eq:boot3} 
0=\log (q_{1+})+\tilde {\tilde Z}(X)+o(1),
\end{equation}
from which we deduce $\tilde {\tilde Z}(X)=-\log (q_{1+})+o(1)$.
If we now put everything together, then we obtain
$$
Z(X)=\log(X)-\log\log(X)-\log (q_{1+})+o(1),
$$
or, in the original notation,
$$
z_0-z(x)=\frac {q_0-x} {q_{1+}\log(q_0-x)}\big(1+o(1)\big).
$$
After replacement of $x$ by $q$, one sees that this is equivalent to
\eqref{eq:zsinglog2}. 
\end{proof}

\begin{lem} \label{lem:qzess}
Suppose we are given a complex function $q(z)$ which is analytic in a 
right slit neighbourhood of $z=z_0$, where it
has a singular expansion that begins
$$
q(z)=q_0+{q_1} {\log^{-1}(z_0-z)}+
\mathcal O\big({\log^{-2}(z_0-z)}\big),
$$
where $q_1\ne0$.
Then there exists a local inverse function $z(q)$ which, in a
right slit neighbourhood of $q=q_0$, is analytic and
has a singular expansion of the form
$$
z(q)=\exp\bigg(\frac {q_1} {q-q_0}+\mathcal O(1)\bigg).
$$
In particular, $z(q)$ has an essential singularity at $q=q_0$.
\end{lem}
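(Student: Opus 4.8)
The plan is to invert the relation $q(z)=q_0+q_1\log^{-1}(z_0-z)+\mathcal O(\log^{-2}(z_0-z))$ by the same bootstrapping philosophy that was used in Lemma~\ref{lem:qz3}, but this time we have to be more careful because the leading correction term is only logarithmically small, so the inverse function will approach $z_0$ exponentially fast. First I would rewrite the expansion by setting $W:=\log^{-1}(z_0-z)$, i.e. $z_0-z=\exp(1/W)$, so that $q(z)-q_0=q_1W+\mathcal O(W^2)=q_1W(1+\mathcal O(W))$. Since $z\to z_0$ forces $W\to 0$ (along the right slit neighbourhood, where $\log(z_0-z)$ has large negative real part), we may invert the analytic relation $q-q_0=q_1W+\mathcal O(W^2)$ near $W=0$: by the analytic implicit function theorem there is a function $W=W(q)$, analytic in a right slit neighbourhood of $q=q_0$, with $W(q)=\dfrac{q-q_0}{q_1}+\mathcal O\big((q-q_0)^2\big)$ as $q\to q_0$.

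Next I would simply undo the substitution: from $z_0-z=\exp(1/W(q))$ we get
\begin{equation*}
z(q)=z_0-\exp\!\bigg(\frac{1}{W(q)}\bigg)
=z_0-\exp\!\bigg(\frac{q_1}{q-q_0}\big(1+\mathcal O(q-q_0)\big)^{-1}\bigg)
=z_0-\exp\!\bigg(\frac{q_1}{q-q_0}+\mathcal O(1)\bigg),
\end{equation*}
using that $1/W(q)=\dfrac{q_1}{q-q_0}+\mathcal O(1)$. This is the stated form (absorbing the $z_0$ and the sign into the $\mathcal O(1)$ inside the exponential, or keeping it as written — either way the conclusion $z(q)=\exp(q_1/(q-q_0)+\mathcal O(1))$ holds once one observes that $z_0$ itself is swamped: the exponential term tends to $0$ as $q\to q_0$ from the appropriate side, so $z(q)\to z_0$ and the additive constant $z_0$ can be folded into an $\mathcal O(1)$ multiplicative factor $\exp(\mathcal O(1))$). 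Finally, the claim about the essential singularity is immediate: $z(q)$ contains the factor $\exp(q_1/(q-q_0))$, and a function with such a factor has neither a removable singularity nor a pole at $q_0$ — approaching $q_0$ along different rays makes $|z(q)|$ tend to $0$, to $\infty$, or oscillate, depending on the sign of $\operatorname{Re}(q_1/(q-q_0))$, which is exactly the Casorati–Weierstrass behaviour of an essential singularity.

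The main subtlety — really the only place one must pay attention — is making precise what ``right slit neighbourhood of $q_0$'' means here and checking that the map $z\mapsto W=\log^{-1}(z_0-z)$ is a biholomorphism from a right slit neighbourhood of $z_0$ onto a region containing a punctured neighbourhood of $W=0$ slit appropriately, so that the analytic inversion $W=W(q)$ is legitimately carried back to an analytic $z(q)$; one also has to confirm that the error terms in the original singular expansion, which are $\mathcal O(\log^{-2}(z_0-z))=\mathcal O(W^2)$, are genuinely analytic (not merely bounded) functions of $W$ on that region, so that the implicit function theorem applies — but this is exactly the standing hypothesis on the shape of the singular expansion, and the remaining manipulations are the routine bootstrap already illustrated in the proof of Lemma~\ref{lem:qz3}.
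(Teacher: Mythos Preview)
Your approach is the same bootstrapping the paper alludes to in its one-line proof, and the core of your argument --- arriving at $z_0-z(q)=\exp\!\big(q_1/(q-q_0)+\mathcal O(1)\big)$ --- is correct and is all that is needed for the essential-singularity conclusion.

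Two small points deserve attention. First, you invoke the analytic implicit function theorem to invert $q-q_0=q_1W+\mathcal O(W^2)$, asserting that the $\mathcal O(W^2)$ term is analytic in $W$; but the hypothesis only gives analyticity of $q$ in $z$, not of the remainder in $W=\log^{-1}(z_0-z)$. This is unnecessary: plain bootstrapping (substitute $W=(q-q_0)/q_1+o(1)$ back into the $\mathcal O(W^2)$) yields $W=(q-q_0)/q_1+\mathcal O((q-q_0)^2)$ without any analyticity assumption on the error. Second, your ``swamping'' step is not right as written: if $z_0\neq 0$ and the exponential tends to $0$, then $z(q)\to z_0$, so $\log z(q)$ stays bounded and cannot equal $q_1/(q-q_0)+\mathcal O(1)$. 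The honest statement is $z_0-z(q)=\exp\!\big(q_1/(q-q_0)+\mathcal O(1)\big)$ (the lemma's displayed formula is a touch loose on this point), and this already forces the essential singularity, which is the only use made of the lemma in the paper.
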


\begin{proof}
This is again easily derived by bootstrapping.
\end{proof}

Finally we provide a general result which, upon combination with
Lemma~\ref{lem:expinf}, yields the singular expansion 
of $\mathbf z_{\mathbf N}(q)$ at 
$q=-\exp\big(-\pi\cot(\pi/M)\big)$.

\begin{lem} \label{lem:qz}
Let $q(z)$ be a complex function that is analytic near $\infty$ except
for a cut on the positive real axis.

\begin{enumerate}
\item[$(i)$]  
Suppose that $q(z)$ has a singular expansion at
$z=\infty$ that begins
$$
q(z)=q_0+q_1(-z)^{-\frac {r} {n}}+
\mathcal O\Big((-z)^{-\frac {r+1} {n}}\log^b(-z)\Big),
$$
where $q_1\ne0$ and $r,n,b$ are non-negative integers such that $r\ge1$.
Then there exists a local inverse function $z(q)$ which, in a
neighbourhood of $q=q_0$, has a singular expansion that begins
$$
z(q)=-q_1^{\frac {n} {r}}(q-q_0)^{-\frac {n} {r}}+
\mathcal O\Big((q-q_0)^{-\frac {n-1} {r}}\log^{b}\big(q_0-
{q} \big)\Big).
$$

\item[$(ii)$]  
Suppose that $q(z)$ has a singular expansion at
$z=\infty$ that begins
$$
q(z)=q_0+q_1(-z)^{-\frac {r} {n}}\log^b(-z)+
\mathcal O\Big((-z)^{-\frac {r} {n}}\log^{b-1}(-z)\Big),
$$
where $q_1\ne0$ and $r,n,b$ are non-negative integers with $r,b\ge1$.
Then there exists a local inverse function $z(q)$ which, in a
neighbourhood of $q=q_0$, has a singular expansion that begins
$$
z(q)=-\left(-\tfrac {n} {r}\right)^{\frac {br} n}q_1^{\frac {n} {r}}
(q-q_0)^{-\frac {n} {r}}\log^{\frac {bn} r}\big(q_0-
{q}\big)+
\mathcal O\Big((q-q_0)^{-\frac {n} {r}}\log^{\frac {bn} {r}-1}\big(q_0-
{q} \big)\Big).
$$

\item[$(iii)$]  
Suppose that $q(z)$ has a singular expansion at
$z=\infty$ that begins
$$
q(z)=q_0+q_1\log^{-1}(-z)+
\mathcal O\Big(\log^{-2}(-z)\Big),
$$
where $q_1\ne0$.
Then there exists a local inverse function $z(q)$ which, in a
neighbourhood of $q=q_0$, has a singular expansion that begins
$$
z(q)=\exp\bigg(\frac {q_1} {q-q_0}+
\mathcal O(1)\bigg).
$$
\end{enumerate}
\end{lem}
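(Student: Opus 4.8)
The plan is to treat all three parts by the same elementary device, iterative substitution (``bootstrapping''), in the spirit of \cite[Sections~2.5--2.7]{BruiAA}, exactly as in the proofs of Lemmas~\ref{lem:qz2}--\ref{lem:qzess}. In each case the existence of a local inverse $z(q)$ on a (possibly slit) neighbourhood of $q_0$ follows from the standard arguments of \cite[Sections~2.5--2.7]{BruiAA}, since the leading term of the given expansion is invertible in the relevant asymptotic scale; I would dispose of this quickly and concentrate on the asymptotic form of $z(q)$, which I would obtain by first extracting a crude leading approximation and then feeding it back into the hypothesis.

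For $(i)$, rewrite the hypothesis as $q-q_0=q_1(-z)^{-r/n}(1+\mathcal O((-z)^{-1/n}\log^b(-z)))$ and raise both sides to the power $-n/r$ (choosing branches as dictated by the slit) to obtain
\begin{equation*}
-z=q_1^{n/r}(q-q_0)^{-n/r}\bigl(1+\mathcal O((q_0-q)^{1/r}\log^b(q_0-q))\bigr),
\end{equation*}
the order of the correction being read off by substituting the leading approximation $-z\sim q_1^{n/r}(q-q_0)^{-n/r}$ back into the error estimate (using $\log(-z)=\mathcal O(\log(q_0-q))$ there). Since $z=-(-z)$, this is the asserted expansion for $z(q)$, with error $\mathcal O((q-q_0)^{-(n-1)/r}\log^b(q_0-q))$.

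Part $(ii)$ is the one requiring care, and I expect the bookkeeping there to be the main obstacle. From $q-q_0=q_1(-z)^{-r/n}\log^b(-z)(1+o(1))$ I would take logarithms, obtaining
\begin{equation*}
\log(q_0-q)=\log q_1-\tfrac rn\log(-z)+b\log\log(-z)+o(1).
\end{equation*}
Since $\log(-z)\to\infty$ as $z\to\infty$, the term $-\tfrac rn\log(-z)$ dominates the right-hand side, so that $\log(-z)=-\tfrac nr\log(q_0-q)(1+o(1))$, with one further bootstrap step needed to control the $\log\log(-z)$ contribution. Substituting this into $\log^b(-z)$ and solving the original relation for $(-z)^{-r/n}$ produces an expression of the shape $c\,(q-q_0)\log^{-b}(q_0-q)(1+o(1))$ with $c\ne0$ a constant, and raising it to the power $-n/r$, then negating, yields the stated leading term of $z(q)$. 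A last step, using that the relative error in the hypothesis is only $\mathcal O(1/\log(-z))$, shows that the remainder is smaller by a factor $1/\log(q_0-q)$, which gives the asserted error term. The delicate points are (a) keeping track of the iterated logarithm $\log\log(-z)$ precisely enough that the correct constant prefactor and the correct power of $\log(q_0-q)$ emerge, and (b) fixing mutually compatible branches of $(-z)^{-r/n}$, $\log(-z)$, $(q-q_0)^{-n/r}$ and $\log(q_0-q)$, for which one uses that $z$ stays in a sector avoiding $[0,+\infty)$, so that $q$ approaches $q_0$ only along a correspondingly restricted set of directions.

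Finally, $(iii)$ is proved exactly as Lemma~\ref{lem:qzess}: from $q-q_0=q_1\log^{-1}(-z)(1+o(1))$ one gets $\log(-z)=\frac{q_1}{q-q_0}(1+o(1))$, and exponentiating, while absorbing the factor $-1$ from $z=-(-z)$ together with the $(1+o(1))$ into an additive $\mathcal O(1)$ inside the exponential, gives $z(q)=\exp\bigl(\frac{q_1}{q-q_0}+\mathcal O(1)\bigr)$, which exhibits the essential singularity at $q_0$.
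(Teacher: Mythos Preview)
Your proposal is correct and follows exactly the approach indicated in the paper, namely bootstrapping in the style of \cite[Sections~2.5--2.7]{BruiAA}; in fact the paper's own proof consists of the single sentence ``Once more, this is easily derived by bootstrapping,'' so your sketch supplies considerably more detail than the original.
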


\begin{Remark}
If, in Case $(i)$ with $r=1$, the series $q(z)$ admits in fact a Puiseux
expansion in $(-z)^{-\frac {1} {n}}$, then it is not difficult to see
that $z(q)$ has a pole of order $n$ at $q=q_0$. If the singular
expansion of $q(z)$ at $z=-\infty$ should have terms containing
$\log(-z)$ (which is often the case in the situation of
Lemma~\ref{lem:expinf}; see the remark accompanying that lemma), then
the point $q=q_0$ will be a branch point of $z(q)$.
\end{Remark}

\begin{proof}
Once more, this is easily derived by bootstrapping.
\end{proof}

\section{Proof of Theorem~\ref{theo:2}}
\label{sec:theo2}

For the proof of the theorem, we shall require the following
auxiliary result.

\begin{lem} \label{lem:F0}
The function ${\bf F}_{\bf N}(z)$ does not vanish on its disk of 
convergence $\vert z\vert< 1/C_{\bf N}$.
\end{lem}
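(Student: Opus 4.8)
The plan is to show that ${\bf F}_{\bf N}(z)$ has strictly positive Taylor coefficients and use this together with Pringsheim's theorem to locate a potential zero. Actually the cleanest route exploits the positivity that is already available from Corollary~\ref{cor:1}: the coefficients of $\widehat{\bf F}_{\bf N}(z)=1-1/{\bf F}_{\bf N}(z)$ are all non-negative (in fact positive except for the constant term). First I would observe that this means $1/{\bf F}_{\bf N}(z)=1-\widehat{\bf F}_{\bf N}(z)$ has real Taylor coefficients at $0$ with $\widehat{\bf F}_{\bf N}(0)=0$, so $1/{\bf F}_{\bf N}(z)$ is a power series with constant term $1$ and all other coefficients non-positive. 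A zero of ${\bf F}_{\bf N}$ inside the disk of convergence would be a pole of $1/{\bf F}_{\bf N}(z)$; the strategy is to rule this out on the positive real axis first, and then leverage the coefficient signs to transport the conclusion to the whole disk.

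The key steps, in order, are as follows. (1) On the real interval $[0,1/C_{\bf N})$, the function ${\bf F}_{\bf N}(x)=\sum_m {\bf B}_{\bf N}(m)x^m$ is a sum of strictly positive terms (since ${\bf B}_{\bf N}(m)>0$ by Lemma~\ref{lem:2}), hence ${\bf F}_{\bf N}(x)\ge {\bf B}_{\bf N}(0)=1>0$ throughout; so ${\bf F}_{\bf N}$ has no real zero in the disk. (2) For a complex zero, suppose $z_0$ with $|z_0|=\rho<1/C_{\bf N}$ satisfies ${\bf F}_{\bf N}(z_0)=0$. Then $g(z):=1/{\bf F}_{\bf N}(z)$, which is holomorphic on $|z|<\rho'$ for some $\rho'\le 1/C_{\bf N}$, has its radius of convergence $\rho'$ at most $\rho<1/C_{\bf N}$. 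But the Taylor coefficients of $g$ at $0$ are $g_0=1$ and $g_n=-\widehat f_n\le 0$ for $n\ge1$. By Pringsheim's theorem, a power series with eventually real coefficients of constant sign has a singularity at the positive real point on its circle of convergence; applying this to $-g(z)+2 = 1+\sum_{n\ge1}\widehat f_n z^n$, whose coefficients are all $\ge0$, its radius of convergence $\rho'$ is a singular point of $g$ lying on the positive real axis. (3) This forces ${\bf F}_{\bf N}$ to vanish at the real point $\rho'<1/C_{\bf N}$ — contradicting step (1). Hence no such $z_0$ exists, proving the lemma. (The degenerate case $k=1$, ${\bf N}=(2)$ is handled directly since ${\bf F}_{(2)}(z)=(1-4z)^{-1/2}$ is manifestly zero-free on $|z|<1/4$.)

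The main obstacle is step (2): one must be careful that $\widehat f_n\le 0$... rather, $\widehat f_n\ge 0$, so that Pringsheim applies cleanly, and that the singularity of $g$ produced by Pringsheim is genuinely a pole/zero of ${\bf F}_{\bf N}$ rather than some artifact — but since ${\bf F}_{\bf N}$ is holomorphic on the full open disk $|z|<1/C_{\bf N}$, any singularity of $g=1/{\bf F}_{\bf N}$ strictly inside that disk must come from a zero of ${\bf F}_{\bf N}$, so this is automatic. One should also confirm that the radius of convergence $\rho'$ of the Taylor series of $g$ at $0$ equals the distance from $0$ to the nearest zero of ${\bf F}_{\bf N}$, which is standard. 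With these points checked, the argument is complete; alternatively, a one-line variant simply notes that $1/{\bf F}_{\bf N}(z)$ has non-positive coefficients away from the constant term, so $|1/{\bf F}_{\bf N}(z)|\ge 2 - {\bf F}_{\bf N}(|z|)^{-1}\cdot(\text{something})$... but the Pringsheim route above is the cleanest and I would present that.
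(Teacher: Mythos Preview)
Your argument is correct and rests on the same key input as the paper's proof, namely Corollary~\ref{cor:1} (the non-negativity of the Taylor coefficients of $\widehat{\bf F}_{\bf N}$). The route to the conclusion differs slightly, however. You argue indirectly: if ${\bf F}_{\bf N}$ had a zero at some $z_0$ with $|z_0|<1/C_{\bf N}$, then $1/{\bf F}_{\bf N}$ would have radius of convergence $\rho'\le|z_0|$, and Pringsheim's theorem applied to $\widehat{\bf F}_{\bf N}$ forces a singularity of $1/{\bf F}_{\bf N}$ at the real point $\rho'$, hence a real zero of ${\bf F}_{\bf N}$ in $[0,1/C_{\bf N})$, contradicting the positivity of the coefficients ${\bf B}_{\bf N}(m)$. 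The paper instead uses a direct coefficient estimate (borrowed from Lamperti): the recurrence ${\bf B}_{\bf N}(n)=\delta_{n,0}+\sum_{k=1}^n \widehat{\bf B}_{\bf N}(k){\bf B}_{\bf N}(n-k)$ together with positivity of both sequences gives $0\le \widehat{\bf B}_{\bf N}(n)\le {\bf B}_{\bf N}(n)$, so the radius of convergence of $1/{\bf F}_{\bf N}$ is at least $1/C_{\bf N}$, and the conclusion follows immediately. The paper's version is marginally more elementary in that it avoids Pringsheim and yields the coefficient bound $\widehat{\bf B}_{\bf N}(n)\le {\bf B}_{\bf N}(n)$ as a by-product; your version has the advantage of separating cleanly the real-axis case from the complex case. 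Either way the proof is short and the two are morally the same.
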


\begin{proof}

The following argument is borrowed from~\cite[page 94, last corollary]{lamperti}.
By Corollary~\ref{cor:1}, the Taylor coefficients $\widehat{\bf B}_{\bf N}(m)$ 
of $\widehat{\bf F}_{\bf N}(z)$ are positive and satisfy Equation~\eqref{eq:fn}, i.e.,
$$
\delta_{n,0}+\sum_{k=1}^{n} 
\widehat{\bf B}_{\bf N}(k) {\bf B}_{\bf N}(n-k)={\bf B}_{\bf N}(n).
$$
Since the coefficients ${\bf  B}_{\bf N}(m)$ are also positive and 
${\bf  B}_{\bf N}(0)=1$, 
it follows that
$
0 \le \widehat{\bf B}_{\bf N}(n) \le 
{\bf B}_{\bf N}(n)
$
for all $n\ge 0$. Hence, the radius of convergence of the Taylor series of 
$1/{\bf F}_{\bf N}(z)$ at $z=0$ is at least as large as 
the radius of convergence of ${\bf F}_{\bf N}(z)$ at 
$z=0$. It is necessarily equal to the latter because $z=1/C$ is a branch 
point of ${\bf F}_{\bf N}(z)$, and thus also of $1/{\bf F}_{\bf N}(z)$. 

In particular, ${\bf F}_{\bf N}(z)$ cannot vanish at some point $z_0$ such that 
$\vert z_0\vert < 1/C$, because otherwise the radius of convergence of 
$1/{\bf F}_{\bf N}(z)$ would be at most $\vert z_0\vert$, a contradiction.
\end{proof}

\begin{Remark}
In the proof of Theorem~\ref{theo:3}.$(i)$ (given in
Section~\ref{sec:polya}), we show that a classical result of P\'olya
\cite{polya} on hypergeometric series implies that ${\bf F}_{\bf N}(z)$
vanishes nowhere in the slit plane $\mathbb{C}\setminus \big[1/C,
+\infty)$. Nevertheless, we believe that that the above argument,
proving a weaker result, is still worth being recorded since it is
only based on the positivity of the coefficients $\mathbf B_{\mathbf
N}(m)$ and $\widehat{\mathbf B}_{\mathbf N}(m)$ and not on the
hypergeometric nature of ${\bf F}_{\bf N}(z)$.
\end{Remark}

We now turn to the proof of items $(i)$--$(vi)$ of
Theorem~\ref{theo:2}.

\medskip

$(i)$, $(ii)$ In view of the explicit expression \eqref{eq:Phi=1}, the claim
is trivial for $\Phi=1$. We therefore assume $\Phi\ge2$ from now on.

Clearly, the discussion in the proof of Lemma~\ref{lem:F0} 
also implies that the radius of convergence of 
the Taylor series at $z=0$ 
of $\exp({\bf G}_{\bf N}(z)/{\bf F}_{\bf N}(z))$ 
is at least $1/C$. By Lemma~\ref{lem:exp1/C}, which says in particular
that ${\bf q}_{\bf N}(z)$ has a singularity at $z=1/C$, it cannot
be larger. 

It remains to prove that the Taylor series $\sum_{m\ge 1} {\sf q}_mz^m$ 
of ${\bf q}_{\bf N}(z)$ converges on 
the circle $\vert z\vert =1/C$. By \eqref{eq:lem3}, we have  
\begin{equation}\label{eq:limG/F}
\lim_{z\to 1/C} \frac{{\bf G}_{\bf N}(z)}{{\bf F}_{\bf N}(z)} 
= \log(C) + \lim_{z\to 1/C} \frac{{\bf G}_{\bf N}(z)-\log(C){\bf F}_{\bf N}(z)}
{{\bf F}_{\bf N}(z)} = \log(C) + \frac{S}{{\bf F}_{\bf N}(1/C)},
\end{equation}
where the second term must be understood as $0$ if $\lim_{z\to 1/C}{\bf F}_{\bf N}(z) =+\infty$. 
Hence ${\bf q}_{\bf N}(1/C)$ exists and is finite. 
By Abel's theorem, ${\bf q}_{\bf N}(1/C)=\sum_{m\ge 1} {\sf q}_m /C^{m}$, 
and since the ${\sf q}_m$ are 
non-negative, $\sum_{m\ge 1} {\sf q}_mz^m$ converges for any $z$ such that $\vert z\vert =1/C$.

\medskip

$(iii)$ We have
$$
\max_{\vert z\vert =1/C} \vert {\bf q}_{\bf N}(z) \vert = {\bf q}_{\bf N}(1/C).
$$
By~\eqref{eq:limG/F}, we have 
$$
{\bf q}_{\bf N}(1/C) = \exp\Big(\frac{S}{{\bf F}_{\bf N}(1/C)}\Big)\le 1
$$
because $S<0$ and ${\bf F}_{\bf N}(1/C)>0$. There is equality to $1$ only if 
${\bf F}_{\bf N}(1/C)=+\infty$, which, by \eqref{eq:Basy}, happens
exactly when $\Phi=1$ or $\Phi=2$.

\medskip
$(iv)$ 
The claimed assertions follow from \eqref{eq:Phi=1} upon little calculation.

\medskip
$(v)$
By the standard theorems of singularity analysis (see
\cite[Ch.~VI]{FlSeAA}), 
the assertion follows immediately from Lemma~\ref{lem:exp1/C}.$(i)$.

\medskip
$(vi)$
Again, by the standard theorems of singularity analysis, 
the assertion follows immediately from  
Lemma~\ref{lem:exp1/C}.$(ii)$, $(iii)$.
\hfill\qed

\section{Proof of Theorem~\ref{theo:3}}
\label{sec:polya}

$(i)$
From Section~\ref{sec:analytic},
we know that ${\bf F}_{\bf N}(z)$ and ${\bf G}_{\bf N}(z)$  can both be analytically 
continued to $\mathbb{C}\setminus [1/C, +\infty)$. Hence 
$$
{\bf q}_{\bf N}(z)=\exp\Big(\frac{{\bf G}_{\bf N}(z)+\log(z){\bf F}_{\bf N}(z)}{{\bf F}_{\bf N}(z)}\Big) 
=z\exp\Big(\frac{{\bf G}_{\bf N}(z)}{{\bf F}_{\bf N}(z)}\Big)
$$
can be continued at least to 
$\mathbb{C}\setminus \big([1/C, +\infty)\cup Z\big)$, where $Z$ 
is the set of zeroes of ${\bf F}_{\bf N}(z)$. 

The reader should recall that Lemma~\ref{lem:F0} says that the
intersection of $Z$ and the open disk of convergence of ${\bf F}_{\bf
N}(z)$ is empty.
We now show that, in fact, the entire set $Z$ is empty. For this, we apply a result of
P\'olya~\cite[p.~192]{polya} on hypergeometric functions.
Recalling the hypergeometric notation
\begin{eqnarray*}
{}_{q+1} F _{q} \!
\left [ 
\begin{matrix} {a_0, a_1, \ldots,a_q}\\ 
{ b_1, \ldots, b_q}\end{matrix} ; {\displaystyle z}
\right ]
=\sum_{k=0}^{\ii}
\frac{(a_0)_k\,(a_1)_k\cdots(a_{q})_k}
{k!\,(b_1)_k\cdots(b_q)_k} z^k,
\end{eqnarray*}
P\'olya's result implies in particular that the above hypergeometric function 
does not vanish for any $z\in \mathbb{C}\setminus [1, +\infty)$ when $0<a_0<1$, 
$0<a_1<b_1, \ldots, 0<a_{q}<b_{q}$. 
Now, indeed, ${\bf F}_{\bf N}(z)$ can be written in
hypergeometric notation:
$$
{\bf F}_{\bf N}(z)=
{}_{\Phi}F_{\Phi-1}\!\left [ \begin{matrix}
r_{1,1}/N_1,\dots,r_{k,\varphi(N_k)}/N_k\\
1,\dots,1\end{matrix} ;
{\displaystyle Cz}\right ].
$$
In particular, we see that
P\'olya's conditions are satisfied by this
hypergeometric function, which proves that $Z$ is empty.

\medskip
$(ii)$
If $\Phi\ge2$, this is a consequence of the singular expansion of
$\mathbf q_{\mathbf N}(z)$ at $z=1/C$ given in Lemma~\ref{lem:exp1/C}.

If $\Phi=1$, then we know that ${\bf
  q}_{(2)}(z)=(1-\sqrt{1-4z})^2/(4z)$, which has evidently a branch
point with non-trivial monodromy at $z=1/C_{(2)}=1/4$.

\medskip
$(iii)$
The assertion concerning the limit of ${\bf q}_{\bf N}(z)$  at infinity 
is an immediate consequence of Lemma~\ref{eq:asympG/F}.
\hfill\qed

\section{Proof of Theorem~\ref{theo:4}}
\label{sec:rad}

$(i)$
We know from Theorem~\ref{theo:3} that 
$$
\lim_{z\to \infty}{\bf q}_{\bf N}(z) =-\exp(-\pi\cot(\pi/M))=:\rho,
$$
where the limit has
to be performed along a path that avoids the cut $[1/C,+\infty)$.
Let us suppose that the radius of convergence of ${\bf z}_{\bf
N}(q)$, $R$ say, is strictly larger than $\vert\rho\vert$. One can find 
$\ep>0$ and $A(\ep)>0$ with the property that, if $\vert
x\vert>A(\ep)$ and $x\notin [1/C,+\infty)$, then 
$$\vert \mathbf q_{\mathbf
  N}(x)-\rho\vert<\ep
\quad \text{and}\quad \vert\rho\vert+\ep<R.$$
For the above $x$, the quantity ${\bf z}_{\bf N}({\bf q}_{\bf N}(x))$
is well-defined and equals $x$. 
Consequently, 
$$
\infty=
\underset{x\notin [1/C,+\infty)}{\underset{\vert x\vert>A(\ep)}{\lim_{x\to
      \infty}}} 
x =
\underset{x\notin [1/C,+\infty)}{\underset{\vert x\vert>A(\ep)}{\lim_{x\to
      \infty}}} 
{\bf z}_{\bf N}({\bf q}_{\bf N}(x)) = \lim_{q\to \rho} {\bf z}_{\bf N}(q),
$$
where the last limit is along a suitable path. Hence, the point $q=\rho$
is a singularity of ${\bf z}_{\bf N}(q)$, which contradicts our
assumption that $R>\vert\rho\vert$. Therefore the radius of convergence $R$ is
in fact $\le \vert\rho\vert$.\hfill\qed

\medskip
$(ii)$
Arguing by contradiction, we suppose that the radius of convergence of
${\bf z}_{\bf N}(q)$ is strictly larger than ${\bf q}_{\bf N}(1/C)$. 
In that case, ${\bf z}_{\bf N}(q)$ is analytic around ${\bf q}_{\bf
  N}(1/C)$. For the derivative of $\mathbf z_{\mathbf N}$, we have
\begin{equation} \label{eq:zq} 
\mathbf z'_{\mathbf N}\big({\bf q}_{\bf N}(z)\big)=
\frac {1} {{\bf q'}_{\bf N}(z)}.
\end{equation}

Let us first assume that $\Phi>4$.
From Theorem~\ref{theo:2}.$(vi)$, we know that the $m$-th
coefficient of ${\bf q}_{\bf N}(z)$, ${\sf q}_m$ say, 
behaves like $C^m/m^{\Phi/2}$ (up
to a multiplicative constant). Hence, since $\Phi/2>2$,
the series
$
\sum _{m=0} ^{\infty}m{\sf q}_m/C^{m-1}
$
converges, and by Abel's theorem it agrees with the limit 
$$
\om_1:=\underset{\vert z\vert<1/C}{\lim_{z\to 1/C}}
 {{\bf q'}_{\bf N}(z)}.
$$
Hence, by \eqref{eq:zq} and the continuity of ${\bf z}'_{\bf N}(q)$, we have
${\bf z}'_{\bf N}({\bf q}_{\bf N}(1/C))=1/\om_1$, which is different from zero.
As a consequence, by \cite[Theorems~2.4b, 2.4c]{HenrAA}, 
${\bf z}_{\bf N}(q)$ can be inverted in a neighbourhood of 
$q={\bf q}_{\bf N}(1/C)$. This would imply that ${\bf q}_{\bf N}(z)$
is analytic around $1/C$, which contradicts
Theorem~\ref{theo:2}.$(ii)$.

On the other hand, if $\Phi=4$, the above argument has to be adapted
in order to lead to a contradiction. 
To begin with, by Lemma~\ref{lem:exp1/C}.$(iii)$, we have
\begin{equation} \label{eq:singexp2} 
{\bf q}_{\bf N}(z)-
{\bf q}_{\bf N}(1/C)=
\om_2(1-Cz)\log(1-Cz)(1+o(1))
\end{equation}
for $z\to1/C$, where $\om_2$ is some non-zero constant.
Next, we compute the derivative of ${\bf q}_{\bf N}(z)$ using the expression
given in \eqref{eq:G} for the quotient ${\bf G}_{\bf N}(z)/{\bf
  F}_{\bf N}(z)$. Subsequently, we compute its singular expansion for
$z\to1/C$ in the same style as the one for ${\bf q}_{\bf N}(z)$.
The result is that
\begin{equation} \label{eq:singexp3}
{\bf q'}_{\bf N}(z)
=
\om_3\log(1-Cz)(1+o(1))
\end{equation}
for $z\to1/C$, where $\om_3$ is some non-zero constant.
Consequently,
$$
\mathbf z'_{\mathbf N}\big({\bf q}_{\bf N}(1/C)\big)=
\underset{\vert z\vert<1/C}{\lim_{z\to 1/C}}
\frac {1} {{\bf q'}_{\bf N}(z)}=0.
$$
Since $\mathbf z'_{\mathbf N}(q)$ is analytic in a neighbourhood of $q={\bf
  q}_{\bf N}(1/C)$, and since $\mathbf z'_{\mathbf N}(q)$ cannot be
constant (this would imply that $\mathbf q_{\mathbf N}(z)$ is linear,
which contradicts Theorem~\ref{theo:2}.$(ii)$), we have
$$
\mathbf z'_{\mathbf N}(q)=\om_4\big(q-{\bf
  q}_{\bf N}(1/C)\big)^s(1+o(1))
$$
for $q\to{\bf q}_{\bf N}(1/C)$,
where $\om_4$ is a non-zero constant and $s$ is a positive integer.
If we use this in \eqref{eq:zq}, together with \eqref{eq:singexp2} and
\eqref{eq:singexp3}, we obtain
\begin{align*}
\om_4\om_2^s(1-Cz)^s\log^s(1-Cz)(1+o(1))=
\frac {1+o(1)} {\om_3\log(1-Cz)}
\end{align*}
for $z\to1/C$, which is absurd.
\hfill\qed

\begin{Remark} \label{rem:1}
The above argument for the case where $\Phi=4$ does not lead to a
contradiction when applied to the case $\Phi=3$. This is in accordance
with Theorem~\ref{theo:5} and the fact that 
$\exp\big(-\pi\cot(\pi/M)\big)$ is larger than ${\bf q}_{\bf
  N}(1/C_{\bf N})$ in the relevant cases, see Section~\ref{sec:Phi}.
\end{Remark}

\section{Comparison of the two critical values
$\exp(-\pi\cot(\pi/M_{\mathbf N}))$ and ${\bf q}_{\bf N}(1/C_{\bf N})$}
\label{sec:comp}

Because of \eqref{eq:liminfini}, the point $q=\exp(-\pi\cot(\pi/M_{\mathbf
N}))$ is a potential singularity of $\mathbf z_{\mathbf N}(q)$.
In Theorem~\ref{theo:4}.$(ii)$ we have shown that the radius of
convergence of $\mathbf z_{\mathbf N}(q)$ is at most 
${\bf q}_{\bf N}(1/C_{\bf N})$. It is therefore important to know
whether $\exp(-\pi\cot(\pi/M_{\mathbf N}))$ is less\break 
than 
${\bf q}_{\bf N}(1/C_{\bf N})$ or not. In this section, we show that
for $\Phi_{\mathbf N}\ge4$ we have in fact\break
$\exp(-\pi\cot(\pi/M_{\mathbf N}))>{\bf q}_{\bf N}(1/C_{\bf N})$,
which fits well with Conjecture~\ref{conj:1}.

\begin{lem} \label{lem:exp>q(1/C)}
Let $N_1,N_2, \ldots, N_k$ be positive integers, all of which at least $2$,
such that $\Phi=\Phi_{\mathbf N}\ge4$. Furthermore,
let $M=M_{\mathbf N}$, as before. 
Then 
\begin{equation} \label{eq:exp>q{1/C}} 
\exp(-\pi\cot(\pi/M))>\mathbf q_{\mathbf N}(1/C).
\end{equation}
\end{lem}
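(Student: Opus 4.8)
The plan is to reduce the asserted inequality $\exp(-\pi\cot(\pi/M))>\mathbf q_{\mathbf N}(1/C)$ to a comparison between $\log C$ and the critical angle $\pi\cot(\pi/M)$, bounding the two quantities involved by soft arguments and finishing with an elementary number‑theoretic check. First I would use the identity $\mathbf q_{\mathbf N}(1/C)=\exp\!\bigl(S/\mathbf F_{\mathbf N}(1/C)\bigr)$ recorded in the proof of Theorem~\ref{theo:2}.$(iii)$, where $S$ is as in \eqref{eq:lem3}; since $\Phi\ge4>2$, the number $\mathbf F_{\mathbf N}(1/C)$ is finite and positive, so the claim is equivalent to $-S>\pi\cot(\pi/M)\,\mathbf F_{\mathbf N}(1/C)$. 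Now, by the series expression for $S$ obtained just after \eqref{eq:lem3},
\[
-S=\sum_{m\ge0}\mathbf B_{\mathbf N}(m)\bigl(\log C-\mathbf H_{\mathbf N}(m)\bigr)C^{-m},
\]
and every summand is positive by \eqref{eq:HlogC}; retaining only the term $m=0$ gives $-S\ge\log C$. For $M=2$ this already finishes the proof, since $\pi\cot(\pi/2)=0$; so I may assume $M\ge3$, and it then suffices to establish $\log C>\pi\cot(\pi/M)\,\mathbf F_{\mathbf N}(1/C)$.

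Next I would estimate $\mathbf F_{\mathbf N}(1/C)$ from above. Writing $a_{i,j}:=r_{i,j}/N_j\in(0,1)$, one has $\mathbf F_{\mathbf N}(1/C)=\sum_{m\ge0}\prod_{i,j}(a_{i,j})_m/m!$, and log‑convexity of $\Gamma$ gives $(a)_m/m!=\Gamma(m+a)/(\Gamma(a)\Gamma(m+1))\le m^{a-1}/\Gamma(a)$ for $m\ge1$; using $\sum_{i,j}a_{i,j}=\Phi/2$ (by \eqref{eq:Phi/2}) this yields $\prod_{i,j}(a_{i,j})_m/m!\le m^{-\Phi/2}\big/\prod_{i,j}\Gamma(a_{i,j})$, hence $\mathbf F_{\mathbf N}(1/C)\le 1+\zeta(2)\big/\prod_{i,j}\Gamma(a_{i,j})$. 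Because $M=\max_jN_j$, the residues $1$ and $M-1$ occur among the $r_{i,j}$ for the component equal to $M$, while all other factors satisfy $\Gamma(a_{i,j})\ge\Gamma(1)=1$, so by the reflection formula $\prod_{i,j}\Gamma(a_{i,j})\ge\Gamma(1/M)\Gamma(1-1/M)=\pi/\sin(\pi/M)\ge M$. Combining this with $\pi\cot(\pi/M)<M$ (valid since $\cot x<1/x$ on $(0,\pi/2)$) and $\cos(\pi/M)<1$, I obtain
\[
\pi\cot(\pi/M)\,\mathbf F_{\mathbf N}(1/C)\le\pi\cot(\pi/M)+\zeta(2)\cos(\pi/M)<M+2 .
\]

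It therefore remains to check that $\log C\ge M+2$ for every admissible $\mathbf N$ with $M\ge3$, where $\log C=\sum_j\log C_{N_j}$ and $\log C_N=\varphi(N)\bigl(\log N+\sum_{p\mid N}\tfrac{\log p}{p-1}\bigr)$. If $\varphi(M)<4$, i.e.\ $M\in\{3,4,6\}$, then $\mathbf N$ necessarily carries further components contributing at least $2\log C_2=2\log4$, and $\log C_3+2\log4=\log432$, $\log C_4+2\log4=\log1024$, $\log C_6+2\log4=\log6912$ each exceed $M+2$; if $\varphi(M)\ge4$, then $\log C\ge\log C_M$, and $\log C_M>M+2$ follows by direct computation for the small values of $M$ and, for large $M$, from $\log C_M\ge\varphi(M)\log M$ together with $\varphi(M)\log M/M\to\infty$ (equivalently, a classical lower bound on $\varphi$). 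This last step — making the bound on $\log C$ uniform over the infinite family of admissible $\mathbf N$ — is the one delicate point, requiring a fussy but elementary case analysis (prime, prime‑power, and general $M$, plus a handful of small moduli checked by hand); by contrast the two inputs $-S\ge\log C$ and the $\zeta/\Gamma$‑estimate for $\mathbf F_{\mathbf N}(1/C)$ are soft and are what make the reduction go through.
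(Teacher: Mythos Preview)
Your argument follows the same skeleton as the paper's proof: both reduce the inequality to $-S>\pi\cot(\pi/M)\,\mathbf F_{\mathbf N}(1/C)$ via the identity $\mathbf q_{\mathbf N}(1/C)=\exp(S/\mathbf F_{\mathbf N}(1/C))$, both keep only the $m=0$ term to get $-S>\log C$, both replace $\pi\cot(\pi/M)$ by $M$, and both then estimate $\mathbf F_{\mathbf N}(1/C)$ and finish with a residual number-theoretic check on $\log C$.

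The genuine difference lies in the estimate for $\mathbf F_{\mathbf N}(1/C)$. The paper uses explicit Stirling bounds \eqref{eq:gammaungl} to obtain \eqref{eq:U5}, leading to the criterion \eqref{eq:Krit} and a computer verification of the (finitely many) exceptional tuples with $k\le 14$. Your route via log-convexity of $\Gamma$ (i.e.\ $\Gamma(m+a)/\Gamma(m+1)\le m^{a-1}$ for $0<a<1$) and the reflection formula is tidier and yields the very clean target $\log C>M+2$; this makes the residual check depend only on $M$ rather than on the full tuple $\mathbf N$, which is a real simplification. The price is that you still owe the uniform verification of $\log C_M>M+2$ for all $M$ with $\varphi(M)\ge 4$: your asymptotic argument via $\varphi(M)\log M/M\to\infty$ is fine for large $M$, but the crossover range (e.g.\ $M=8,10,12,30$, where $\varphi(M)\log M$ alone is close to or below $M+2$ and one must use the extra factor $\sum_{p\mid M}\frac{\log p}{p-1}$) genuinely needs the case-by-case computation you allude to. Likewise, your claim that for $M\in\{3,4,6\}$ the remaining components contribute at least $2\log 4$ deserves one line of justification (minimising $\log C_N/\varphi(N)$ over $N\ge 2$, attained at $N=2$). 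None of this is a gap in the strategy, but you should be aware that the paper, too, could not avoid a finite explicit check at the end.
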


\begin{proof}
From \cite[inequality on top of p.~157, which, as the proof shows,
  remains valid for {\it real\/} $n$]{SasvAA} we know that for $x\ge1$ we have
\begin{equation} \label{eq:gammaungl}
\Gamma(x+1)=\sqrt{2\pi x}\left(\frac {x} {e}\right)^x e^{\la_x},\quad 
\text{with }\frac {1} {12x+1}<\la_x<\frac {1} {12x}. 
\end{equation}
We use these effective bounds on the gamma function to provide an
upper bound for $\mathbf B_{\mathbf N}(m)$. Let us first suppose that
$m\ge2$. Then we have
\begin{align} 
\notag
\mathbf B_{\mathbf N}(m)&=\prod_{j=1}^k \mathbf{B}_{N_j}(m)
=C^m \prod_{j=1}^k \prod_{i=1}^{\varphi(N_j)} 
\frac{(r_{i,j}/N_j)_m}{m!}
=C^m \prod_{j=1}^k \prod_{i=1}^{\varphi(N_j)} 
\frac{\Ga\left(m+\frac {r_{i,j}} {N_j}\right)}
{\Ga\left(\frac {r_{i,j}} {N_j}\right)\,\Ga(m+1)}\\
&<C^m 
e^{\La_m}
\prod_{j=1}^k \prod_{i=1}^{\varphi(N_j)} 
\frac {1} {\Ga(r_{i,j}/N_j)}
\left(\frac {m+\frac {r_{i,j}} {N_j}-1} {m}\right)^{m+\frac {1} {2}}
\left(\frac {m+\frac {r_{i,j}} {N_j}-1} {e}\right)^{\frac {r_{i,j}}
  {N_j}-1},
\label{eq:U1}
\end{align}
where
$$
\La_m=\sum_{j=1}^k \sum_{i=1}^{\varphi(N_j)}\left(\frac {1} {12(m+\frac
  {r_{i,j}} {N_j}-1)}-
\frac {1} {12m+1}\right).
$$
The quantity $\La_m$ may be estimated from above as follows:
\begin{align} \notag
\La_m&=\sum_{j=1}^k \sum_{i=1}^{\varphi(N_j)}\left(\frac {13-12
\frac {r_{i,j}} {N_j}} 
{12(m+\frac
  {r_{i,j}} {N_j}-1)(12m+1)}\right)
\le\sum_{j=1}^k \sum_{i=1}^{\varphi(N_j)}\left(\frac {13-12
\frac {r_{i,j}} {N_j}} 
{12(m-1)(12m+1)}\right)\\
&\le\sum_{j=1}^k \varphi(N_j)\left(\frac {13-12
\frac {1} {2}} 
{12(m-1)(12m+1)}\right)
\label{eq:U2}
\le \frac {7\Phi} 
{12(m-1)(12m+1)}\le \frac {7} {300}\Phi. 
\end{align}
On the other hand, using the well-known elementary inequality 
$\left(1+\frac {x} {m}\right)^m\le e^x$, valid for $x>-m$, we have
\begin{align} \notag
\left(\frac {m+\frac {r_{i,j}} {N_j}-1} {m}\right)^{m+\frac {1} {2}}
\left(\frac {m+\frac {r_{i,j}} {N_j}-1} {e}\right)^{\frac {r_{i,j}}
  {N_j}-1}&=
\left(1+\frac {\frac {r_{i,j}} {N_j}-1} {m}\right)^{m}
\frac {e^{1-\frac {r_{i,j}}
  {N_j}} \left(1+\frac {\frac {r_{i,j}} {N_j}-1} {m}\right)^{\frac {1} {2}}
}
{\left( {m+\frac {r_{i,j}} {N_j}-1} \right)^{1-\frac {r_{i,j}}
  {N_j}}}\\
&\le
\frac {1
}
{\left( {m-1} \right)^{1-\frac {r_{i,j}}
  {N_j}}}.
\label{eq:U3}
\end{align}
Finally, by the reflection formula (cf.\ \cite[Theorem~1.2.1]{AAR})
$$
\Ga(x)\Ga(1-x)=\frac {\pi} {\sin(\pi x)}
$$
for the gamma function, in the case that all $N_j$'s are different from
$2$ (in which case $\varphi(N_j)$ is always even) we get
\begin{equation} \label{eq:U4}
\prod_{j=1}^k \prod_{i=1}^{\varphi(N_j)} 
\frac {1} {\Ga(r_{i,j}/N_j)}
=
\prod_{j=1}^k \prod_{i=1}^{\varphi(N_j)/2} 
\frac {\sin\left(\pi r_{i,j}/N_j\right)} {\pi}
\le \pi^{-\Phi/2}.
\end{equation}
One can then see that the inequality above holds even if some of the
$N_j$'s should happen to equal $2$.
If \eqref{eq:U2}--\eqref{eq:U4} are 
substituted back in \eqref{eq:U1}, then the result is
\begin{align} \notag
\mathbf B_{\mathbf N}(m)&<C^m 
e^{\frac {7} {300}\Phi}
\pi^{-\Phi/2}
{\left( {m-1} \right)^{-
\sum_{j=1}^k \sum_{i=1}^{\varphi(N_j)} \big(1-\frac {r_{i,j}}
  {N_j}\big)}}\\
\label{eq:U5}
&<
C^m
e^{\frac {7} {300}\Phi}
\pi^{-\Phi/2}
\left( {m-1} \right)^{-\Phi/2},
\end{align}
where we used \eqref{eq:Phi/2} to obtain the last line.

The case of $m=1$ has to be treated separately. In that case, we see
that
\begin{align} 
\mathbf B_{\mathbf N}(1)&=\prod_{j=1}^k \mathbf{B}_{N_j}(1)
=C \prod_{j=1}^k \prod_{i=1}^{\varphi(N_j)} 
\frac{r_{i,j}}{N_j}
=C \prod_{j=1}^k \prod_{i=1}^{\varphi(N_j)/2} 
\frac{r_{i,j}(N-r_{i,j})}{N_j^2}
\le C\cdot 2^{-\Phi}
\label{eq:U6}
\end{align}
if all $N_j$'s are different from $2$. Again, it is readily seen that
the inequality also holds if some of the $N_j$'s should happen to
equal $2$.

We now combine \eqref{eq:U5} and \eqref{eq:U6} in order to estimate
$\mathbf{F}_{\mathbf{N}}(1/C)$:
\begin{align} 
\notag
\mathbf{F}_{\mathbf{N}}(1/C)&=
\sum_{m=0}^{\infty} 
\mathbf{B}_{\mathbf N}(m)
C^{-m}\\
\notag
&< 1+\frac {1} {2^{\Phi}}+
\left(\frac {e^{\frac {7} {300}}}
{\pi^{1/2}}\right)^\Phi
\sum_{m=2}^{\infty}
\left( {m-1} \right)^{-\Phi/2}\\
\notag
&< 1+\frac {1} {2^{\Phi}}+
\left(\frac {e^{\frac {7} {300}}}
{\pi^{1/2}}\right)^\Phi
\zeta(\Phi/2)\\
&< 1+\frac {1} {2^{\max\{4,k\}}}+
\left(\frac {e^{\frac {7} {300}}}
{\pi^{1/2}}\right)^{\max\{4,k\}}
\zeta\big(\max\{4,k\}/2\big).
\label{eq:U7}
\end{align}
In the sequel, we denote the quantity on the right-hand side of
\eqref{eq:U7} by $c_k$.

We are now in the position to establish the inequality
\eqref{eq:exp>q{1/C}} for ``large enough" $\mathbf N$. 
Namely, since
$$
\pi\cot(\pi/M)\le M
$$
and (see \eqref{eq:HlogC})
$$
{\bf G}_{\bf N}(1/C)-\log(C){\bf F}_{\bf N}(1/C)<-\log (C),
$$
it suffices to prove the inequality
\begin{equation*} 
\exp(-M)\ge \exp\left(-\frac {\log C} {\mathbf F_{\mathbf N}(1/C)}\right),
\end{equation*}
or, equivalently,
\begin{equation*}
\log C\ge M\cdot \mathbf F_{\mathbf N}(1/C).
\end{equation*}
We now make use of the estimation (cf.\ \cite[Theorem~8.8.7]{BaShAA})
\begin{equation} \label{eq:phiappr} 
\varphi(n)\ge\max\left\{1,\frac {n} {e^\ga\log\log n+\frac {3} {\log \log n}}\right\}
\end{equation}
for the totient function, where $\ga$ denotes Euler's constant. 
For convenience, let us write
$\overline\varphi(n)$ for the right-hand side of \eqref{eq:phiappr}.
Then, use of \eqref{eq:phiappr} in the definition of $C$ gives
$$
\log C\ge 
\sum _{j=1} ^{k}\log N_j^{\varphi(N_j)}=
\sum _{j=1} ^{k}\varphi(N_j)\log N_j\ge
\sum _{j=1} ^{k}\overline\varphi(N_j)\log N_j.
$$
If we put this
together with \eqref{eq:U7}, we see that the inequality
\eqref{eq:exp>q{1/C}} will be proved whenever
\begin{equation} \label{eq:Krit} 
\sum _{j=1} ^{k}\overline\varphi(N_j)\log N_j
\ge c_kM.
\end{equation}
It remains to consider the cases where \eqref{eq:Krit} does not hold.
We claim that this is only a finite number of cases. Indeed,
for fixed $k$, there can only be a finite number of $k$-tuples
$(N_1,N_2,\dots,N_k)$ for which \eqref{eq:Krit} is violated, since 
trivially $c_k$ is constant for fixed $k$,
and since $\overline\varphi(M)\log M$ grows faster than $M$. 
On the other hand, if $k\ge15$, then \eqref{eq:Krit} holds
automatically. For, we have
$$
\min_{M\ge2}\{\overline\varphi(M)\log M-c_{15}M\}\ge-9,
$$
and therefore (assuming that $M=N_1$ without loss of generality)
\begin{align*} 
\sum _{j=1} ^{k}\overline\varphi(N_j)\log N_j&=
\overline\varphi(M)\log M+
\sum _{j=2} ^{k}\overline\varphi(N_j)\log N_j\\
&\ge c_{15}M-9+(k-1)\log 2\\
&\ge c_{k}M-9+(k-1)\log 2\\
&\ge c_kM
\end{align*}
for $k\ge15$, where we used the simple fact that the $c_k$'s are
monotone decreasing in $k$ in the third line.

In summary, there is indeed only a finite number of cases left where
\eqref{eq:Krit} does not hold. For these cases, we have verified on
the computer that the inequality \eqref{eq:exp>q{1/C}} is satisfied.
For carrying out this verification, we used the estimation
\begin{align} \notag
\mathbf q_{\mathbf N}(1/C)
&=\exp\left(\frac {\sum_{m=0}^{\infty} 
\mathbf{B}_{\mathbf N}(m)
\big(\mathbf{H}_N(m)-\log C\big)C^{-m}} {
\sum_{m=0}^{\infty} 
\mathbf{B}_{\mathbf N}(m)
C^{-m}}\right)\\
\notag
&<\exp\left(\frac {\sum_{m=0}^{20} 
\mathbf{B}_{\mathbf N}(m)
\big(\mathbf{H}_N(m)-\log C\big)C^{-m}} {
\sum_{m=0}^{20} 
\mathbf{B}_{\mathbf N}(m)
C^{-m}+
e^{\frac {7} {300}\Phi}
\pi^{-\Phi/2}
\sum _{m=21} ^{\infty}\left( {m-1} \right)^{-\Phi/2}}\right)\\
&<\exp\left(\frac {\sum_{m=0}^{20} 
\mathbf{B}_{\mathbf N}(m)
\big(\mathbf{H}_N(m)-\log C\big)C^{-m}} {
\sum_{m=0}^{20} 
\mathbf{B}_{\mathbf N}(m)
C^{-m}+
e^{\frac {7} {300}\Phi}
\pi^{-\Phi/2}
\left(\frac {\Phi} {2}-1\right)^{-1}19^{-\frac {\Phi} {2}+1}}
\right)
\label{eq:U8}
\end{align}
(with the third line following again from \eqref{eq:HlogC}
and \eqref{eq:U5}, and the
last line from\break 
$\sum _{m=21} ^{\infty}\left( {m-1} \right)^{-\Phi/2}<
\int _{19} ^{\infty}x^{-\Phi/2}\,\dd x$), and we actually
compared the left-hand side of \eqref{eq:exp>q{1/C}} with the upper
bound on the right-hand side given in \eqref{eq:U8}. This completes
the proof of the lemma.
\end{proof}

\section{Proof of Theorem~\ref{theo:5}: the cases $\Phi_{\mathbf N}=1,2,3$}
\label{sec:Phi}

The case of $\Phi=1$ is a trivial case. We provide the details
nevertheless for the sake of completeness. For the cases $\Phi=2,3$,
it is well documented in the literature that 
the corresponding functions $\mathbf z_{\mathbf N}(q)$ live in the world of
modular forms. We refer the reader to \cite{StillAA,zudram} for reviews of the
corresponding classical theory.
Despite of this, it seems that the questions that
we treat in the present paper --- namely questions concerning the
analytic nature of the function $\mathbf z_{\mathbf N}(q)$ --- 
have not been recorded in sufficient
detail to extract complete information about the singularities of the
function, for example, 
and, thus, of the radius of convergence of the Taylor series at
$q=0$. 

The purpose of this section is to compile the relevant facts, to provide
a coherent overview, and to fill possible gaps whenever necessary.

Since we shall make use of it below, we record here some basic facts
on modular forms. Given a subgroup $\Ga$ of the modular group
$SL_2(\mathbb Z)$ and a non-negative integer $k$, 
a function $f(\tau)$ from the complex upper half plane
$\mathbb H$ to the complex numbers is called a {\it modular form
of weight $k$ for
$\Ga$} if it is meromorphic on $\mathbb H$ and
\begin{equation} \label{eq:action}
f(T(\tau))=(c\tau+d)^k f(\tau) 
\end{equation}
for all elements $T=\left(\begin{smallmatrix}
    a&b\\c&d\end{smallmatrix}\right)$ of $\Ga$, where the action of
$T$ is defined by $T(\tau)=(a\tau+b)/(c\tau+d)$. Modular forms of
weight $k$ for $\Ga$ satisfy the {\it valence formula}
(cf.\ \cite[Theorem~4.1.4]{RankAA})
\begin{equation} \label{eq:valform}
\sum _{\zeta\in \mathbb F} ^{}\frac {\ord(f,\zeta)} 
{\vert \stab_\zeta(\widehat\Ga)\vert} = \frac {\vert\widehat{SL_2(\mathbb
    Z)}/\widehat \Ga\vert } {12}k.
\end{equation}
Here, 
\begin{enumerate}
\item[$-$]
$\mathbb F$ is a fundamental region for the action of $\Ga$ on
$\mathbb H$ to which one adds the cusps (a system of representatives
of the orbit of $\infty$ under the action of $SL_2(\mathbb Z)$ when
restricted to $\Ga$);
\item[$-$]
 $\widehat\Ga$ is the {\it mapping group} corresponding to
$\Ga$, i.e., the group arising from $\Ga$ upon identification of $T$
and $-T$ if both of them should be contained in $\Ga$ (the action
\eqref{eq:action} on $\mathbb H$ of $T$ and $-T$ is identical); 
\item[$-$]
$\ord(f,\zeta)$ is the usual order of $f$ at $\zeta$ if
$\zeta$ is not a cusp;
\item[$-$]
if $\zeta$ is a cusp, $\zeta=T\infty$ for $T=\left(\begin{smallmatrix}
    a&b\\c&d\end{smallmatrix}\right)\in SL_2(\mathbb Z)$, then
$\ord(f,\zeta)$ is defined as the order of the series
expansion of $(c\tau +d)^{-k}f(T(\tau))$ in $\widetilde q=\exp(2i\pi \tau/n_T)$ 
(as a Laurent series in $\widetilde q$), where $k$ is the weight of $f$, and
where $n_T$ is the least positive integer such that $\left(\begin{smallmatrix}
    1&1\\0&1\end{smallmatrix}\right)^{n_T}\in T\Ga T^{-1}$; 
\item[$-$]
$\stab_\zeta(\widehat\Ga)$ is the subgroup of $\widehat\Ga$ consisting of the
elements fixing $\zeta$.
\end{enumerate}

The stabiliser $\stab_i{\widehat\Ga}$ of $i$ in $\widehat\Ga$ can be
$2$ or $1$ depending on whether or not $V=\left(\begin{smallmatrix}
    0&-1\\1&0\end{smallmatrix}\right)$ or $-V$ is in $\Ga$.
The stabiliser $\stab_\rho{\widehat\Ga}$ of $\rho=\exp(2i\pi /3)$ 
in $\widehat\Ga$ can be
$3$ or $1$ depending on whether or not $P=\left(\begin{smallmatrix}
    0&-1\\1&1\end{smallmatrix}\right)$ or $-P$ is in $\Ga$.
All other stabilisers consist only of one element.

Below we shall make frequent use of the following special functions.
The {\it Eisenstein series} (in Ramanujan's notation) 
$Q(q)$ and $R(q)$ are defined by
$$
Q(q)= 1+240
\sum _{n=1} ^{\infty}\si_3(n)q^n\quad \quad \text{and}\quad \quad 
R(q)= 1-504
\sum _{n=1} ^{\infty}\si_5(n)q^n
$$
and $\si_k(n)=
\sum _{d\mid n} ^{}d^k$. 
The Eisenstein series $E_4(\tau):=Q\big(\exp(2i\pi \tau)\big)$ 
is a modular form of weight $4$ for the full
modular group $SL_2(\mathbb Z)$, while $E_6(\tau):=R\big(\exp(2i\pi \tau)\big)$ 
is a modular form of weight $6$ for the same group.
It is well-known (see \cite[p.~143]{SerrAA}) that the zeroes of 
$Q(q)$ are of the form
$q=\exp(2i\pi \tau)$, where $\tau$ runs through the elements of the
orbit of $\rho$ under $SL_2(\mathbb Z)$. All of these are simple
zeroes. Similarly (see \cite[p.~143]{SerrAA}), 
$R(q)$ has only simple zeroes which are of the form
$q=\exp(2i\pi \tau)$, where $\tau$ runs through the elements of the
orbit of $i$ under $SL_2(\mathbb Z)$. The unit circle is a natural
boundary for both $Q(q)$ and $R(q)$.

The {\it Dedekind--Klein $j$-invariant} is given by 
$$
j(\tau)=1728\frac {E_4^3(\tau)} {E_4^3(\tau)-E_6^2(\tau)}.
$$
We shall rather use the variant
\begin{equation} \label{eq:Jdef} 
J(q)=\frac {Q^3(q)} {Q^3(q)-R^2(q)},
\end{equation}
so that $j(\tau)=1728 J(\exp(2i\pi\tau))$.
The function $j(\tau)$ is a modular form of
weight $0$ for $SL_2(\mathbb Z)$. $J(q)$ is meromorphic in the unit disk 
with a unique pole at $q=0$, which is a simple pole, and the unit
circle is a natural boundary.

The {\it Dedekind $\eta$-function} is defined by 
$$
\eta(\tau)=\exp(i\pi\ta/12)
\prod _{n=1} ^{\infty}(1-\exp(2i\pi n\ta)).
$$
We shall rather use the variant
$$
H(q)=q^{1/24}
\prod _{n=1} ^{\infty}(1-q^n).
$$
The function $\eta^{24}(\tau)$ can be expressed in
terms of Eisenstein series in the form
$$
\eta^{24}(\tau)=E_4^3(\tau)-
E_6^2(\tau),
$$
and it is therefore a modular form of
weight $12$ for $SL_2(\mathbb Z)$.

Two of Jacobi's theta functions will sometimes appear: the function
$\th_2(q)$ defined by
$$
\th_2(q)=
\sum _{n=-\infty} ^{\infty}q^{(n+\frac {1} {2})^2}=
2q^{1/4}
\prod _{j=1} ^{\infty}(1-q^{2j})(1+q^{2j})^2
$$
(the equality of the two expressions above follows from Jacobi's
triple product identity, see \cite[Theorem~10.4.1]{AAR}), and the
function 
$\th_3(q)$ defined by
$$
\th_3(q)=
\sum _{n=-\infty} ^{\infty}q^{n^2}=
\prod _{j=1} ^{\infty}(1-q^{2j})(1+q^{2j-1})^2
$$
(with again Jacobi's triple product identity explaining the equality).

We shall also frequently use the fact that, if $f(\tau)$ is a modular
form of weight $k$ for $SL_2(\mathbb Z)$, then $f(N\tau)$ is a modular
form of weight $k$ for $\Ga_0(N)$, where $\Ga_0(N)$ is the subgroup of
$SL_2(\mathbb Z)$ consisting of all matrices $\left(\begin{smallmatrix}
    a&b\\c&d\end{smallmatrix}\right)$ with $c\equiv0$~mod~$N$.
Furthermore, we have $\vert\widehat{SL_2(\mathbb
    Z)}/\widehat \Ga_0(2)\vert=3$ and 
$\vert\widehat{SL_2(\mathbb
    Z)}/\widehat \Ga_0(3)\vert=4$ (see \cite[(1.4.23)]{RankAA}).

\medskip
We are now ready to discuss all cases in which $\Phi=1,2,3$.

\subsection{$\Phi=1$}
There is only a single example in which $\Phi=1$, namely if $\mathbf
N=(2)$. In that case (see \eqref{eq:Phi=1}), we
have $\mathbf q_{(2)}(z)=(1-\sqrt{1-4z})^2/(4z)$ and $\mathbf
z_{(2)}(q)=q/(1+q)^2$. Hence, 
$\mathbf q_{(2)}(z)$ has exactly one singularity at $z=1/4$,
which is of square
root type, and the radius of convergence is $1/4=1/C$. Moreover,
$\mathbf z_{(2)}(q)=q/(1+q)^2$ has exactly one singularity at
$q=-1$. Its radius of
convergence is $1=\exp(-\pi\cot(\pi/2))$.

\subsection{$\Phi=2$}
\label{sec:Phi=2}
In this case, according to Theorem~\ref{theo:2}.$(ii)$ and 
the remark after the statement of
the theorem, the point $z=1/C$ is always a singularity of
$\mathbf q_{\mathbf N}(z)$, with $\mathbf q_{\mathbf N}(1/C)=1$.\break
Moreover, because of \eqref{eq:1/C2}, this singularity is of 
``$1/\log$''-type. Consequently, by Lemma~\ref{lem:qzess}, 
the mirror map $\mathbf z_{\mathbf
  N}(q)$ has an essential singularity at $q=\mathbf q_{\mathbf
N}(1/C)=1$.

\smallskip
{\sc Case $\mathbf N=(3)$}. Here, we have
$$
\mathbf F_{(3)}(z)=
\sum _{n=0} ^{\infty}\frac {(3n)!} {n!^3}z^n=
{}_{2}F_1
 \!\left [ \begin{matrix} {1/3,2/3}\\ {
   1}\end{matrix} ; {\displaystyle 27z}\right ].
$$
It is well-known (cf.\ e.g.\ \cite[(23)]{zudram}) that
\begin{equation} \label{eq:z3} 
\mathbf z_{(3)}(q)=(Q_3^{3/2}(q)-R_3(q))/(54Q_3^{3/2}(q)),
\end{equation}
 where
$Q_3(q)=(Q(q)+9Q(q^3))/10$ and $R_3(q)=(R(q)+27R(q^3))/28$.
An alternative expression (cf.\ \cite[Theorem~2.3]{BoBoAA}) is given by
\begin{align*} 
\mathbf  z_{(3)}(q)&=
\frac {\left(\th_2(q^{1/3})\th_2(q)+\th_3(q^{1/3})\th_3(q)
-\th_2(q)\th_2(q^3)-\th_3(q)\th_3(q^3)\right)^3}
{216\left(\th_2(q)\th_2(q^3)+\th_3(q)\th_3(q^3)\right)^3}\\
&=
\frac {q} {\displaystyle 27q+
\prod _{n=1} ^{\infty}\frac {(1-q^n)^{12}} {(1-q^{3n})^{12}}}=
\frac {1} {27+\dfrac {H^{12}(q)} {H^{12}(q^3)}}.
\end{align*}
The second equality can be easily derived
using standard reasoning that involves an estimation of the orders of
both expressions at the cusps of a fundamental region of $\Ga_0(12)$,
application of the valence formula \eqref{eq:valform}, and a
verification that sufficiently many coefficients in the power series
expansion in $q$ agree; see \cite[Sections~3 and 4]{GarvAA} for a
detailed description of this kind of argument.

Again alternatively, $\mathbf z_{(3)}(q)$ is
solution of the equation (cf.\ \cite[Sec.~5.1, p.~176]{lianyau1})
\begin{equation} \label{eq:Jz3}
1728J(q)=\frac {(1+216\mathbf z_{(3)}(q))^3} {\mathbf
  z_{(3)}(q)(1-27\mathbf z_{(3)}(q))^3}.
\end{equation}
By Lemma~\ref{lem:expinf} (with $M=3$, $L=3$, $R=2/3$)
and Lemma~\ref{lem:qz}, we know that $\mathbf z_{(3)}(q)$
has a pole of order $3$ at $q=-\exp(-\pi/\sqrt 3)$. (This corrects
the statement in the paragraph containing (3) in \cite{zudcast}.)
Hence, by \eqref{eq:z3},
$q=-\exp(-\pi/\sqrt 3)$ must be a zero of $Q_3(q)$ of order at least
$2$. 
Since the function $E_4(\tau)$ is a modular form
of weight $4$ for $SL_2(\mathbb Z)$, the function $\widetilde
Q_3(\tau):=Q_3\big(\exp(2i\pi
\tau)\big)$ is a modular form of weight $4$ for $\Ga_0(3)$.
It is easy to see that the orbit of $\rho$ under $SL_2(\mathbb Z)$
splits under the action of $\Ga_0(3)$ into three orbits: one orbit
containing $\rho$, another orbit containing $\rho+1=\frac {1}
{2}+\frac {i\sqrt3} {2}$, and another orbit containing 
$\rho_1=\tfrac {1} {2}+\tfrac {i} {2\sqrt3}$. The reader should note
that $\exp(2i\pi\rho_1)=-\exp(-\pi/\sqrt3)$.
If we now apply the valence formula \eqref{eq:valform} to $\widetilde 
Q_3(\tau)$, then, since $\vert \stab_{\rho_1}(\widehat\Ga_0(3))\vert=3$,
while $\vert \stab_{\rho}(\widehat\Ga_0(3))\vert=1$
and $\vert \stab_{i}(\widehat\Ga_0(3))\vert=1$, we obtain
$$\frac {1} {3}\ord\big(\widetilde Q_3,\rho_1\big)+
\sum _{\zeta\in \mathbb F\backslash \{\rho_1\}} ^{}{\ord(\widetilde Q_3,\zeta)} 
=\frac {4} {3}.$$
We know that $\widetilde Q_3(\tau)$ 
is analytic in the upper half plane, that
$\ord(\widetilde Q_3,\zeta)=0$ for all cusps $\zeta$, and that
$\ord(\widetilde Q_3,\rho_1)\ge2$. 
Therefore, if the above equation wants to hold then
necessarily $\ord(\widetilde Q_3,\rho_1)=4$, and all other orders must be zero.
(As a by-product, we see that $R_3(q)$ has a zero of order $3$ at
$q=-\exp(-\pi/\sqrt3)$. We remark that the valence formula applied to
$R_3(q)$ implies that there exists another, simple zero in a fundamental
region of $\widehat\Ga_0(3)$. We claim that this is
$q=\exp(-2\pi\sqrt{3})$ and all elements in its orbit. To see this,
one first observes that \eqref{eq:Jdef} and the previously quoted fact
that $Q\big(-\exp(-\pi\sqrt{3})\big)=0$ imply that
$J\big(-\exp(-\pi\sqrt{3})\big)=0$. Combining this with
the modular equation of level $2$, which provides a polynomial
relation between $J(q)$ and $J(q^2)$ (cf.\ \cite[expression for
  $H_2(x,y)$ given on p.~192]{HerrAA}), we
infer that
$J\big(\exp(-2\pi\sqrt{3})\big)=125/4$. If this is substituted in
\eqref{eq:Jz3}, one obtains four possible values for
$z_{(3)}\big(\exp(-2\pi\sqrt{3})\big)$, one of which is
$1/54$. By a numerical calculation, the other three values can be
ruled out. Finally, by inserting 
$z_{(3)}\big(\exp(-2\pi\sqrt{3})\big)=1/54$ in \eqref{eq:z3},
we see that $R\big(\exp(-2\pi\sqrt{3})\big)=0$, as we claimed.)

In summary, the above arguments prove that $\mathbf z_{(3)}(q)$ is
meromorphic on the unit disk $\vert q\vert<1$ with poles of third
order at all points $\exp(2i\pi \tau)$, where $\tau$ is an
element of the orbit of $\rho_1$ under $\Ga_0(3)$.
It is not difficult to see that $\exp(2i\pi\rho_1)=-\exp(-\pi/\sqrt3)$ 
is the element of smallest
modulus in this set, and that the other elements become dense near
the boundary $\vert q\vert=1$. 
In particular, the Taylor
expansion of $\mathbf z_{(3)}(q)$ has radius of convergence 
$\exp(-\pi/\sqrt 3)=\exp(-\pi\cot(\pi/3))=0.163033\dots$,
and $\mathbf z_{(3)}(q)$ has $\vert q\vert=1$ as natural boundary.

\smallskip
{\sc Case $\mathbf N=(4)$}. Here, we have
$$
\mathbf F_{(4)}(z)=
\sum _{n=0} ^{\infty}\frac {(4n)!} {(2n)!\,n!^2}z^n=
{}_{2}F_1
 \!\left [ \begin{matrix} {1/4,3/4}\\ {
   1}\end{matrix} ; {\displaystyle 64z}\right ].
$$
It is known (cf.\ \cite[Theorem~2.6]{BoBoAA}) that
\begin{equation} \label{eq:z4} 
 \mathbf z_{(4)}(q)=\frac {\th_2^4(q)\th_3^4(q)} 
{16\,(\th_2^4(q)+\th_3^4(q))^2}=
\frac {q} {\displaystyle 64q+
\prod _{n=1} ^{\infty}\frac {(1-q^n)^{24}} {(1-q^{2n})^{24}}}=
\frac {1} {64+\dfrac {H^{24}(q)} {H^{24}(q^2)}}.
\end{equation}
(The second equality can again be proven by arguments as in
\cite[Sections~3 and 4]{GarvAA}.)

Again alternatively, it is well-known that
$\mathbf z_{(4)}(q)$ is
solution of the equation (cf.\ \cite[Sec.~5.1, p.~176]{lianyau1})
$$
1728J(q)=\frac {(1+192\mathbf z_{(4)}(q))^3} {\mathbf
  z_{(4)}(q)(1-64\mathbf z_{(4)}(q))^2}.
$$
By Lemma~\ref{lem:expinf} (with $M=4$, $L=4$, $R=3/4$)
and Lemma~\ref{lem:qz}, we know that $\mathbf z_{(4)}(q)$
has a pole of order $2$ at $q=-\exp(-\pi)$. 
Hence, by \eqref{eq:z4},
$q=-\exp(-\pi)$ must be a zero of $64+\frac {H^{24}(q)}
{H^{24}(q^2)}$ of order $2$. 
The function $f(\tau):=64+\frac {H^{24}(\exp(2i\pi \tau))}
{H^{24}(\exp(4i\pi \tau))}$ is a modular form
of weight $0$ for $\Ga_0(2)$.
It is easy to see that the orbit of $i$ under $SL_2(\mathbb Z)$
splits under the action of $\Ga_0(2)$ into two orbits: one orbit
containing $i$ and another orbit containing 
$\de_1=-\tfrac {1} {2}+\tfrac {i} {2}$. The reader should note
that $\exp(2i\pi\de_1)=-\exp(-\pi)$.
If we now apply the valence formula \eqref{eq:valform} to $f(\tau)$, 
then, since $\vert \stab_{\de_1}(\widehat\Ga_0(2))\vert=2$,
while $\vert \stab_{i}(\widehat\Ga_0(2))\vert=1$,
$\ord_\infty(f)=-1$,
$\ord_{V\infty}(f)=\ord_0(f)=0$,
and $\vert \stab_{i}(\widehat\Ga_0(2))\vert=1$, we obtain
$$-1+\frac {1} {2}\ord\big(f,\de_1\big)+
\sum _{\zeta\in \mathbb F\backslash \{\de_1,\infty,V\infty\}} ^{}{\ord(f,\zeta)} 
=0.$$
We know that $f$ is analytic in the upper half plane and that
$\ord(f,\de_1)=2$. 
Therefore, if the above equation wants to hold then
necessarily all other orders must be zero.

In summary, the above arguments prove that $\mathbf z_{(4)}(q)$ is
meromorphic on the unit disk $\vert q\vert<1$ with poles of second
order at all points $\exp(2i\pi \tau)$, where $\tau$ is an
element of the orbit of $\de_1$ under $\Ga_0(2)$.
It is not difficult to prove that $\exp(2i\pi\de_1)=-\exp(-\pi)$ 
is the element of smallest
modulus in this set, and that the other elements become dense near
the boundary $\vert q\vert=1$. 
In particular, the Taylor
expansion of $\mathbf z_{(4)}(q)$ has radius of convergence 
$\exp(-\pi)=\exp(-\pi\cot(\pi/4))=0.0432139\dots$,
and $\mathbf z_{(4)}(q)$ has $\vert q\vert=1$ as natural boundary.

\smallskip
{\sc Case $\mathbf N=(6)$}. Here, we have
$$
\mathbf F_{(6)}(z)=
\sum _{n=0} ^{\infty}\frac {(6n)!} {(3n)!\,(2n)!\,n!}z^n=
{}_{2}F_1
 \!\left [ \begin{matrix} {1/6,5/6}\\ {
   1}\end{matrix} ; {\displaystyle 432z}\right ].
$$
It is well-known that $\mathbf z_{(6)}(q)$ is
solution of the equation (cf.\ \cite[Sec.~5.1, p.~176]{lianyau1})
\begin{equation} \label{eq:z6} 
1728J(q)=\frac {1} {\mathbf
  z_{(6)}(q)(1-432\mathbf z_{(6)}(q))}.
\end{equation}
By Lemma~\ref{lem:expinf} (with $M=6$, $L=6$, $R=5/6$)
and Lemma~\ref{lem:qz}, we know that $\mathbf z_{(6)}(q)$
has an algebraic branch point with exponent $-3/2$ at $q=-\exp(-\pi\sqrt 3)$. 
More precisely, since, by solving \eqref{eq:z6}, we have
$$\mathbf z_{(6)}(q)=\frac {1} {864}\left(1-\sqrt{\frac {J(q)-1} 
{J(q)}}\right),$$
the only singularities of $\mathbf z_{(6)}(q)$ in the interior of
the unit disk can occur at points where
$J(q)=0$ or $J(q)=1$. These are points $q$ where $Q(q)=0$,
respectively where $R(q)=0$. The
corresponding values of $q$ are $q=\exp(2i\pi\tau)$, where $\tau$ runs
through the elements of the orbit of $\rho$ under $SL_2(\mathbb Z)$,
respectively the elements of the orbit of $i$ under $SL_2(\mathbb Z)$.
However, since $R(q)$ appears as a square in the
definition of $J(q)$, each point $q=\exp(2i\pi\tau)$, where $\tau$ is
in the orbit of $i$, is a zero of even
order of $J(q)-1$, whence $\sqrt{J(q)-1}$ is analytic at these
points. 

In summary, 
the singularities of $\mathbf z_{(6)}(q)$ in the interior of the
unit disk are $q=\exp(2i\pi\tau)$, where $\tau$ runs through the
elements of the orbit of $\rho$ under $SL_2(\mathbb Z)$. 
Each of them is a branch point.
It is not difficult to prove that $\exp(2i\pi\rho)=-\exp(-\pi\sqrt3)$ 
is the element of smallest
modulus in this set, and that the other elements become dense near
the boundary $\vert q\vert=1$. 
In particular, the Taylor
expansion of $\mathbf z_{(6)}(q)$ has radius of convergence 
$\exp(-\pi\sqrt 3)=\exp(-\pi\cot(\pi/6))=0.00433342\dots$,
and $\mathbf z_{(6)}(q)$ has $\vert q\vert=1$ as natural boundary.

\smallskip
{\sc Case $\mathbf N=(2,2)$}. Here, we have
$$
\mathbf F_{(2,2)}(z)=
\sum _{n=0} ^{\infty}\frac {(2n)!^2} {n!^4}z^n=
{}_{2}F_1
 \!\left [ \begin{matrix} {1/2,1/2}\\ {
   1}\end{matrix} ; {\displaystyle 16z}\right ].
$$
It is well-known (cf.\ \cite[Theorem~2.2]{BoBoAA}) that 
\begin{equation} \label{eq:z22} 
\mathbf z_{(2,2)}(q)=\frac {\th_2^4(q)} {16\,\th_3^4(q)}=q
\prod _{n=1} ^{\infty}\frac {(1-q^{4n})^8} {(1-(-q)^n)^8}
=e^{i\pi/3}\frac {H^8(q^4)} {H^8(-q)}.
\end{equation}
(In this case, the second equality results upon minor simplification
from the product expressions for $\th_2(q)$ and $\th_3(q)$.)

Alternatively, $\mathbf z_{(2,2)}(q)$ is
solution of the equation (cf.\ \cite[Sec.~5.1, p.~176]{lianyau1})
\begin{equation} \label{eq:z2,2} 
1728J(q)=\frac {(1+224z_{(2,2)}(q)+256z_{(2,2)}^2(q))^3} {\mathbf
  z_{(2,2)}(q)(1-16\mathbf z_{(2,2)}(q))^4}.
\end{equation}
From \eqref{eq:z22}, it is evident that the radius of convergence of
$\mathbf z_{(2,2)}(q)$ is $1=\exp(-\pi\cot(\pi/2))$, and 
that $\mathbf z_{(2,2)}(q)$ has $\vert q\vert=1$ as natural boundary.
We remark that this is consistent with Lemma~\ref{lem:expinf}.$(iii)$
and Lemma~\ref{lem:qz}.$(iii)$, which imply that $\mathbf
  z_{(2,2)}(q)$ has an essential singularity at
  $q=-\exp(-\pi\cot(\pi/2))=-1$.  

\subsection{$\Phi=3$}
\label{sec:Phi=3}
In this case, according to Theorem~\ref{theo:2}.$(ii)$,
the point $z=1/C$ is always a singularity of
$\mathbf q_{\mathbf N}(z)$. Furthermore, because of 
Lemma~\ref{lem:exp1/C}.$(ii)$ (with $d=0$), this singularity is of
square root type.
Consequently, according to Lemma~\ref{lem:qz2} (with $d=0$),
the mirror map $\mathbf z_{\mathbf N}(q)$ is analytic at $\mathbf
q_{\mathbf N}(1/C)$, with $\mathbf z'_{\mathbf N}(\mathbf
q_{\mathbf N}(1/C))=0.$

\smallskip
{\sc Case $\mathbf N=(2,3)$}. Here, we have
$$
\mathbf F_{(2,3)}(z)=
\sum _{n=0} ^{\infty}\frac {(2n)!\,(3n)!} {n!^5}z^n=
{}_{3}F_2
 \!\left [ \begin{matrix} {1/2,1/3,2/3}\\ {
  1, 1}\end{matrix} ; {\displaystyle 108z}\right ]=
{}_{2}F_1
 \!\left [ \begin{matrix} {1/3,1/6}\\ {
   1}\end{matrix} ; {\displaystyle 108z}\right ]^2.
$$
The identity between the hypergeometric series is a special case of
Clausen's formula (cf.\ \cite[(2.5.7)]{SlatAC})
\begin{equation} \label{eq:Clausen} 
{}_{3}F_2
 \!\left [ \begin{matrix} {2a,2b,a+b}\\ {
   2a+2b, a+b+\frac {1} {2}}\end{matrix} ; {\displaystyle z}\right ]
=
{}_{2}F_1
 \!\left [ \begin{matrix} {a,b}\\ {
   a+b+\frac {1} {2}}\end{matrix} ; {\displaystyle z}\right ]^2.
\end{equation}
It is well-known (cf.\ e.g.\ \cite[(23)]{zudram}) that
$\mathbf z_{(2,3)}(q)=(Q_3^{3}(q)-R^2_3(q))/(108Q_3^{3}(q))$, where
$Q_3(q)$ and $R_3(q)$ are defined as in the case $\mathbf N=(3)$.
From the considerations concerning the zeroes of 
$Q_3(q)$ and $R_3(q)$ in the case $\mathbf N=(3)$,
we conclude that $\mathbf z_{(2,3)}(q)$
has poles of order $6$ at all points $\exp(2i\pi \tau)$, where $\tau$ is an
element of the orbit of $\rho_1=\tfrac {1} {2}+\tfrac {i} {2\sqrt3}$ 
under $\Ga_0(3)$. The point
$\exp(2i\pi\rho_1)=-\exp(-\pi/\sqrt3)$ 
is the element of smallest
modulus in this set, and the other elements become dense near
the boundary $\vert q\vert=1$. 
In particular, the Taylor
expansion of $\mathbf z_{(2,3)}(q)$ has radius of convergence 
$\exp(-\pi/\sqrt 3)=\exp(-\pi\cot(\pi/3))=0.163033\dots$,
and $\mathbf z_{(2,3)}(q)$ has $\vert q\vert=1$ as natural boundary.

{\sc Case $\mathbf N=(2,4)$}. Here, we have
$$
\mathbf F_{(2,4)}(z)=
\sum _{n=0} ^{\infty}\frac {(4n)!} {n!^4}z^n=
{}_{3}F_2
 \!\left [ \begin{matrix} {1/2,1/4,3/4}\\ {
  1, 1}\end{matrix} ; {\displaystyle 256z}\right ]=
{}_{2}F_1
 \!\left [ \begin{matrix} {1/8,3/8}\\ {
   1}\end{matrix} ; {\displaystyle 256z}\right ]^2,
$$
again with Clausen's formula \eqref{eq:Clausen} explaining the identity between the
hypergeometric series.
It is well-known (cf.\ e.g.\ \cite[(23)]{zudram}) that
\begin{equation} \label{eq:z24} 
\mathbf z_{(2,4)}(q)=(Q_2^{3}(q)-R^2_2(q))/(256Q_2^{3}(q)),
\end{equation}
where
$Q_2(q)=(Q(q)+4Q(q^2))/5$ and $R_2(q)=(R(q)+8R(q^2))/9$.
An alternative expression can be found, if one observes that, by the
quadratic transformation formula (see \cite[Ex.~4.(iii), p.~97]{BailAA})
\begin{equation} \label{eq:trans} 
{} _{2} F _{1} \!\left [ \begin{matrix} { a, b}\\ { {\frac1 2} + a + b}\end{matrix} ;
   {\displaystyle z}\right ] =
  {} _{2} F _{1} \!\left [ \begin{matrix} { 2 a, 2 b}\\ { {\frac {1} {2}} + a +
   b}\end{matrix} ; {\displaystyle {\frac {1 - {\sqrt{1 - z}}} 2}}\right ],
\end{equation}
we have
$$
{}_{2}F_1
 \!\left [ \begin{matrix} {1/8,3/8}\\ {
   1}\end{matrix} ; {\displaystyle 256z}\right ]=
  {} _{2} F _{1} \!\left [ \begin{matrix} { 1/4, 3/4}\\ {1
   }\end{matrix} ; {\displaystyle {\frac {1 - {\sqrt{1 - 256z}}} 2}}\right ],
$$
which enables us to identify $\frac {1} {2}\big({ {1 - {\sqrt{1 - 256\mathbf
        z_{(2,4)}(q)}}} }\big)$ with $64\mathbf z_{(4)}(q)$, or,
explicitly, 
\begin{equation} \label{eq:z24/z4} 
\mathbf z_{(2,4)}(q)=\mathbf z_{(4)}(q)-64\mathbf z_{(4)}^2(q).
\end{equation}
By Lemma~\ref{lem:expinf} (with $M=4$, $L=4$, $R=1/2$)
and Lemma~\ref{lem:qz}, we know that $\mathbf z_{(2,4)}(q)$
has a pole of order $4$ at $q=-\exp(-\pi)$. 
We can now either use the expression \eqref{eq:z24} in combination
with arguments as in the case $\mathbf N=(4)$, or \eqref{eq:z24/z4}
and similar arguments, to conclude that
$\mathbf z_{(2,4)}(q)$ is
meromorphic on the unit disk $\vert q\vert<1$ with poles of fourth
order at all points $\exp(2i\pi \tau)$, where $\tau$ is an
element of the orbit of $\de_1=-\tfrac {1} {2}+\tfrac {i} {2}$ 
under $\Ga_0(2)$.
The point $\exp(2i\pi\de_1)=-\exp(-\pi)$ 
is the element of smallest
modulus in this set, and the other elements become dense near
the boundary $\vert q\vert=1$. 
In particular, the Taylor
expansion of $\mathbf z_{(2,4)}(q)$ has radius of convergence 
$\exp(-\pi)=\exp(-\pi\cot(\pi/4))=0.0432139\dots$,
and $\mathbf z_{(2,4)}(q)$ has $\vert q\vert=1$ as natural boundary.

We remark that, as a by-product, we obtain that $Q_2(q)$ has a zero of
order $2$ and $R_2(q)$ has a simple zero at $q=-\exp(-\pi)$.

\smallskip
{\sc Case $\mathbf N=(2,6)$}. Here, we have
$$
\mathbf F_{(2,6)}(z)=
\sum _{n=0} ^{\infty}\frac {(6n)!} {(3n)!\,n!^3}z^n=
{}_{3}F_2
 \!\left [ \begin{matrix} {1/2,1/6,5/6}\\ {
 1,  1}\end{matrix} ; {\displaystyle 1728z}\right ]=
{}_{2}F_1
 \!\left [ \begin{matrix} {1/12,5/12}\\ {
   1}\end{matrix} ; {\displaystyle 1728z}\right ]^2,
$$
again with Clausen's formula \eqref{eq:Clausen} behind the identity between the
hypergeometric series.
It is well-known (cf.\ \cite[Sec.~2]{lianyau2}) 
that $\mathbf z_{(2,6)}(q)=1/(1728J(q))$.
By the considerations in the case where $\mathbf N=(6)$, 
we conclude that $\mathbf z_{(2,6)}(q)$
has a pole of order $3$ at all points
 $q=\exp(2i\pi\tau)$, where $\tau$ runs through the
elements of the orbit of $\rho$ under $SL_2(\mathbb Z)$. 
The point $\exp(2i\pi\rho)=-\exp(-\pi\sqrt3)$ 
is the element of smallest
modulus in this set, and the other elements become dense near
the boundary $\vert q\vert=1$. 
In particular, the Taylor
expansion of $\mathbf z_{(2,6)}(q)$ has radius of convergence 
$\exp(-\pi\sqrt 3)=\exp(-\pi\cot(\pi/6))=0.00433342\dots$,
and $\mathbf z_{(2,6)}(q)$ has $\vert q\vert=1$ as natural boundary.

\smallskip
{\sc Case $\mathbf N=(2,2,2)$}. Here, we have
$$
\mathbf F_{(2,2,2)}(z)=
\sum _{n=0} ^{\infty}\frac {(2n)!^3} {n!^6}z^n=
{}_{3}F_2
 \!\left [ \begin{matrix} {1/2,1/2,1/2}\\ {
 1,  1}\end{matrix} ; {\displaystyle 64z}\right ]=
{}_{2}F_1
 \!\left [ \begin{matrix} {1/4,1/4}\\ {
   1}\end{matrix} ; {\displaystyle 64z}\right ]^2,
$$
again with Clausen's formula \eqref{eq:Clausen} behind the identity between the
hypergeometric series.
By applying \eqref{eq:trans}, we can relate the $_2F_1$-series to the
$_2F_1$-series from the case where $\mathbf N=(2,2)$. Namely, we have
$$
{}_{2}F_1
 \!\left [ \begin{matrix} {1/4,1/4}\\ {
   1}\end{matrix} ; {\displaystyle 64z}\right ]
=
  {} _{2} F _{1} \!\left [ \begin{matrix} {1/2,1/2}\\ { 
   1}\end{matrix} ; {\displaystyle {\frac {1 - {\sqrt{1 - 64z}}} 2}}\right ],
$$
which enables us to identify $\frac {1} {2}\big({ {1 - {\sqrt{1 - 64\mathbf
        z_{(2,2,2)}(q)}}} }\big)$ with $16\mathbf z_{(2,2)}(q)$, or,
explicitly, 
\begin{equation} \label{eq:z222/z22} 
\mathbf z_{(2,2,2)}(q)=\mathbf z_{(2,2)}(q)-16\mathbf z_{(2,2)}^2(q).
\end{equation}
The expression on the right-hand side of \eqref{eq:z222/z22} can be
rewritten in the form
\begin{equation} \label{eq:z222} 
\mathbf z_{(2,2,2)}(q)=q
\prod _{n=1} ^{\infty}\frac {(1-q^{2n})^{24}} {(1-(-q)^n)^{24}}=
-\frac {H^{24}(q^2)} {H^{24}(-q)}.
\end{equation}
(This follows again by arguments as in \cite[Sections~3 and 4]{GarvAA}.)
From \eqref{eq:z222}, it is evident that the radius of convergence of
$\mathbf z_{(2,2,2)}(q)$ is $1\!=\!\exp(-\pi\cot(\pi/2))$, and
that $\mathbf z_{(2,2,2)}(q)$ has $\vert q\vert=1$ as natural boundary.
We remark that this is consistent with Lemma~\ref{lem:expinf}.$(iii)$
and Lemma~\ref{lem:qz}.$(iii)$, which imply that $\mathbf
  z_{(2,2,2)}(q)$ has an essential singularity at
  $q=-\exp(-\pi\cot(\pi/2))=-1$.  

\section{Proof of Proposition~\ref{prop:1}}
\label{sec:prop}

By Theorem~\ref{theo:2}.$(i)$, the radius of convergence of ${\bf
  q}_{\mathbf{N}} (z)$ is $1/C$. Moreover, 
by Theorem~\ref{theo:1}, the Taylor coefficients of ${\bf
  q}_{\mathbf{N}} (z)$ at $0$ are all positive. Hence,  ${\bf
  q}'_{\mathbf{N}} (z)>0$ for all $z$ in the segment $[0,1/C)$.
By the inverse function theorem, ${\bf
  q}_{\mathbf{N}} (z)$ has a compositional inverse in a domain
containing $[0,1/C)$. This implies the first assertion.

The additional assertion on the analytic nature of 
${\bf z}_{\mathbf{N}}(q)$ around ${\bf q}_{\mathbf{N}}(1/C)$
results from Lemma~\ref{lem:exp1/C}.$(ii)$, $(iii)$.
Strictly speaking, the lemma makes no assertion about the angle
(depending on $\ep$) of the slit neighbourhood at $1/C$.
However, it is not very difficult to see that the assertion that there
exists such a slit neighbourhood with an arbitrary $\ep>0$ 
can be derived by an appropriate refinement of the proof of
Lemma~\ref{lem:exp1/C}. 

The argument for the assertion on the analytic nature of 
${\bf z}_{\mathbf{N}}(q)$ around the point\break 
$-\exp\big(-\pi\cot(\pi/M)\big)$ given in the second paragraph after
Conjecture~\ref{conj:5} is
similar. It is based on the singular expansion of ${\bf q}_{\mathbf{N}}(z)$
at $z=\infty$ given in Lemma~\ref{lem:expinf} and its consequence
for the singular expansion of ${\bf z}_{\mathbf{N}}(q)$ at
$-\exp\big(-\pi\cot(\pi/M)\big)$ given in Lemma~\ref{lem:qz}.

\section{What do we need to prove Conjecture~\ref{conj:1}?}
\label{sec:What}

\begin{figure}
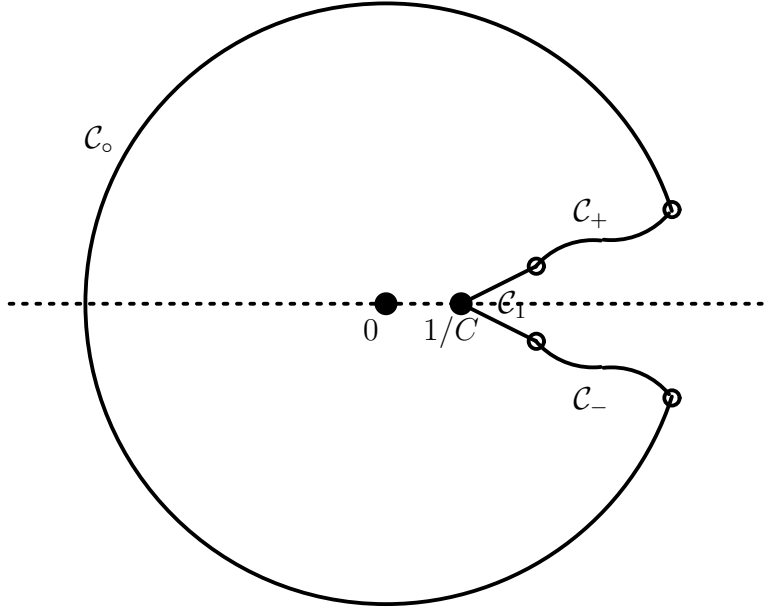

\centertexdraw{
  \drawdim truecm  \linewd.05
\larc r:4 sd:18 ed:342
\DickPunkt(0 0) 
\htext(-.3 -0.5){$0$}
\DickPunkt(1 0) 
\htext(.5 -0.6){$1/C$}
\htext(1.5 -.2){$\mathcal C_1$}
\htext(-4 2){$\mathcal C_\circ$}
\htext(2.5 1){$\mathcal C_+$}
\htext(2.5 -1.5){$\mathcal C_-$}
  \move(1 0)
  \rlvec(1 .5) 
  \move(1 0)
  \rlvec(1 -.5) 
\move(2.75 -.15)
\larc r:1 sd:84 ed:140
\move(3. 1.85)
\larc r:1 sd:264 ed:320
\move(2.75 .15)
\larc r:1 sd:220 ed:276
\move(3. -1.85)
\larc r:1 sd:40 ed:96
\move(2 .5)
\larc r:.1 sd:0 ed:360
\move(2 -.5)
\larc r:.1 sd:0 ed:360
\move(3.8 1.25)
\larc r:.1 sd:0 ed:360
\move(3.8 -1.25)
\larc r:.1 sd:0 ed:360
  \lpatt(.05 .13)
  \move(-5 0)
  \rlvec(10 0) 
\move(0 4)
}
\vskip65pt
\caption{The contour $\mathcal C$}
\label{fig:2}
\end{figure}

By the Lagrange inversion formula (cf.\ \cite[Theorem~1.9b with $R(x)=x$]{HenrAA}), we can
express the $m$-th Taylor coefficient of ${\bf z}_{\mathbf N}(q)$ in the form
\begin{equation} \label{eq:Lagr}
\frac {1} {2i\pi m}\int _{\mathcal D} ^{}\frac {\dd z} {{\bf q_{\mathbf N}}^m(z)}, 
\end{equation}
where $\mathcal D$ is a sufficiently small 
contour that encircles the origin once in
positive (that is, counter-clockwise) direction. 
We now deform $\mathcal D$ to a contour $\mathcal C$ consisting of
four parts,
$$\mathcal C=\mathcal C_1\cup \mathcal C_+\cup
\mathcal C_- \cup \mathcal C_\circ,$$
where 
\begin{enumerate} 
\item $\mathcal C_1$ is a (small) 
piece of length $\ell$, say, that passes through
$1/C$,
\item $\mathcal C_\circ$ is a segment $\{Re^{i\th}:\de\le\th\le2\pi-\de\}$ 
of a (large) circle of radius $R$, for some $\de>0$,
\item $\mathcal C_+$ is a path that connects the ends of $\mathcal
C_1$ and $\mathcal C_\circ$ 
with positive real parts,
\item $\mathcal C_+$ is a path that connects the ends of $\mathcal
C_1$ and $\mathcal C_\circ$ 
with negative real parts,
\item $\vert \mathbf q_{\mathbf N}(z)\vert\ge\mathbf
q_{\mathbf N}(1/C)$ for all $z\in \mathcal C$, 
\end{enumerate}
{\it if there exists such a contour} (see Figure~\ref{fig:2} for a symbolic
illustration of such a contour).
There is no problem with the existence of $\mathcal C_1$ since
Lem\-ma~\ref{lem:exp1/C}.$(ii)$, $(iii)$ says that
the coefficient of $1-Cz$ in the singular expansion of 
$\mathbf q_{\mathbf N}(z)$
about $z=1/C$ must be negative, nor is there with
the existence of $\mathcal C_\circ$, the latter due to
Theorem~\ref{theo:3} and Lemma~\ref{lem:exp>q(1/C)}.
The problem here is the question
of existence of a suitable path $\mathcal C_+$ connecting a point
in the neighbourhood of $\mathbf q_{\mathbf N}(1/C)$ with a point
``far out" (these points being indicated by circles in
Figure~\ref{fig:2}) 
so that, along the path, all values of $\mathbf q_{\mathbf
N}(z)$ are larger in modulus than $\mathbf q_{\mathbf N}(1/C)$. 
There cannot be any doubt that, if $\Phi\ge4$, such a path exists,
but, unfortunately, we have not been able to establish this.
It is easy to see that (by exploiting that ${\bf q}_{\bf
  N}(\overline z)=\overline{{\bf q}_{\bf
  N}(z)}$), once we know that a suitable path $C_+$
exists, there is as well a suitable path $C_-$.

Let us continue {\it under the assumption that paths $C_+$ and
$C_-$} as described above {\it exist}. 
Then we may estimate
$$
\left\vert
\frac {1} {2i\pi m}\int _{\mathcal D} ^{}\frac {\dd z} {{\bf q_{\mathbf
N}}^m(z)}
\right\vert=
\left\vert
\frac {1} {2i\pi m}\int _{\mathcal C} ^{}\frac {\dd z} {{\bf q_{\mathbf
N}}^m(z)}
\right\vert
\le 
\frac {b} {m\cdot{\bf q_{\mathbf
N}}^m(1/C)},
$$
where $b$ is some constant. Since the contour integral \eqref{eq:Lagr}
gives the coefficient of $q^m$ in the Taylor expansion of 
$\mathbf z_{\mathbf N}(q)$, this implies that the radius
of convergence of the series
$\mathbf z_{\mathbf N}(q)$ must be at least ${\bf q_{\mathbf
N}}(1/C)$. From Theorem~\ref{theo:4}.$(ii)$ it then follows that
it must be {\it exactly} ${\bf q_{\mathbf N}}(1/C)$. 

\medskip
Whether or not ${\bf q_{\mathbf N}}(1/C)$ is the only point of
singularity of ${\bf z_{\mathbf N}}(q)$ on the boundary of its disk
of convergence would have to be decided by an additional argument.

\section{Conjecture \ref{conj:2} implies the first assertion in 
Conjecture \ref{conj:1}}
\label{sec:conj}

Let $\Phi\ge4$. Conjecture~\ref{conj:2} says that all but a finite
number of coefficients in the Taylor series expansion of ${\bf z}_{\bf
  N}(q)$ are negative. Let this Taylor series expansion be
$\sum _{m=0} ^{\infty}{\sf z}_mq^m$.
A well-known theorem of Pringsheim (cf.\
\cite[Theorem~IV.6]{FlSeAA}) 
says that, if a function $f(z)$ can be represented in a neighbourhood
of $z=0$ as a power series with non-negative coefficients with radius
of convergence $R$, then $f(z)$ has a singularity at $z=R$. If we
apply this theorem to $-\sum _{m=4} ^{\infty}{\sf z}_mq^m$, 
then we conclude that the radius of convergence of
${\bf z}_{\bf  N}(q)$ is a positive real number which is at the same
time a singularity of ${\bf z}_{\bf  N}(q)$. 
By Proposition~\ref{prop:1}, we know that 
${\bf z}_{\bf  N}(q)$ is analytic at
all points $q\in [0,{\bf q}_{\bf N}(1/C))$. Thus, we know that the
radius of convergence of ${\bf z}_{\bf  N}(q)$ must be at least
${\bf q}_{\bf N}(1/C)$. On the other hand, by Theorem~\ref{theo:4} we
know that it cannot be larger, whence the conclusion.
\hfill\qed

\section{Conjecture \ref{conj:1} implies a weak version of
Conjecture \ref{conj:2}.$(ii)$ and $(iii)$}
\label{sec:conj1weak}

Here we prove that Conjecture~\ref{conj:1} implies that, if
$\Phi_{\mathbf N}\ge4$, almost all coefficients in the Taylor
expansion of ${\bf z}_{\bf N}(q)$ at $q=0$ are negative.

\begin{prop} \label{prop:conj1weak}
Let $N_1,N_2, \ldots, N_k$ be positive integers, all at least $2$, such that
$\Phi_{\mathbf{N}}\ge4$, and assume that Conjecture~{\em\ref{conj:1}}
holds. Then there exists an $A_{\mathbf N}>0$ such that 
the coefficient of $q^m$ in
the Taylor series of ${\bf z}_{\bf N}(q)$ is negative for every
$m>A_{\mathbf N}$.
\end{prop}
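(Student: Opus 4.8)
The plan is to combine Conjecture~\ref{conj:1} with singularity analysis à la Flajolet--Sedgewick \cite{FlSeAA}. By Conjecture~\ref{conj:1}, the point $\rho_0:={\bf q}_{\bf N}(1/C)$ is the unique dominant singularity of ${\bf z}_{\bf N}(q)$ on the boundary of its disk of convergence, so that one obtains an asymptotic expansion of the Taylor coefficients ${\sf z}_m$ of ${\bf z}_{\bf N}(q)$ governed entirely by the local behaviour of ${\bf z}_{\bf N}(q)$ at $q=\rho_0$. The crucial point is that this local behaviour is known explicitly: combining Lemma~\ref{lem:exp1/C}.$(ii)$, $(iii)$ (the singular expansion of ${\bf q}_{\bf N}(z)$ at $z=1/C$) with Lemma~\ref{lem:qz2}, respectively Lemma~\ref{lem:qz3} (the transfer to the compositional inverse), gives the singular expansion of ${\bf z}_{\bf N}(q)$ at $q=\rho_0$, with the sign of the leading singular coefficient pinned down. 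For $\Phi\ge5$ odd, by Lemma~\ref{lem:exp1/C}.$(ii)$ the expansion of ${\bf q}_{\bf N}(z)$ has a term ${\sf q}_{d+1/2}(1-Cz)^{d+1/2}$ with $(-1)^{d+1}{\sf q}_{d+1/2}>0$ and $d=(\Phi-3)/2$, and Lemma~\ref{lem:qz2} transfers this to a term $z_{d+1/2}(\rho_0-q)^{d+1/2}$ in ${\bf z}_{\bf N}(q)$ with $z_{d+1/2}=-q_1^{-d-3/2}{\sf q}_{d+1/2}$; since ${\sf q}_1<0$ one computes the sign of $z_{d+1/2}$ and finds it is such that the corresponding coefficient contribution is negative for large $m$. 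The even case $\Phi\ge4$ is handled identically using the logarithmic term in Lemma~\ref{lem:exp1/C}.$(iii)$ and Lemma~\ref{lem:qz3}.

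In detail, I would proceed as follows. First, invoke Conjecture~\ref{conj:1} to conclude that $\rho_0={\bf q}_{\bf N}(1/C)$ is the radius of convergence of $\sum_m {\sf z}_m q^m$ and the only singularity on the circle $|q|=\rho_0$. Second, verify that ${\bf z}_{\bf N}(q)$ is $\Delta$-analytic at $\rho_0$ in the sense of \cite[Ch.~VI]{FlSeAA}: Proposition~\ref{prop:1} already provides analytic continuation to a domain containing $[0,\rho_0)$ together with a right slit neighbourhood of $\rho_0$ with arbitrarily small slit angle, which is exactly the shape required. Third, read off from Lemma~\ref{lem:exp1/C} and Lemmas~\ref{lem:qz2}--\ref{lem:qz3} the dominant singular term of ${\bf z}_{\bf N}(q)$ at $\rho_0$: in the odd case it is a constant times $(\rho_0-q)^{(\Phi-2)/2}$ (a non-integer power since $\Phi$ is odd), and in the even case a constant times $(\rho_0-q)^{(\Phi-2)/2}\log(\rho_0-q)$; in both cases compute the sign of this constant from the sign information in Lemma~\ref{lem:exp1/C} (namely ${\sf q}_1<0$ and the sign of ${\sf q}_{d+1/2}$, resp.\ ${\sf q}_{d+}$) via the explicit transfer formulas $z_1=1/q_1$, $z_{d+1/2}=-q_1^{-d-3/2}q_{d+1/2}$, resp.\ $z_{d+}=-q_1^{-d-1}q_{d+}$. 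Fourth, apply the standard transfer theorem: a singular term $c(\rho_0-q)^{\alpha}$ with $\alpha\notin\mathbb Z_{\ge0}$ contributes $c\,\rho_0^{-m}m^{-\alpha-1}/\Gamma(-\alpha)\,(1+o(1))$ to ${\sf z}_m$, and $c(\rho_0-q)^{\alpha}\log(\rho_0-q)$ contributes a similar quantity with an extra sign and the same $\rho_0^{-m}m^{-\alpha-1}$ decay; in each case the sign of the coefficient for large $m$ is the sign of this leading contribution, which by the previous step is negative. Finally, this shows ${\sf z}_m<0$ for all $m$ exceeding some threshold $A_{\mathbf N}$, which is the claim.

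The main obstacle is purely bookkeeping of signs through the chain of transfers. The $\Gamma(-\alpha)$ factor in the transfer theorem can be positive or negative depending on $\alpha=(\Phi-2)/2$, so one must carefully track whether $\lfloor\alpha\rfloor$ is even or odd, and this interacts with the sign $(-1)^{d+1}$ appearing in Lemma~\ref{lem:exp1/C} and with the power of $q_1<0$ in the transfer formula of Lemma~\ref{lem:qz2}; one should check that all these factors conspire to give a net negative sign uniformly in $\Phi\ge4$ (rather than, say, alternating with the parity of $\Phi$). I would organize this by writing $\Phi-2=2d$ or $2d+1$ and expressing the coefficient contribution as an explicit product of three signed factors, then simplifying. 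A minor secondary point is that in the even case the logarithmic transfer produces a coefficient asymptotic of the shape $c\,\rho_0^{-m}m^{-\Phi/2}\log m\,(1+o(1))$ (times a constant of definite sign), and one must be sure the $o(1)$ does not hide a competing term of the same order; this is guaranteed because, by Conjecture~\ref{conj:1}, there is no other singularity on the circle of convergence, so the only other contributions come from subdominant terms in the single local expansion at $\rho_0$, all of which decay faster by at least a fixed power of $m$.
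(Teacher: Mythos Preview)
Your proposal is correct and follows essentially the same route as the paper: Lemma~\ref{lem:exp1/C}.$(ii)$,$(iii)$ for the singular expansion of ${\bf q}_{\bf N}$ at $1/C$, Lemmas~\ref{lem:qz2}--\ref{lem:qz3} to transfer this to ${\bf z}_{\bf N}$ at $\rho_0={\bf q}_{\bf N}(1/C)$, Conjecture~\ref{conj:1} to ensure $\rho_0$ is the unique dominant singularity, and then singularity analysis to read off the sign of ${\sf z}_m$ for large $m$. The paper compresses your sign bookkeeping into the single assertion $Z_d>0$ in \eqref{eq:zsing}, but the underlying computation is exactly what you outline.

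One small slip: in the even case the dominant singular term is $(\rho_0-q)^d\log(\rho_0-q)$ with $d=(\Phi-2)/2$ a \emph{positive integer}, and for integer $d$ the transfer of $(1-x)^d\log(1-x)$ gives coefficients of order $m^{-d-1}=m^{-\Phi/2}$ \emph{without} an extra $\log m$ factor (the $\Gamma(-\alpha)$ in the standard formula has a pole at $\alpha=-d$, killing the would-be leading term). This matches the paper's asymptotic \eqref{eq:zasymp} and does not affect your sign conclusion.
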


\begin{proof}
By Lemma~\ref{lem:exp1/C}.$(ii)$, $(iii)$, we know that $\mathbf
q_{\mathbf N}(z)$ admits a singular expansion at $z=1/C$ of the form
\begin{multline*} 
\mathbf q_{\mathbf
  N}(z)=\mathbf q_{\mathbf
  N}(1/C)+{\sf q}_1(1-Cz)+{\sf q}_2(1-Cz)^2+\cdots\\
+{\sf q}_{d-1}(1-Cz)^{d-1}+
Q_{d}(1-Cz)^{d}\widetilde L(z)
+\mathcal O\big((1-Cz)^{d}\big),
\end{multline*}
where $d=\left\lfloor\frac {\Phi-2} {2}\right\rfloor$, 
${\sf q}_1<0$, and $(-1)^{d+1}Q_{d}>0$, where 
$$\widetilde L(z)=\begin{cases} 
(1-Cz)^{\frac {1} {2}}&\text{if $\Phi$
  is odd},\\
-\log(1-Cz)&\text{if $\Phi$
  is even}.
\end{cases}$$
By Lemma~\ref{lem:qz2}, respectively Lemma~\ref{lem:qz3}, it follows
that $\mathbf
z_{\mathbf N}(q)$ admits a singular expansion at $q=\mathbf
q_{\mathbf N}(1/C)$ of the form
\begin{multline} \label{eq:zsing} 
\mathbf z_{\mathbf
  N}(q)=\frac {1} {C}
  +{\sf z}_1\big(q-\mathbf
q_{\mathbf N}(1/C)\big)+{\sf z}_2\big(q-\mathbf
q_{\mathbf N}(1/C)\big)^2+\cdots\\
+{\sf z}_{d-1}\big(q-\mathbf
q_{\mathbf N}(1/C)\big)^{d-1}+
Z_{d}\big(q-\mathbf
q_{\mathbf N}(1/C)\big)^{d}\widehat L(q)
+\mathcal O\big(\big(q-\mathbf
q_{\mathbf N}(1/C)\big)^{d+1}\big),
\end{multline}
where $d=\left\lfloor\frac {\Phi-2} {2}\right\rfloor$, 
${\sf z}_1=-1/(C{\sf q}_1)$, and $Z_{d}>0$, where 
$$\widehat L(z)=\begin{cases} 
\big(\mathbf
q_{\mathbf N}(1/C)-q\big)^{\frac {1} {2}}&\text{if $\Phi$
  is odd},\\
-\log\big(\mathbf
q_{\mathbf N}(1/C)-q\big)&\text{if $\Phi$
  is even}.
\end{cases}$$

If we now use Conjecture~\ref{conj:1}, then
the standard theorems of singularity analysis (see
\cite[Ch.~VI]{FlSeAA}) imply that the term in \eqref{eq:zsing}
containing $\widehat L(q)$ contributes the main term to the
asymptotics of the Taylor coefficients of $\mathbf z_{\mathbf
  N}(q)$. More precisely, for $m\to\infty$, the $m$-th Taylor
coefficient of $\mathbf z_{\mathbf
  N}(q)$ is equal to
\begin{equation} \label{eq:zasymp}
-const.\frac {1+o(1)} {\mathbf
q^m_{\mathbf N}(1/C)\,m^{\frac {\Phi} {2}}}, 
\end{equation}
where $const.$ is a positive constant which can be computed explicitly
in terms of $Z_d$, $\Phi$, and $m$. The claim of the proposition is
now obvious. 
\end{proof}

\section*{Acknowledgements}
The authors thank Bruce Berndt and Heng-Huat Chan for very
helpful correspondence concerning the discussion of modular mirror
maps presented here in Section~\ref{sec:Phi}.

\def\refname{Bibliography}

\end{document}